\date{\today}
\numberwithin{equation}{section}
\newtheorem{theorem}{Theorem}[section]
\newtheorem{proposition}{Proposition}[section]
\newtheorem{lemma}{Lemma}[section]
\theoremstyle{definition}
\newtheorem{remark}{Remark}[section]
\DeclareMathOperator{\supp}{supp}
\DeclareMathOperator{\sign}{sign}
\newcommand{\eps}{\varepsilon}
\newcommand{\R}{\mathbf{R}}
\newcommand{\Id}{\textrm{\rm Id}}
\renewcommand{\r}[1]{(\ref{#1})}
\newcommand{\be}[1]{\begin{equation}\label{#1}}
\newcommand{\ee}{\end{equation}}
\renewcommand{\sc}{semi\-classical}
\newcommand{\p}{{\partial}}
\renewcommand{\d}{\mathrm{d}}
\renewcommand{\i}{\mathrm{i}}
\newcommand{\bo}{\partial \Omega}
\renewcommand{\a}{\mathsf{a}}
\title[Recovery of a cubic non-linearity]{Recovery of a cubic non-linearity in the wave equation in the weakly non-linear regime}
\author[A. S\'a Barreto]{Ant\^ onio S\'a Barreto}
\author[P. Stefanov]{Plamen Stefanov}
\address{Department of Mathematics, Purdue University, West Lafayette, IN 47907}
\thanks{The first author is partly supported by the Simons Foundation grants \#349507 and  \#848410. 
%, Ant\^onio S\'a Barreto. 
The second author is partly supported by the  NSF Grant DMS-1900475.}
\begin{document}
 \begin{abstract}
We study the inverse problem of recovery a compactly supported non-linearity in the semilinear wave equation $u_{tt}-\Delta u+ \alpha(x) |u|^2u=0$, in two and three dimensions. We probe the medium with complex-valued harmonic waves of wavelength $h$ and amplitude $h^{-1/2}$,  then they propagate in the weakly non-linear regime; and measure the transmitted wave when it exits $\supp\alpha$. We show that one can extract the Radon transform of $\alpha$ from the phase shift of such waves, and then one can recover $\alpha$. We also show that one can probe the medium with real-valued  harmonic waves and obtain uniqueness for the linearized problem. 
\end{abstract} 
\maketitle

\section{Introduction}  
Consider the semilinear wave equation
 \be{1}
u_{tt}-\Delta u+ \alpha(x) |u|^2u=0,\quad (t,x)\in \R_t\times\R_x^n,
\ee
where $0\le \alpha\in  C_0^\infty$, which corresponds to the ``defocusing" case. 
%I have seen it written this way rather than with the $u^3$ non-linearity. One of the technical advantages here is that the non-linearity ``commutes'' with $e^{\i \phi}$, $\phi$ real. 
There is energy preserved under the dynamics
\be{1a}
E(\mathbf{u}(t)):=\int\left(\frac12 |u_t|^2+\frac12|\p_x u|^2  +\frac14\alpha(x)|u|^4    \right) \d x= \text{const.},
\ee
where $\mathbf{u}=(u,u_t)$. Assume that $\supp\alpha\subset B(0,R)$, where $B(0,R)=\{x\in \R^n;\; |x|<R\}$, $n\ge2$. We send high-frequency waves from the exterior of $B(0,R)$, wait until their high frequency part exits $B(0,R)$, and measure them. We make this more precise below. The problem we study is if we can recover $\alpha$ given that information, and how to do it.

%[history of the problem here]
It is known that one can recover such a non-linearity and even more general ones, see, e.g., \cite{LassasUW_2016, KLU-18, OSSU-principal}. Since this is an inverse problem for a non-linear PDE, one would expect that the information about $\alpha$ would be encoded in the data in many ways. We refer to section~\ref{sec_comparison} for a comparison with the existing approaches. The standard approach is to collide several small waves moving in the linear regime, which meet at a point in time-space and produce and even smaller signal (among the rest) with a weak singularity which can be used to recover the non-linearity at the collision point. 
The novelty of this work is that we want to use waves (solutions) that are not too small and do not have very weak singularities. In fact, the solutions we use will have amplitudes and  energies increasing with the frequency. We chose the balance between the amplitude and frequency so that the waves propagate in the weakly non-linear regime. As we see below, for \r{1} this means an amplitude $\sim h^{-1/2}$ when the frequency is $\sim h^{-1}$, $0<h\ll1$. Such large amplitude solutions have better chance to be measured reliably in presence of background noise, and the non-linearity starts to affect the propagation of the waves. 
 Moreover, we show that $\alpha$ can be reliably recovered from the principal part of the outgoing signal, having an amplitude of the same order $\sim h^{-1/2}$ rather than from lower order terms. 
The mathematical theory of weakly non-linear waves for semi- and quasilinear first order hyperbolic systems has been developed in \cite{Metivier-Notes,Metivier-Joly-Rauch, Joly-Rauch_just, Donnat-Rauch_dispersive, Dumas_Nonlinear-Geom-Optics, JMR-95},  and is known in the physics literature as well, see, e.g., \cite{DajaniII1990Gbpi, dajani1993weakly}. Solvability of semilinear wave equations has been studied in \cite{Heinz-Wahl1975, Brenner79, Segal63, Reed-Simon2, dodson2018global,  Jorgens1961,  Ebihara72} and other works. In particular, \r{1} has a global solution with initial conditions with finite local energy at least in dimensions $n=2,3$, continuously depending on the initial data and sources, see Appendix~\ref{sec_gl}, and has a local one in all dimensions regardless of the sign of $\alpha$. 

To make our setup more precise, let $0\le \chi\in C_0^\infty$ be such that  $\supp\chi\subset (-\delta,\delta)$ with some $\delta>0$. 
We probe the medium with the incoming wave 
\be{10a}
u_\textrm{in}= e^{\i (-t+x\cdot\omega)/h}h^{-1/2}\chi(-t+x\cdot\omega), \quad \omega\in S^{n-1}. 
\ee
When $t<-R-\delta$, the wave is outside $B(0,R)\supset\supp \alpha$, solves  the linear wave equation and is moving in the direction $\omega$. We solve \r{1} with initial condition
\be{1c}
u=u_\text{in}\quad \text{for $t<-R-\delta$}.
\ee
We measure 
\be{2a}
\Lambda(u_\text{in}) = u|_{t=T, \; |x\cdot\omega-T|\le\delta},
\ee
where $T>R+\delta$ is fixed. A visual representation of $u_\text{in}$ and $\Lambda(u_\text{in})$ can be found in Figures~\ref{fig_K1}, \ref{fig_K5} and \ref{fig_K5_h005}, left and center, respectively. 
If $\alpha=0$, we would have $\Lambda(u_\text{in})= e^{\i (-T+x\cdot\omega)/h}h^{-1/2}\chi(-T+x\cdot\omega)$, and the information about $\alpha$ in the general case would be encoded in the deviation of $\Lambda(u_\text{in})$ from that. We do that for every unit $\omega$.  
It turns out that the non-linearity in the weakly non-linear regime does not change the geometry of the propagation of the \sc\ wave front set (with this particular incoming wave), i.e., it  propagates ``in a linear way'' as a set, but changes the leading order amplitude denoted by $a_0$ below. The modulus $|a_0|$ remains unaffected by the non-linearity but its argument depends on $\alpha$ through its X-ray transform $X\alpha$ of $\alpha$ along lines parallel to $\omega$. The non-linearity creates a phase shift of lower order compared to $(-t+x\cdot\omega)/h$ (no $h$ in it).  We can extract $X\alpha$ from the data in an explicit way, and then we can recover $\alpha$. We present numerical simulations as well. 

Note that the form of the PDE \r{1} and the choice of the probing waves \r{1c} prevent formation of harmonics which makes a parametrix of the type \r{2} below possible. In general, one would expect solutions of the kind $u = U(\phi/h,t, x,h)$, see also section~\ref{sec_real}. The propagating wave interacts with itself though, at principal amplitude level, which explains the phase shift. 

Our main result is the following. 

\begin{theorem}\label{thm_main}
Let $n=2$ or $n=3$,  let $u_{\textrm{\rm in}}$ be as in \r{10a}, and let $\alpha\in C_0^\infty(\R^n;\;\R)$.  Then 
\be{eq:main_thm}
h^{1/2}e^{\i(T-x\cdot\omega)/h} \Lambda(u_\textrm{\rm in}   ) = \chi(-T+x\cdot \omega)\exp\Big(-\frac{\i}2 \chi^2(-T+x\cdot \omega).X\alpha\Big) + O(h),
\ee
holds in the uniform norm for $(x,\omega)$, where $X\alpha=X\alpha(x,\omega) = \int \alpha(x+s\omega)\, \d s$ %,  $z=x- (x\cdot \theta)\theta\in\omega^\perp$ 
is the X-ray transform of $\alpha$. 
\end{theorem}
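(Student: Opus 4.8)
The plan is to build a WKB-type parametrix adapted to the incoming plane wave and to reduce the theorem to a hierarchy of transport equations, the leading one of which, remarkably, linearizes. Since $u_{\mathrm{in}}$ actually solves the free wave equation exactly (its phase $\phi:=-t+x\cdot\omega$ is characteristic, $(\p_t\phi)^2-|\nabla_x\phi|^2=0$, and its amplitude is constant along the flow of $\p_t+\omega\cdot\nabla_x$), one looks for the solution of \r{1}, \r{1c} in the form $u=h^{-1/2}e^{\i\phi/h}a(t,x,h)$ with $a\sim a_0+ha_1+\cdots$ and initial value $a=\chi(\phi)$ for $t<-R-\delta$. The crucial algebraic point is that $|u|^2u=\bar u\,u\,u$ carries the \emph{single} phase $e^{\i\phi/h}$ — this is the ``no harmonics'' feature of the problem — so that the ansatz is self-consistent. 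Plugging in and dividing by $h^{-3/2}e^{\i\phi/h}$ (the most singular, $O(h^{-5/2})$, term cancels by the eikonal equation) gives $-2\i(\p_t+\omega\cdot\nabla_x)a+\alpha(x)|a|^2a+h(\p_t^2-\Delta)a=0$, whose leading order in $h$ is the principal transport equation
\[
(\p_t+\omega\cdot\nabla_x)a_0=-\tfrac{\i}{2}\,\alpha(x)\,|a_0|^2a_0 , \qquad a_0=\chi(\phi)\ \text{ for }t<-R-\delta .
\]

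The key observation is that along the characteristics $s\mapsto(s,y+s\omega)$ of the transport field one has $\frac{d}{ds}|a_0|^2=2\,\mathrm{Re}\big(\bar a_0(\p_t+\omega\cdot\nabla_x)a_0\big)=-\mathrm{Re}\big(\i\,\alpha\,|a_0|^4\big)=0$; hence $|a_0|^2\equiv\chi^2(\phi)$, which is constant along each characteristic since $\phi=y\cdot\omega$ there. (In particular $|a_0|$ is bounded, so the nonlinear ODE for $a_0$ is globally solvable.) The principal transport equation thus becomes the \emph{linear} ODE $\frac{d}{ds}a_0=-\frac{\i}{2}\chi^2(\phi)\,\alpha(y+s\omega)\,a_0$ along each characteristic, with solution $a_0=\chi(\phi)\exp\!\big(-\frac{\i}{2}\chi^2(\phi)\int_{-\infty}^{s}\alpha(y+\tau\omega)\,\d\tau\big)$. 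Evaluating at $t=T$, $x=y+T\omega$: because $T>R+\delta$ while $|\phi|\le\delta$ on the measurement set, the line $\tau\mapsto y+\tau\omega$ has already left $\supp\alpha$ by $s=T$, so the integral equals $X\alpha(y,\omega)=X\alpha(x,\omega)$ ($X\alpha$ is invariant under translation of its base point along $\omega$), while $\phi=-T+x\cdot\omega$; this produces exactly the right-hand side of \r{eq:main_thm}.

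One then solves the lower-order transport equations: the $h^1$ term yields a linear inhomogeneous transport equation for $a_1$ (coefficients built from $\alpha$ and $a_0$, a conjugate-linear $\bar a_1$ term forcing one to work with real and imaginary parts, source $(\p_t^2-\Delta)a_0$, and zero Cauchy data), and similarly for $a_2,\dots$; all the $a_j$ are smooth and bounded together with their derivatives on $[-R-\delta,T]$. Truncating at $u_N:=h^{-1/2}e^{\i\phi/h}(a_0+\cdots+h^N a_N)$ produces an approximate solution with $u_N=u_{\mathrm{in}}$ for $t<-R-\delta$ and $(\p_t^2-\Delta)u_N+\alpha|u_N|^2u_N=O(h^{N-1/2})$ in every $C^k$ norm, uniformly in $(x,\omega)$.

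The main obstacle is the error estimate: one must show $w:=u-u_N=O(h^{1/2})$ in, say, $C^0$, which combined with the above proves \r{eq:main_thm} with the stated $O(h)$ remainder. A naive energy estimate is hopeless, since $w$ solves a wave equation with effective potential $\alpha\,(|u|^2u-|u_N|^2u_N)/w$ of size $\sim h^{-1}$, producing a useless $e^{C/h}$ Gronwall factor. Instead one substitutes $w=h^{-1/2}e^{\i\phi/h}v$: after removing $e^{\i\phi/h}$ the transport part of the resulting amplitude equation has \emph{bounded} coefficients, and the only singular term, $h(\p_t^2-\Delta)v$, is $O(h)$-small up to a loss of two derivatives. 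Recasting this as a singularly perturbed first-order system in $t$ whose singular part $h^{-1}\bigl(\begin{smallmatrix}0&1\\0&2\i\end{smallmatrix}\bigr)$ has purely imaginary spectrum — hence generates a uniformly bounded, merely oscillatory flow — and combining Duhamel with the $O(h^{N+1})$ source and a Gronwall argument in a fixed high Sobolev norm (absorbing the $h$-small second-order term) yields $v=O(h^{N+1})$, uniformly in $(x,\omega)$. This is the type of estimate supplied by the weakly non-linear geometric optics theory and by the global well-posedness and stability result of Appendix~\ref{sec_gl}, which I would invoke to conclude.
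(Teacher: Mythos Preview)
Your parametrix construction matches the paper's Sections~3.1--3.2 line by line: the single-phase ansatz (exploiting that $|u|^2u$ carries the same phase $e^{\i\phi/h}$), the principal transport equation, the conservation of $|a_0|^2$ along rays reducing it to a linear scalar ODE with explicit solution, and the hierarchy of linear inhomogeneous transport equations for $a_1,a_2,\dots$ producing $u_N$ with $\Box u_N+\alpha|u_N|^2u_N=O(h^{N-1/2})$.

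The difference is in the error estimate. The paper dispatches it in two sentences: apply Theorem~\ref{thm_stab2} with $u^\sharp=u_N$, $\mathbf f=\mathbf f^\sharp$, $\|R_N\|_{H^1}=O(h^{N-3/2})$, and then the Sobolev embedding of Remark~\ref{rem_final}, concluding $\|u-u_N\|_{L^\infty}=O(h^{1/2})$ for $N\ge2$. You instead correctly flag that a direct Gronwall on $w=u-u_N$ faces an effective coupling $\alpha|u|^2\sim h^{-1}$, and you outline the profile-equation remedy from weakly nonlinear optics: conjugate out $e^{\i\phi/h}$ so that the amplitude equation for $v=h^{1/2}e^{-\i\phi/h}w$ has \emph{bounded} nonlinear coefficients plus an $O(h)$ dispersive term, recast as a first-order system whose singular generator $h^{-1}\bigl(\begin{smallmatrix}0&1\\0&2\i\end{smallmatrix}\bigr)$ has purely imaginary spectrum (eigenvalues $0$ and $2\i$) and hence uniformly bounded propagator, then close by Duhamel and Gronwall in a high Sobolev norm. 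This is the Joly--Rauch justification mechanism and is what actually delivers $h$-uniform constants. One caveat on your closing sentence: Theorem~\ref{thm_stab2} as stated carries a Gronwall constant depending on $\|\mathbf f\|_{\mathcal H}$, which here is $\sim h^{-3/2}$, so the appendix alone does not literally supply the uniform bound---it is the profile argument you sketch (equivalently, the cited geometric-optics literature) that is really doing the work, not the stability theorem per se.
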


This recovers $X\alpha$; we give more details in section~\ref{sec_ps} without formulating the analysis there as a theorem. Knowing $X\alpha$, we can recover $\alpha$ as well.

We also study the same problem with a real incoming wave
\[
u_\textrm{in}= \cos((-t+x\cdot\omega)/h)h^{-1/2}\chi(-t+x\cdot\omega), \quad \omega\in S^{n-1}.
\]
Then the solution remains real, and we can write the non-linearity as $\alpha u^3$. In this case, harmonics do develop, and aside from the leading frequency $1/h$, we also get $k/h$ with $k$ odd. The transport equations do not seem to be solvable in an explicit way but they are still solvable, see Appendix~\ref{sec_transp}. In section~\ref{sec_real}, we show that for $\alpha\ll1$, one can linearize the data and solve the linearized problem to recover $\alpha$ approximately. 

\textbf{Acknowledgments.}  The authors want to thank Jason Murphy for pointing out that certain integrals had incorrect limits in the first version; and the anonymous referees for their helpful suggestions. 

\section{Summary of the three regimes in non-linear geometrical optics} \label{sec_2}

We are looking for an asymptotic solution of \r{1} of the form
\be{2}
u=e^{\i\phi(t,x,\omega)/h}h^p a(t,x,\omega,h)  %e^{\i(-t+x\cdot\omega)/h}
\ee
with $0\le h\ll1$, $|\omega|=1$, and $a$ having some asymptotic expansion in powers of $h$, not necessarily integer ones a priori. We normalize the expansion by requiring $a=a_0(t,x)+\dots$ with $a_0$ independent of $h$. More generally, one can consider $u$ of the kind 
\be{2aa}
u= e^{\i \phi/h}\eps a, \quad a\sim 1,
\ee
and work in regimes $h^{k_1}/C\le \eps\le C  h^{k_2}$.

Plug \r{2} into \r{1} to get
\be{3}
\begin{split}
e^{-\i\phi /h}P(e^{\i\phi /h}h^p a) &= h^pe^{-\i\phi /h}\Box (e^{\i\phi /h} a) +  h^{3p}\alpha(x)|a|^2 a \\
&= h^{p-2}(-\phi_t^2+|\p_x\phi|^2)a+ 2\i h^{p-1}(\phi_t, -\p_x\phi)\cdot(\p_t, \p_x)a\\ &\quad +\i h^{p-1} (\Box\phi )a + h^p\Box a+  h^{3p}\alpha(x) |a|^2a\\&=0,
\end{split}
\ee
where $P(u)$ is the non-linear operator on the l.h.s.\ of \r{1}.

 One studies solutions with various $p$, and depending on $p$, one has different asymptotics, see \cite{Metivier-Notes}. 
 Three asymptotic regimes are distinguished there.  

\subsection{The linear regime} \label{sec_l}
We want the phase $\phi$ to be unaffected by the non-linearity and we want the principal part of the amplitude to be the same as in the linear case. For the first requirement, we need $p-2<3p$, and for the second one: $p-1<3p$. In other words, $p>-1/2$. If we want an expansion of $a$ in integer powers of $h$ (which provides a good estimate of the error, i.e., for the non-linear part), we need $p\ge0$. %So we take $p=0$. 
This corresponds to $\eps=1$ in \r{2aa}. 

Assume $p>-1/2$ below. 
Then we get the eikonal equation
\be{5}
\phi_t^2-|\p_x\phi|^2=0,
\ee
and the first transport equation is the same as in the linear case:
\be{6}
Ta_0=0, \quad T:= 2 (\phi_t, -\p_x\phi)\cdot (\p_t, \p_x) + \Box\phi. 
\ee
The easiest to understand case is $p=0$; then the non-linear effect is of order $h$. Note that when $p=0$, the solution is not small; the amplitude is $\sim 1$. When $p\in (-1/2,0]$, the solution increases when $h\to0$ (in the $L^2$ norm).   

This case is somewhat similar to the propagation of conormal singularities for one progressing wave \cite{Bony1}, which states that if the initial data is conormal to a characteristic hypersurface $\Sigma$,   and it is smooth enough, then the solution of \r{1} remains conormal to $\Sigma$ and its principal symbol satisfies \r{6}.  In this case, the subprincipal symbol of the solution involves the $X$-ray transform of $\alpha(x) u|u|^2$,  which is not enough to determine $\alpha(x)$. 

\subsection{The weakly non-linear regime} \label{sec_wnl}
This happens when the eikonal equation stays the same but the first transport equation involves the non-linearity. To have this, we need $p-1=3p$, see \r{3}, i.e., $p=-1/2$. Then $u\sim h^{-1/2}\to\infty$. The first transport equation then takes the form
\be{7}
Ta_0-\i \alpha(x)|a_0|^2a_0=0 .
\ee
We analyze this case further in next section.

\subsection{The fully non-linear regime} Now we want $p$ to be such that the eikonal equation gets modified. Then $p-2=3p$, i.e., $p=-1$.  The ansatz described in \cite{Metivier-Notes} is to look for a solution of the form
\[
u = h^{-1}U(t,x,\phi/h), \quad U(t,x,\theta)\sim U_0+hU_1+\dots
\]
with $U_k$ periodic in $\theta$. Set $\sigma^2=\phi_t^2-|\p_x\phi|^2$; then
\be{13}
\begin{split}
\sigma^2\p_\theta^2U_0+ |U_0|^2U_0&=0,\\
\sigma^2\p_\theta^2U_1 +   \left(2\alpha U_0^2\bar U_1+ 2 \alpha|U_0|^2 U_1\right) + T\p_\theta U_0 &=0,
\end{split}
\ee
etc. 

If we view $\sigma$ as an independent variable in the ``eikonal equation'', we seek $U_0$ of the form $U_0 = \sigma G$; then $G=e^{\i\theta}$ works (also, the conjugate does). One then guesses that the first one is enough, and one gets
\[
U_0= C(t,x) \sigma e^{\i \phi/h}. 
\]
We still have to determine $\phi$, and therefore, $\sigma$.  The phase $\phi$ now solves a quasi-linear hyperbolic PDE of order two instead of the eikonal equation. More details can be found in   \cite[p.~85]{Metivier-Notes} and the references there. We are not going to consider this regime in the present work.

\section{Recovery of the non-linearity through phase shifts} \label{sec_ps}
\subsection{Analysis of the principal term} 
We analyze the weakly non-linear regime in more detail now and in particular, we prove Theorem~\ref{thm_main}.  
Let us take a linear phase: $\phi = -t+x\cdot\omega$, $|\omega|=1$ as in \r{10a}. Then \r{7} takes the form 
\be{8}
2 (1, \omega)\cdot (\p_t, \p_x)a_0 +\i \alpha(x)|a_0|^2a_0=0.
\ee
Along the characteristics, parameterized properly by some $s$ as in \r{11} below, we get
\be{9}
2\frac{\d}{\d s}a_0+\i \alpha|a_0|^2a_0=0
\ee
with $\alpha=\alpha(x(s))$, which we denote by $\alpha(s)$ below. We have
\be{aoc}
2\frac{\d}{\d s}|a_0|^2= \dot a_0 \bar a_0+  a_0 \dot {\bar a}_0 = -4\Re(\i \alpha |a_0|^4 )=0.
\ee
Therefore, $|a_0|=\text{const}$ along any fixed characteristic, call it $A$ (the physical intensity is $A^2$), as in the analysis of the Maxwell-Bloch system in \cite[p.69]{Metivier-Notes}. Now we are solving $2\dot a_0+\i A^2\alpha a_0=0$, which has the solution
\[
a_0 = C\exp\Big(-\i\frac{ A^2}2\int \alpha(s)\,\d s\Big). 
\]
Since $|a_0|=A$, the phase of $C$ can be added to the integral above, which would give us another indefinite integral; thus
\be{10}
a_0 = A\exp\Big(-\i \frac{ A^2}2 \int \alpha(s)\,\d s\Big). 
\ee
One can see directly that this solves \r{9}. 

Recall that we have the incoming wave \r{10a} and assume $\chi\ge0$. In time-space, for a fixed $\omega$, introduce the variables $(s,y)=(t,x-t\omega)$; then $(t,x)=(s,y+s\omega)$. Then $\p_s= \p_t+\omega\cdot\p_x$ which justifies \r{9}. The initial condition would be $a_0=\chi(y\cdot\omega)$ for $s\ll0$. Then we can take a definite integral in \r{10} from $-\infty$ to $s$, with a ``dummy variable'' of integration $\sigma$; then 
$A=\chi(y\cdot\omega)$. Passing to the $(t,x)$ variables, after replacing $t-\sigma$ in the integrand by $s$, we get
\be{11}
a_0(t,x) = A\exp\Big(-\i \frac{ A^2}2 \int_{-\infty}^0 \alpha(x+s\omega)\,\d s\Big), \quad A= \chi(-t+x\cdot\omega). 
\ee
An approximate solution (we will justify this later) is
\be{12}
\tilde u = h^{-1/2}\chi(-t+x\cdot\omega) e^{i\Phi}, \quad \Phi:=\frac{-t+x\cdot\omega}{h} -\frac12  \chi^2(-t+x\cdot\omega) \int_{-\infty}^0 \alpha(x+s\omega)\,\d s.
\ee
We  show later that $ u= \tilde u+O(h)$.  %$x\cdot\omega=R\gg1$ 
With \r{2a} in mind, we take $t=T$ to get
\[
h^{1/2}e^{\i(t-x\cdot\omega)/h}u|_{t=T, \, |x\cdot\omega-T|\le\delta} = \chi(-T+x\cdot \omega)\exp\Big(-\frac{\i}2 \chi^2(-T+x\cdot \omega).X\alpha\Big) + O(h),
\]
where $X\alpha = X\alpha(x-(x\cdot\omega)\omega,\omega)$ is the X-ray transform of $\alpha$ along the ray $x\mapsto x+s\omega$, written in coordinates $(z,\omega)$, $z\in\omega^\perp$. By \r{2a} again, we want to restrict this to $|x\cdot\omega-T|\le\delta$ but this is already done since the support of the $\chi$ term lies there (but maybe  not the $O(h)$ error). 
% setting $K:= \chi(0)$, we get
Hence,
\[
h^{1/2}e^{\i(T-x\cdot\omega)/h} \Lambda(u_\textrm{in}) = \chi(-T+x\cdot \omega)\exp\Big(-\frac{\i}2 \chi^2(-T+x\cdot \omega).X\alpha\Big) + O(h). 
\]
This is equality \r{eq:main_thm} in the theorem. 
This  data is over determined and one can restrict it to $x\cdot\omega=T$, for example, to get 
\be{13a}
h^{1/2} \Lambda(u_\textrm{in})|_{x\cdot\omega=T} = K\exp\Big(-\i\frac{K^2}2 X\alpha\Big)+O(h), \quad K:=\chi(0). 
\ee
It is easy to see now that we get $X\alpha$ modulo $4\pi/K^2$ up to an $O(h)$ error. If we have an a priori estimate of $\max|\alpha|$, we can choose $K$ small enough to determine $X\alpha$ uniquely, up to $O(h)$. Even if we do not, we can use the following argument. By \r{13a}, for each $\omega$,
\[
\frac{K^2}2 X\alpha(x,\omega) = -\Im \log( K^{-1} h^{1/2} \Lambda(u_\textrm{in})|_{x\cdot\omega=T}) +2\pi g(x, \omega) + O(h), 
\]
where $g$ is a possibly discontinuous function taking integer values only. Here, $\log$ is the logarithmic function of a complex argument, say in $\mathbf{C}\setminus (-\infty,0]$ with its principal branch there; which makes it a priori discontinuous. The error term can jump by $O(h)$ only; therefore, the discontinuities of the other two terms on the right must cancel each other.  Since $X\alpha=0$ for $|z|\gg0$, the first term on the right vanishes there up to $O(h)$; then $g=0$ there. This allows us to resolve the ambiguity caused by the multi-valued behavior of the log function, responsible for the term $2\pi g$ there. If we take any $z$ (with $\omega$ fixed) and connect it to some $z_0$ with $|z_0|\gg1$ with a continuous curve, then along that curve, the l.h.s.\ is continuous. Therefore, we can choose $g$ along that curve so that the r.h.s.\ stays continuous; i.e., we chose the appropriate branch of the logarithmic term by continuity. This does not depend on the curve because we work with data corresponding to some $\alpha$ by assumption, so the result is the l.h.s.

\begin{remark}\label{rem_g}
This construction works for non-linearities of the type $\alpha |u|^{m-1}u$, $m\ge2$ integer. Also, it works for $\alpha=\alpha(t,x)$; then one gets the light-ray transform of $\alpha$ instead of $X\alpha$. One can also have the Laplacian related to a Riemannian metric in \r{1}. The proof of the existence and well-posedness however, see Appendix~\ref{sec_gl},   when it holds, would take additional efforts. 
\end{remark}

\subsection{Lower order terms and justification} \label{sec_just}
To get to the lower order terms, write
\[
a=a_0+ha_1+h^2a_2+\dots, 
\]
then
\[
\begin{split}
|a|^2a &=  (a_0+ha_1)^2  (\bar a_0+h\bar a_1) + O(h^2) \\
&= |a_0|^2a_0+  2h a_0^2\bar a_1+ 2h |a_0|^2 a_1+ O(h^2).
\end{split}
\]
Then the next transport equation is, see \r{3} and \r{8},
\[
2 (1, \omega)\cdot (\p_t, \p_x)a_1 +\i   \left(\alpha a_0^2\bar a_1+ 2 \alpha|a_0|^2 a_1+ \Box a_0\right)=0
\]
with zero initial conditions when $x\cdot\omega\ll1$. 
In the notation of \r{9} and \r{11}, this takes the form
\[
2\frac{\d}{\d s}a_1+\i \alpha a_0^2\bar a_1+  2\i\alpha A^2 a_1+ \i \Box a_0=0, \quad a_1|_{s\ll0}=0.
\]
%I do not see an obvious way to get a nice formula for the solution. 
It is a linear non-homogeneous system for $\Re a_1$ and $\Im a_1$ with coefficients depending on $a_0$. 

The next transport equation takes the form
\[
2\frac{\d}{\d s}a_2+\i \alpha g_1(a_0,\bar a_0,a_1,\bar a_1) a_2+ \i \alpha g_2(a_0,\bar a_0,a_1,\bar a_1) \bar a_2+ \i \Box a_1=0,
\]
with zero initial condition again, where $g_{1,2}$ are  polynomials of third degree, etc.

After a finite number of steps, one gets an approximate solution $u_N$ solving
\be{PDEh}
\Box u_N + \alpha(x)|u_N|^2u_N=R_N = O(h^{N-1/2}),
\ee
with zero Cauchy data at $t=0$. The estimate on the right is in the uniform norm on $[0,T]\times B(0,R')$ with every fixed $R'>R$ and fixed $T>0$. If we use Sobolev norms, the r.h.s.\ is $O_{H^s}(h^{N-1/2-s})$.

\subsection{End of the proof of Theorem~\ref{thm_main}}
To complete the proof of Theorem~\ref{thm_main}, we will compare $u_N$ and the exact solution $u$ of \r{1}, \r{1c} guaranteed to exist by Theorem~\ref{thm_ex} and by Remark~\ref{remark_B1} after it. They solve \r{PDEh} and \r{1}, respectively, with the same initial condition \r{10a}. By the same remark, we can assume that the initial conditions are cut smoothly to a large ball of radius depending on $T$. Then the parametrix construction remains the same (and it is trivial for rays not intersecting $\supp\alpha$), so \r{PDEh} remainb true. 
We  apply Theorem~\ref{thm_stab2}. The $H^1$ norm of $R_N$ in \r{PDEh} is $O(h^{N-3/2})$, and to get an $O(h^{1/2})$ error, we need $N=2$ at least. By Remark~\ref{rem_final}, then  $\|u(t)-u_N (t)\|_{L^\infty}= O(h^{1/2})$. This remains true if we replace $u_N$ there with its principal term $u_1$ because its subsequent term contributes at most $O(h^{1/2})$ to $u$. This completes the proof of Theorem~\ref{thm_main} (notice the factor $h^{1/2}$ in \r{eq:main_thm}).

%\newpage
\section{Numerical experiments} 
\subsection{Take $\alpha$ to be a Gaussian} 
Our setup is the following. We take $n=2$ and solve \r{1} in the square $[-1,1]^2$ discretized into a $1,000\times 1,000$ mesh. For $\alpha$, we take an elliptic Gaussian centered at the origin with maximal value $1$ (i.e., not normalized for unit integral):
\be{al}
\alpha(x) = e^{-(x_1/0.2)^2 - (x_2/0.1)^2 }.
\ee
We take a plane wave starting at the bottom of the picture with corresponds to \r{10a} with $\chi$ another (1D) Gaussian
\be{13a'}
\chi(x) = Ke^{-(x_2/0.14)^2},
\ee
where $K>0$ controls the size of the initial data, see also \r{13a}. We refer to Figure~\ref{fig_setup} for an illustration. 
We take $K$ ``not too far'' from $K=1$ because if $K$ is too large or too small, we are
entering a different regime effectively. The regimes are defined as asymptotic ones; and we are taking only a few values of $h$. 
 We  take  $K=1$ first. If we take $K\gg1$, we are in the fully non-linear regime, if $K\ll1$, we are in the linear one.

\begin{figure}[ht] 
\begin{center}
	\includegraphics[trim = 0 100 0 50 , scale=0.18]{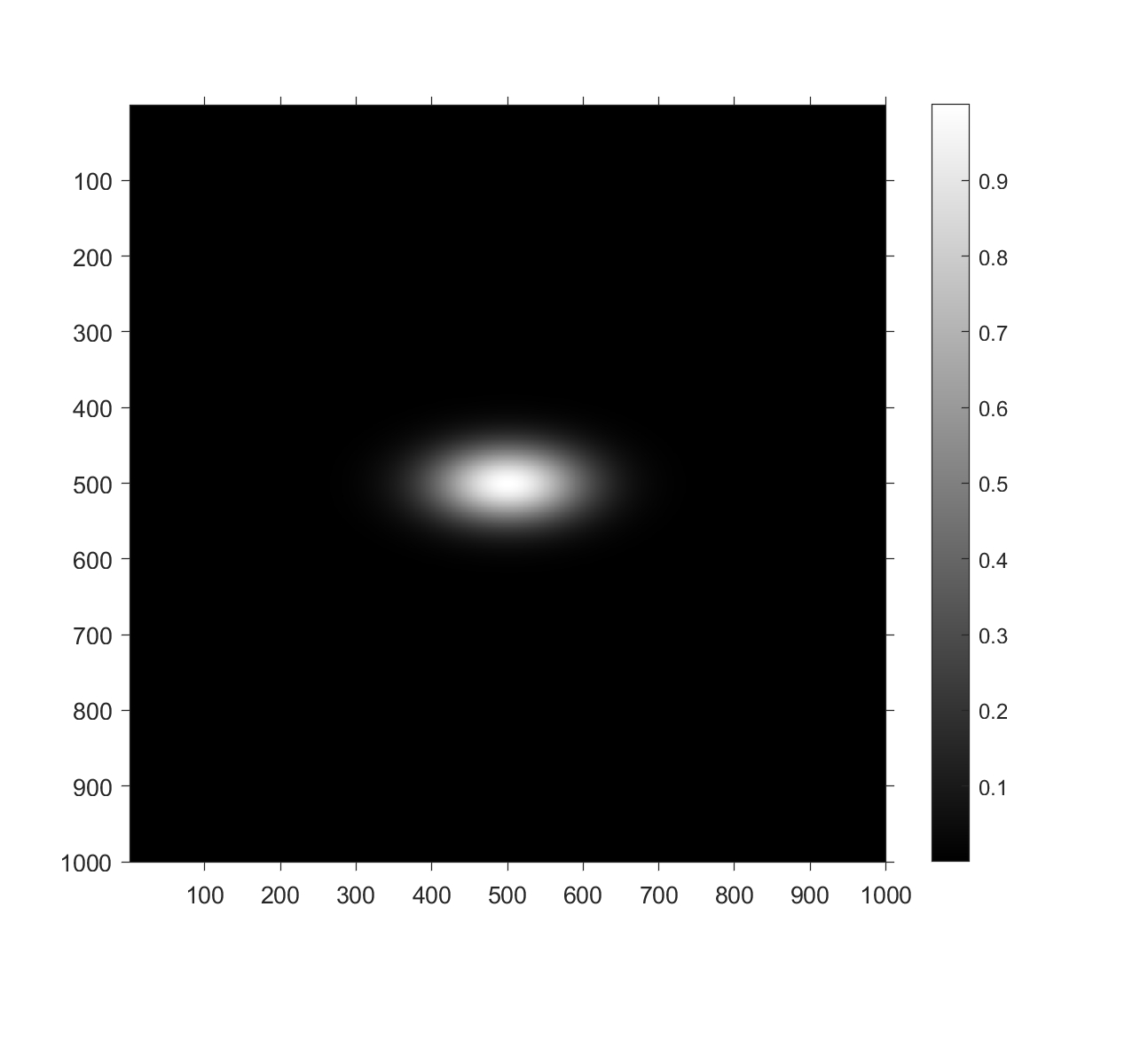}
	\includegraphics[trim = 0 100 0 50 , scale=0.18]{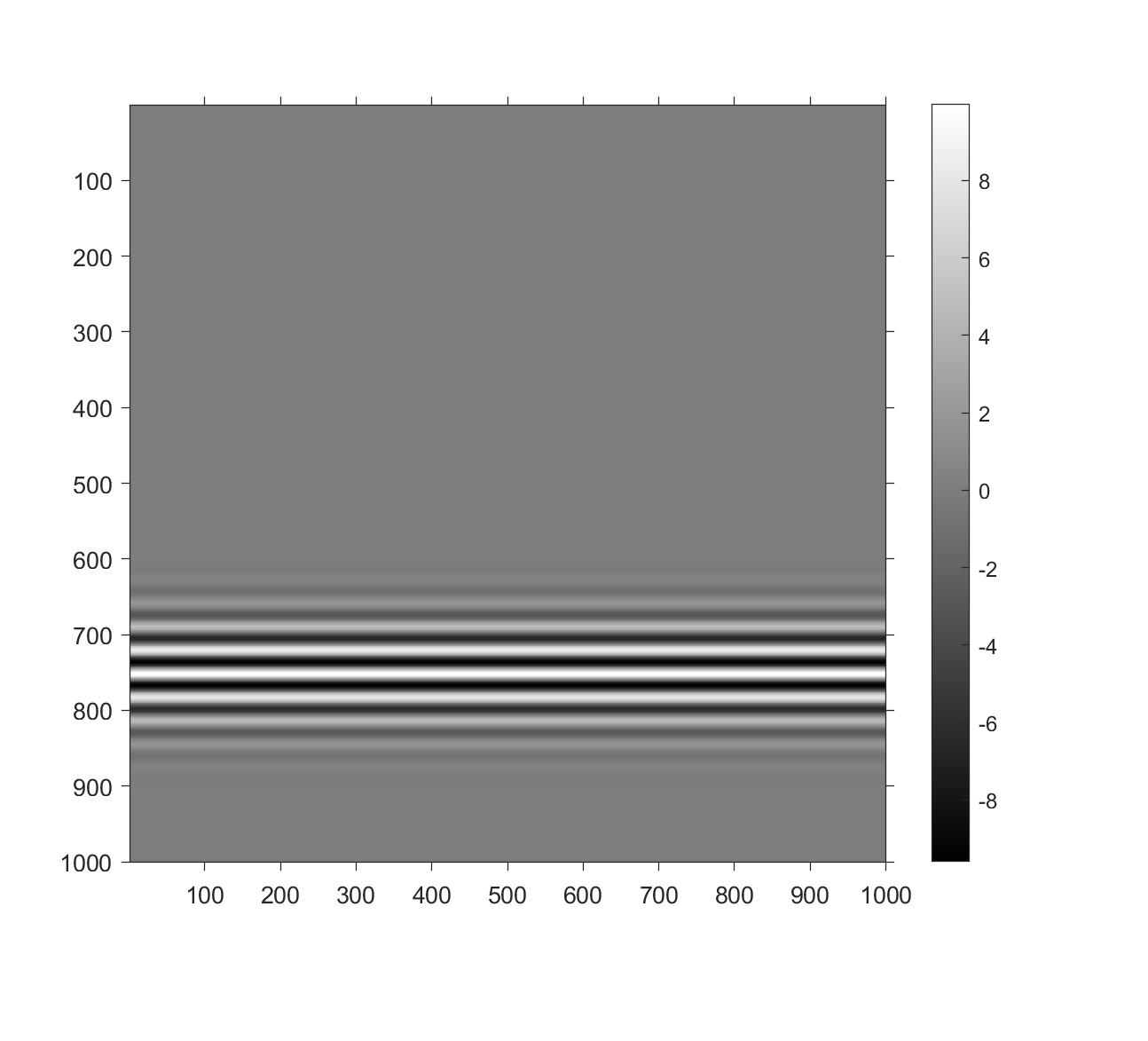}
\end{center}
\caption{\small   Left: the non-linearity $\alpha(x)$. Right: The initial condition $u|_{t=0}$ with $u_t|_{t=0}$ chosen so that it would travel upward (only).}
\label{fig_setup}
%\end{wrapfigure}
\end{figure}

The non-linearity $\alpha$ is ``essentially supported'' away from the ``support'' of $u_\text{\rm in}$ at $t=0$ (we shift $t$ so that $t=0$ corresponds to $t=-R=\delta$ in \r{1c}) and essentially leaves the ``support'' of $\alpha$ at time $t=1$, see Figure~\ref{fig_K1}. Then $\Lambda$ in this case maps the solution at $t=0$ to that at $t=1$, the latter localized where $\chi(-1+x\cdot\omega)$ is ``essentially supported'', which is also true for the solution $u$ as well.  

We take $h=0.01$ first. This $h$ is not small enough relative to the standard deviation of $\chi$ and the pattern looks a bit like a coherent state in vertical direction. We really want the wavelength to be much smaller than the ``support'' of $\chi$. Here it is borderline so. 
We do not show the linear case ($K\ll1$). The pattern just moves upward, nothing interesting. 

We take $K=1$ first, see Figure~\ref{fig_K1}.  
\begin{figure}[ht]
\begin{center}
	\includegraphics[trim = 0 100 0 50 , scale=0.15]{IC1} 
	\includegraphics[ trim = 0 100 0 50 , scale=0.15]{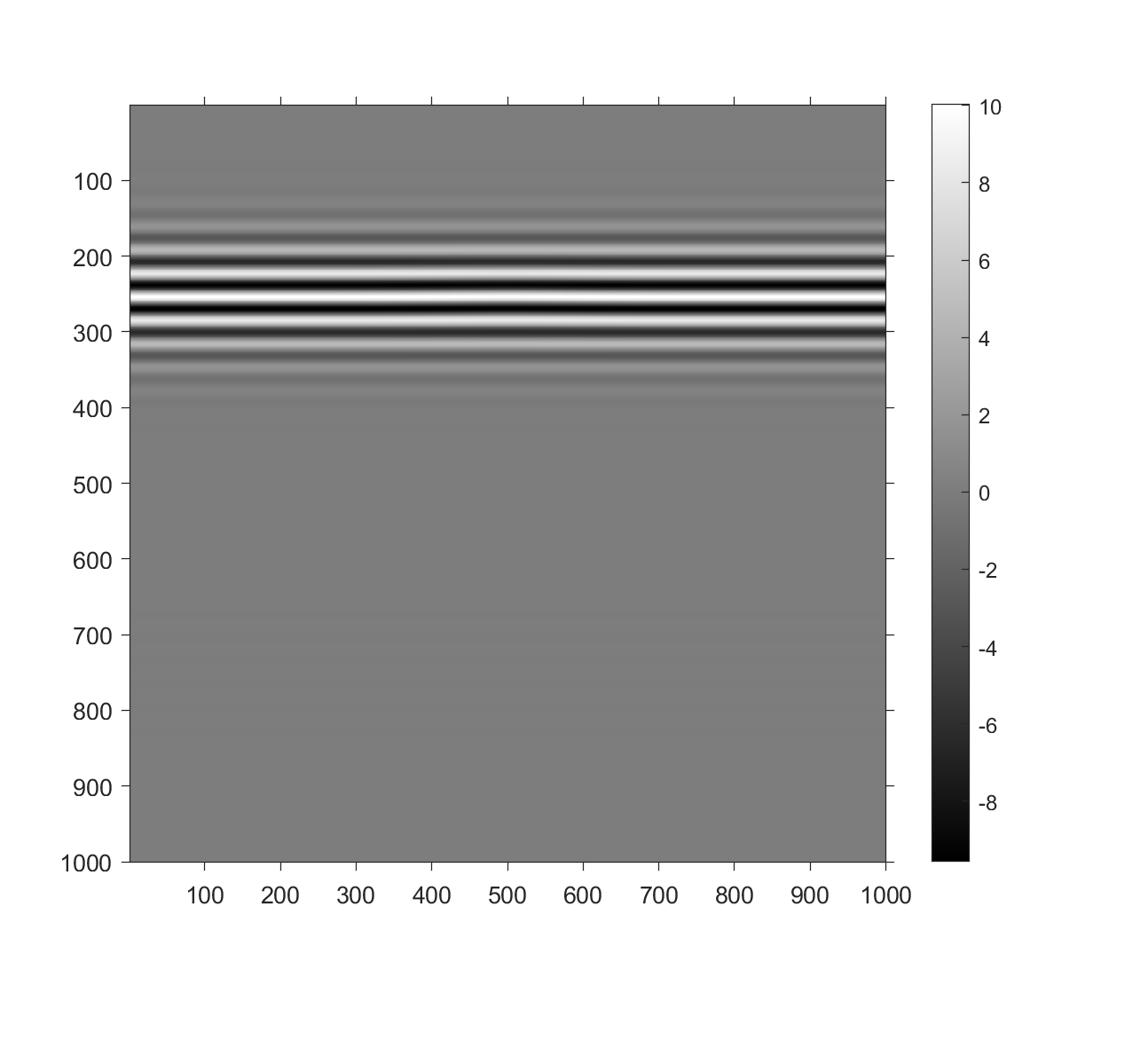}
	\includegraphics[trim = 0 50 0 50  , scale=0.137]{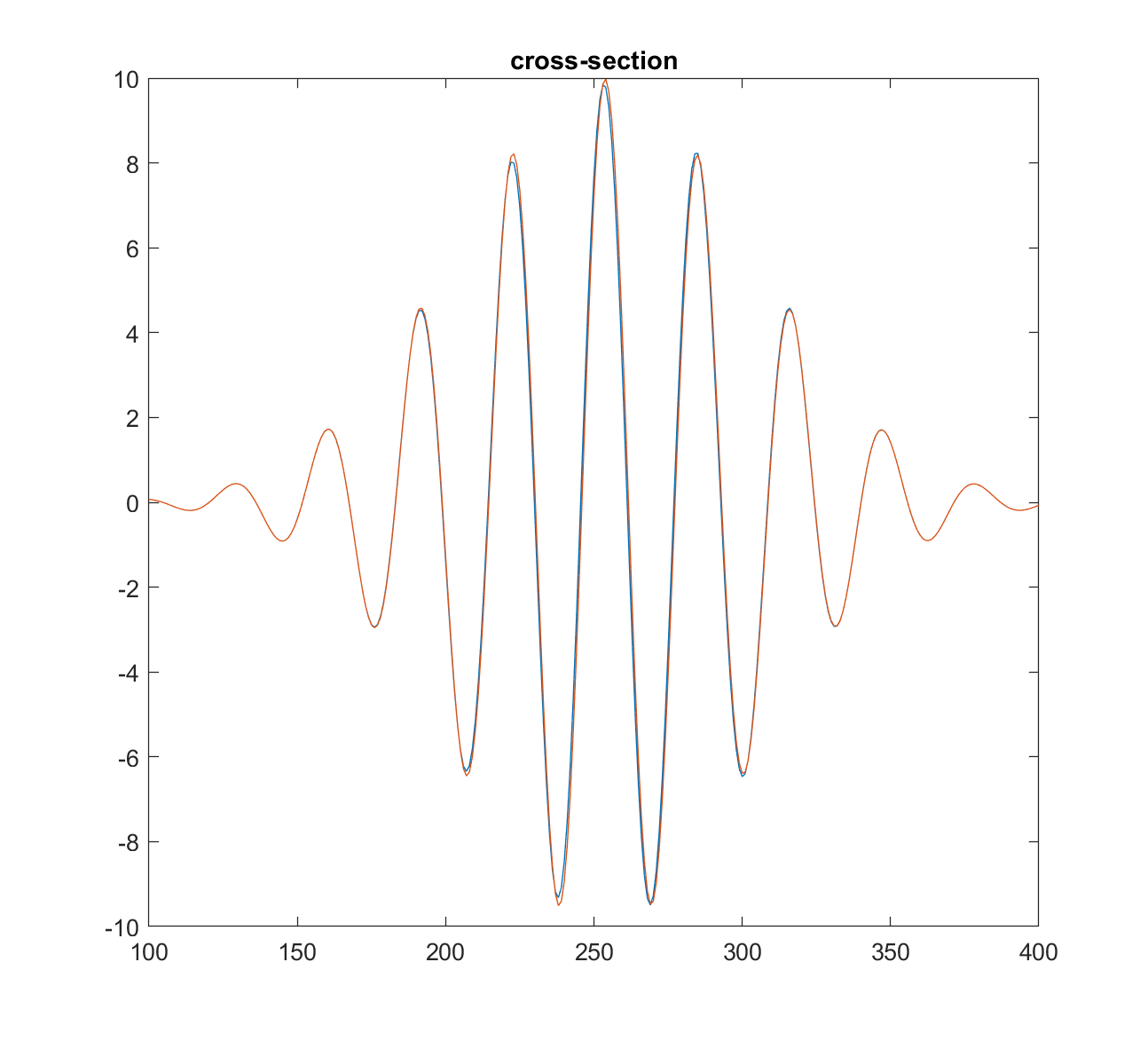}
\end{center}
\caption{\small $K=1$.  Left: Initial condition. Center:   $\Re u(1,x)$ at time $t=1$ . Right: Plot of a vertical cross section of $\Re u(1,x)$ through the center and that of the linear solution (the red curve).  The left end corresponds to the top of the center plot. They are very close. }
\label{fig_K1}
\end{figure}
The solution at $t=1$ looks very close to that of the linear PDE. A plot of the cross-section, with that of the linear solution super-imposed reveals a very slight phase shift. In Figure~\ref{fig_K1a} we show a crop of that cross-section and then 
\be{14}
\Im \log h^{1/2}e^{\i(t-x\cdot\omega)/h}u|_{t=1} \approx \frac12K^2\chi^2(-1+x_2)X\alpha(x_1),\quad K=1,
\ee
where $X\alpha(x_1)$ is the X-ray transform of $f$ along vertical lines. The picture confirms our calculations, for example the maximal value of the blob there is very close to what the right-hand of the formula above predicts. 

\begin{figure}[ht]
\begin{center}
	\includegraphics[trim = 0 100 0 50 ,scale=0.137]{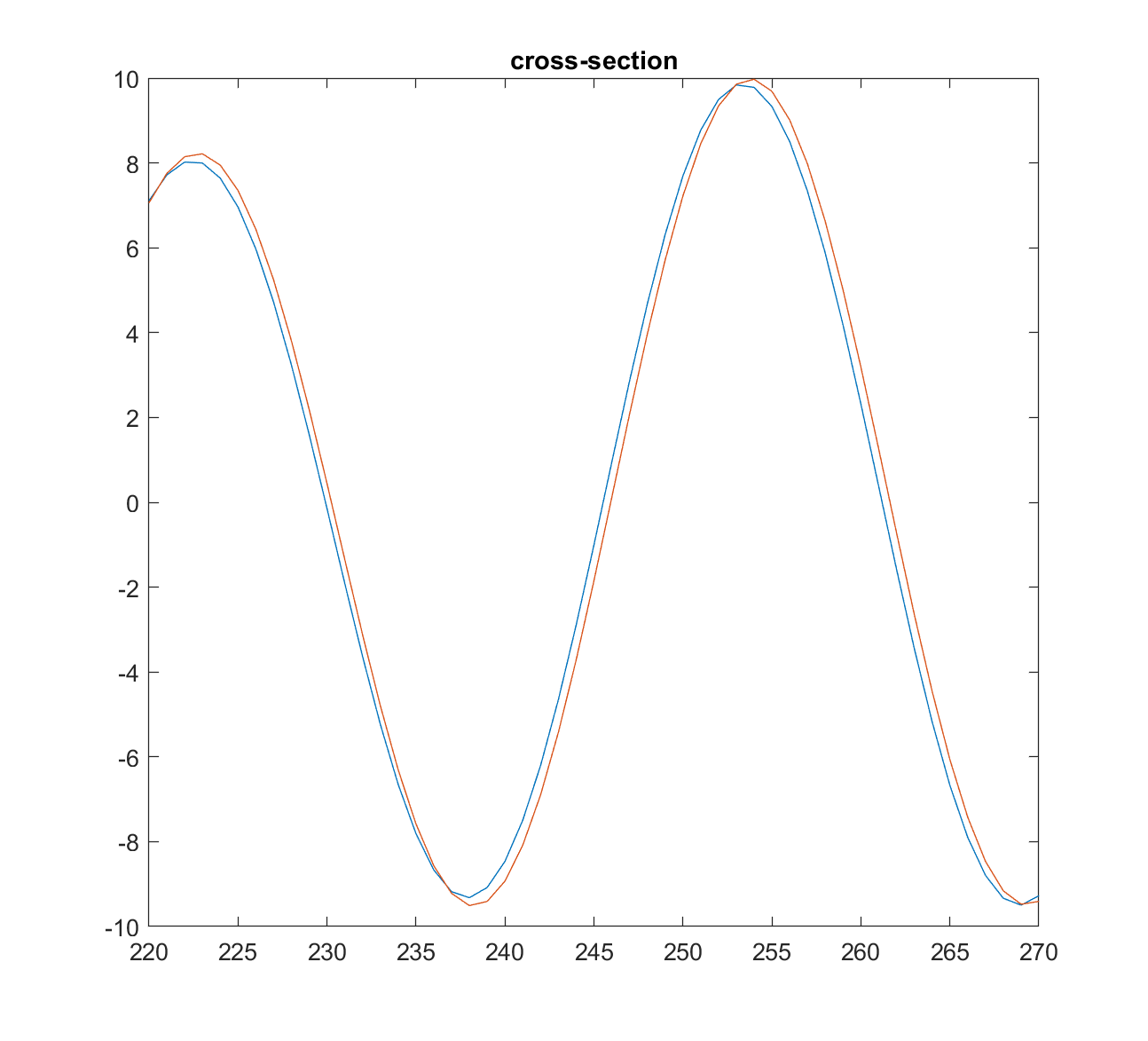}
	\includegraphics[trim = 0 150 0 50 , scale=0.15]{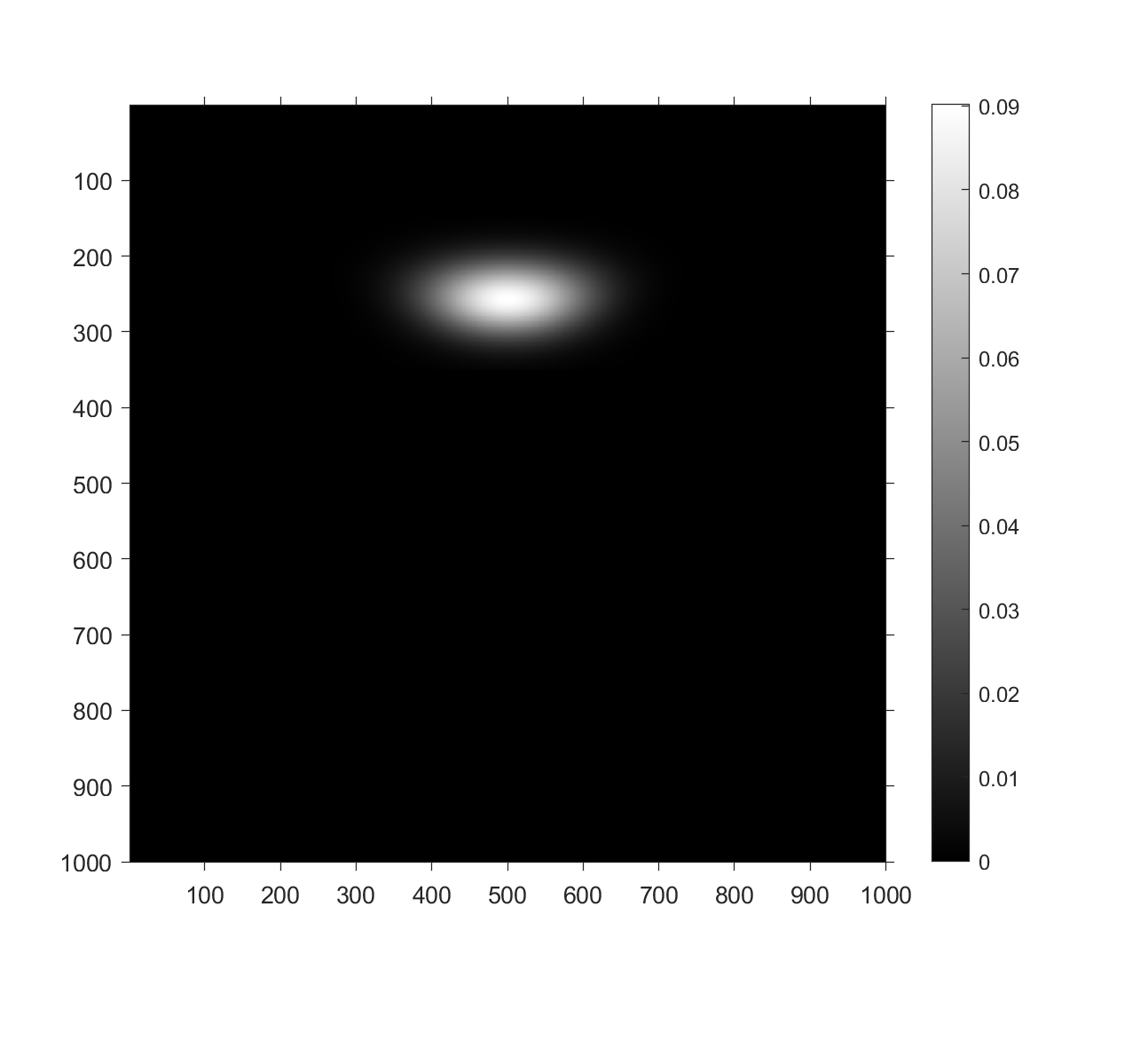}
\end{center}
\caption{\small   Left: the cross-section above, zoomed in. Right: the phase shift.}
\label{fig_K1a}
%\end{wrapfigure}
\end{figure}

The maximum in \r{14} is about $0.088$. That gives us distances between the zeros of the osculations of the order of $(2\pi\pm 0.088)h$, i.e., maximal relative shift of the order of $0.88/(2\pi)\approx 0.014 = 1.4\% $, see \r{16}. This is what we see in Figure~\ref{fig_K1a}. 

In the next example in Figure~\ref{fig_K5}, $K=5$. This is getting a bit close to the fully non-linear case. The phase shift is much more apparent now. We are multiplying by $K^2=25$ in \r{14} to get a relative shift $\approx 35\%$. The amplitude of the non-linear oscillations drops visibly by about 20\%, which is an effect that can be explained either by the next term in the expansion or by the fact that we are getting closer to the fully non-linear regime. 
\begin{figure}[ht]
\begin{center}
	\includegraphics[trim = 0 130 0 50 , scale=0.15]{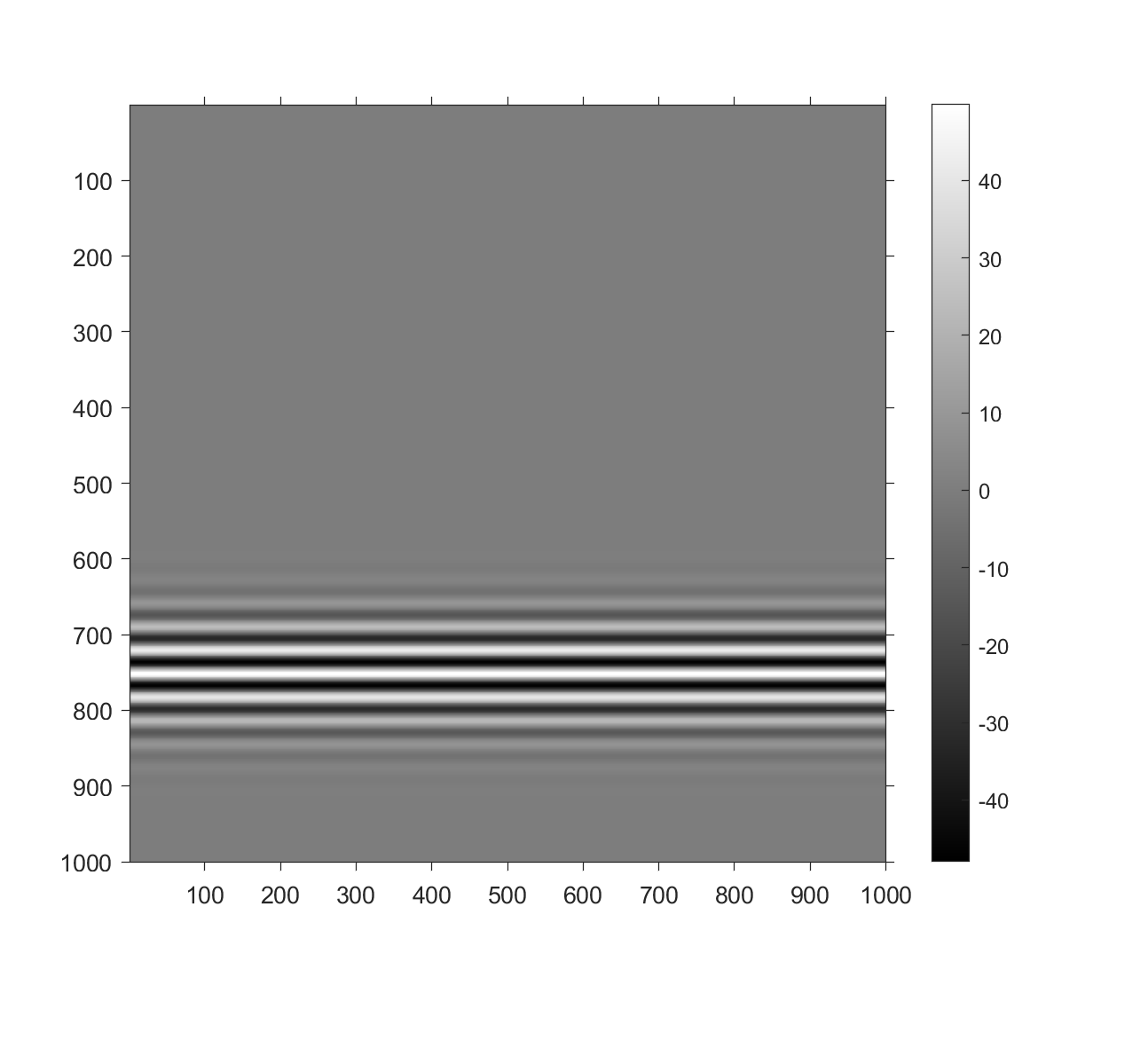}
	\includegraphics[ trim = 0 130 0 20 , scale=0.15]{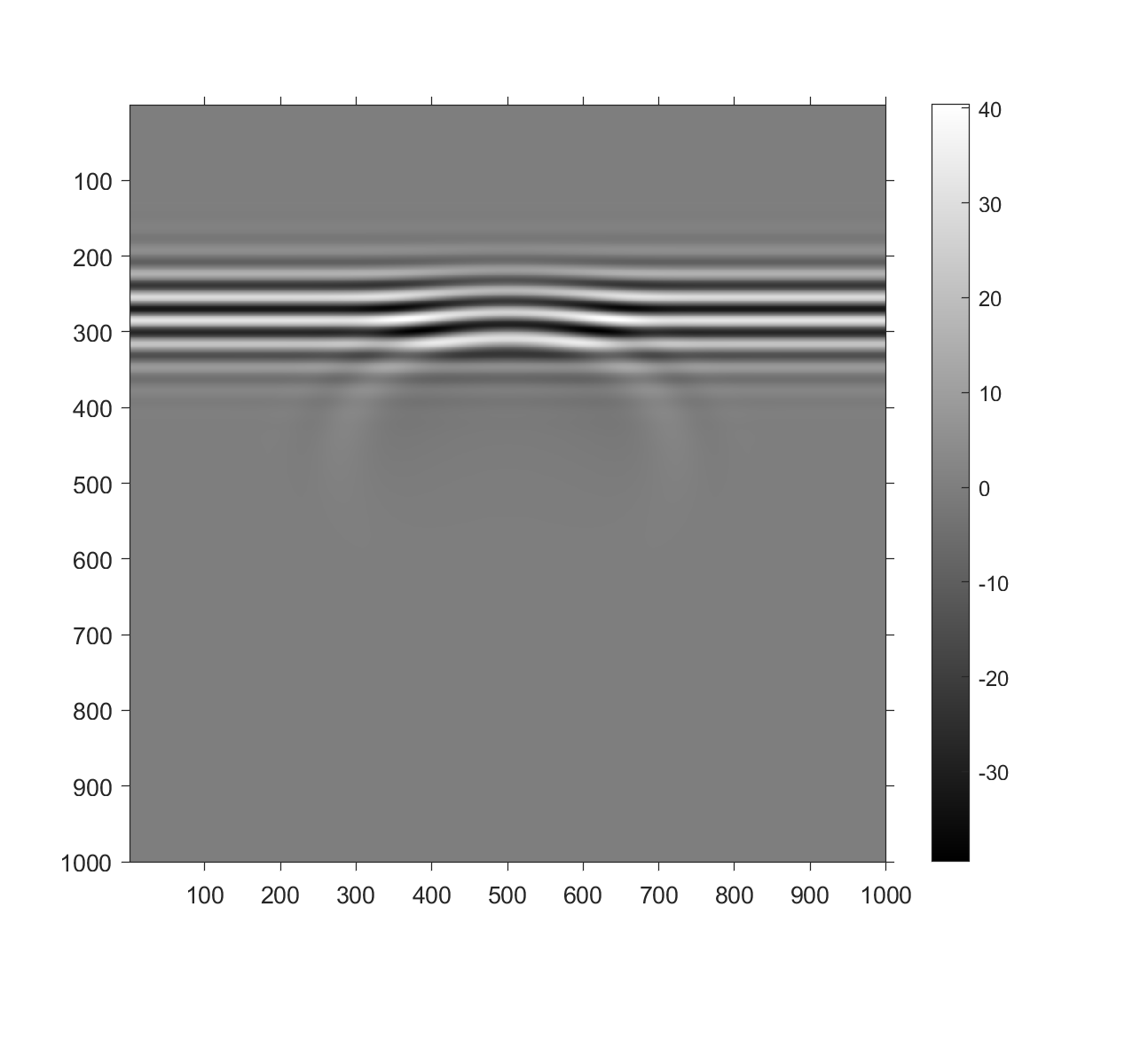}
	\includegraphics[trim = 0 90 0 20, scale=0.137]{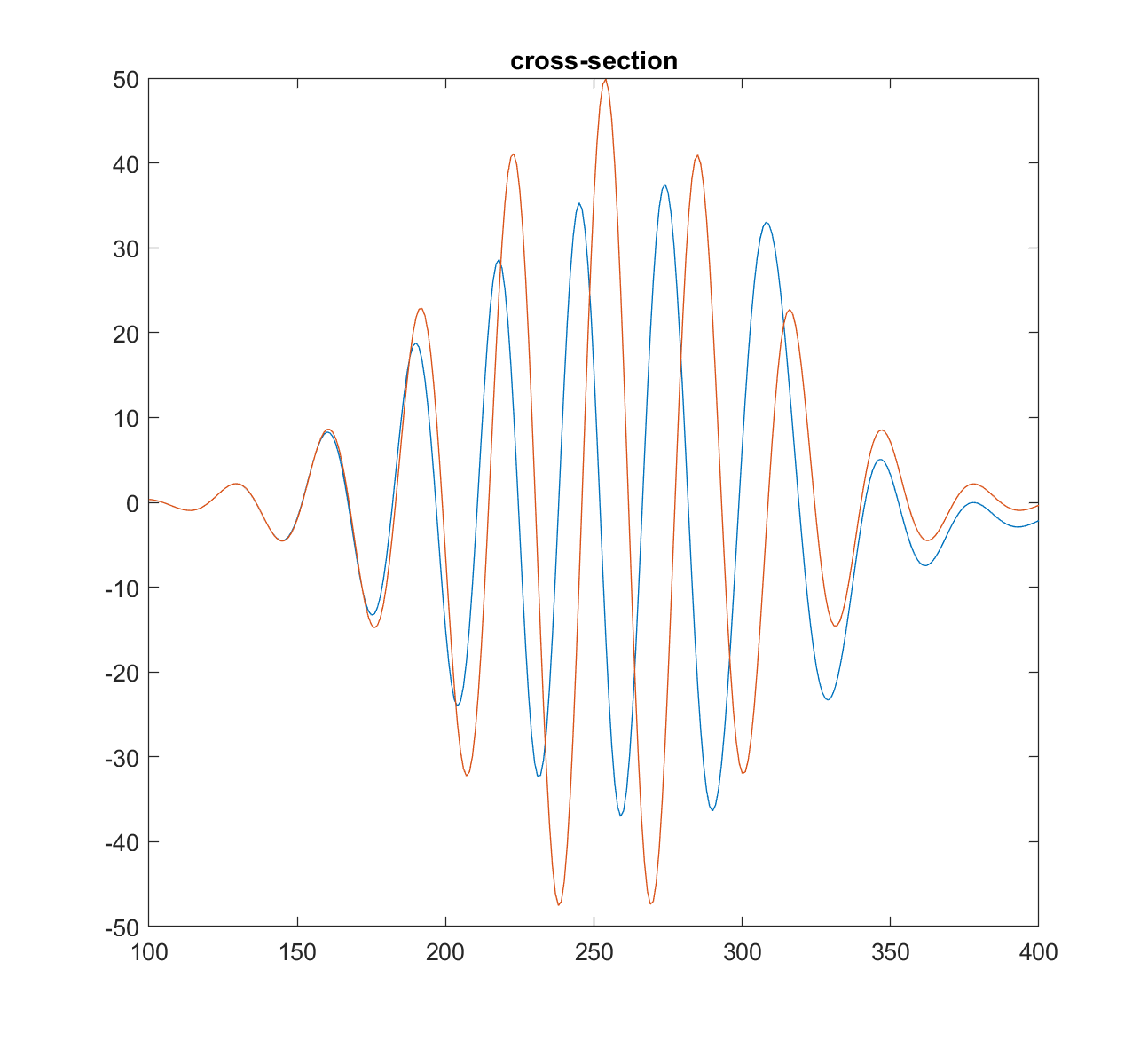}
\end{center}
\caption{\small $K=5$, $h=0.01$.  Left: Initial condition. Center:   $\Re u(1,x)$ at time $t=1$ . Right: Plot of a vertical cross section of $\Re u(1,x)$ through the center and that of the linear solution (the red curve). 
}
\label{fig_K5}
\end{figure}

We decrease $h$ now by half: $h=0.005$. Then the reflection above disappears, and the relative shift is similar, see Figure~\ref{fig_K5_h005}. The phase shift computed by \r{14} looks pretty much like in Figure~\ref{fig_K1a}, left, but even more symmetric, with maximal value $2.1-2.2$, which is what that formula predicts. We are mainly in the weakly non-linear regime now. If we take $h$ even smaller, we are really there, of course, regardless of the value of $K$ which stays fixed.

As a test, one can plot $|u(t,x)|$ and compare that to the linear solution which is just $u_\text{\rm in}|_{t=1}$. If the approximation \r{12} is a good one, one should not see the non-linear effect (up to an $O(h)$ error) because the latter is in the phase only. In this case, it is almost not-visible (plot not shown), i.e., the absolute values of the linear and the non-linear solutions are very close, which is a good confirmation. 

\begin{figure}[ht]
\begin{center}
	\includegraphics[trim = 0 150 0 0 , scale=0.11]{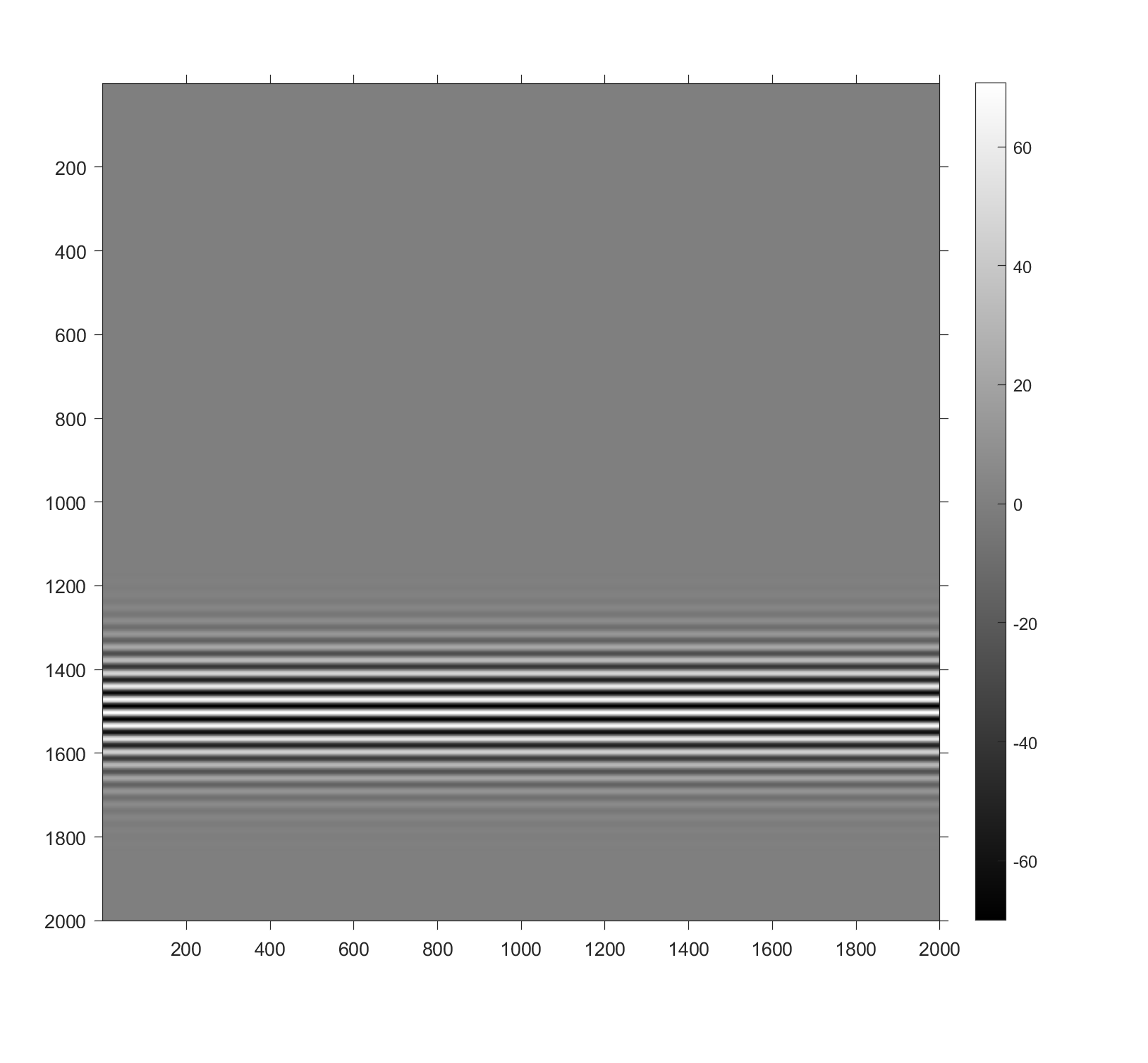}
	\includegraphics[ trim = 0 150 0 0 , scale=0.11]{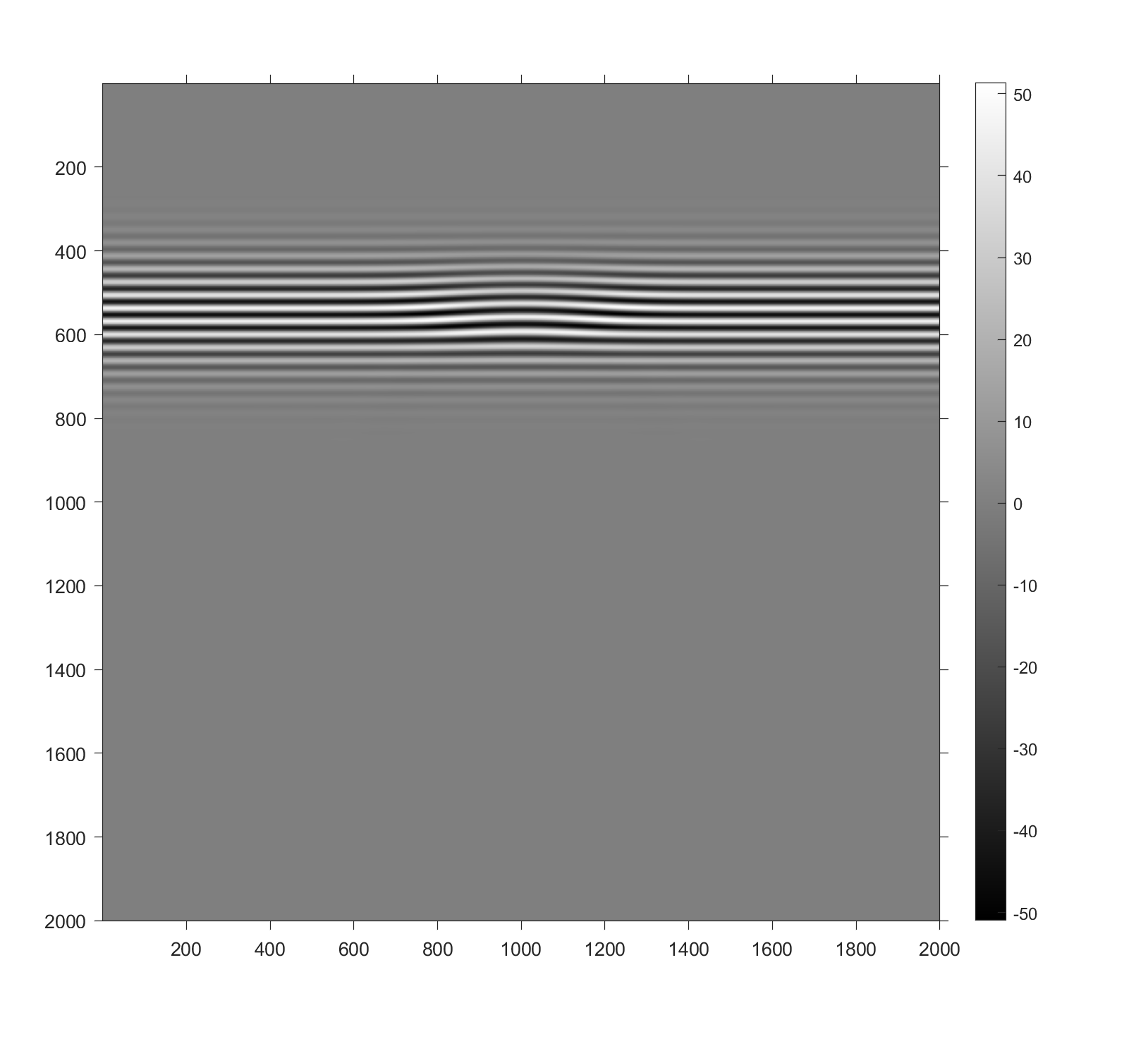}
	\includegraphics[trim = 0 130 0 0, scale=0.105 ]{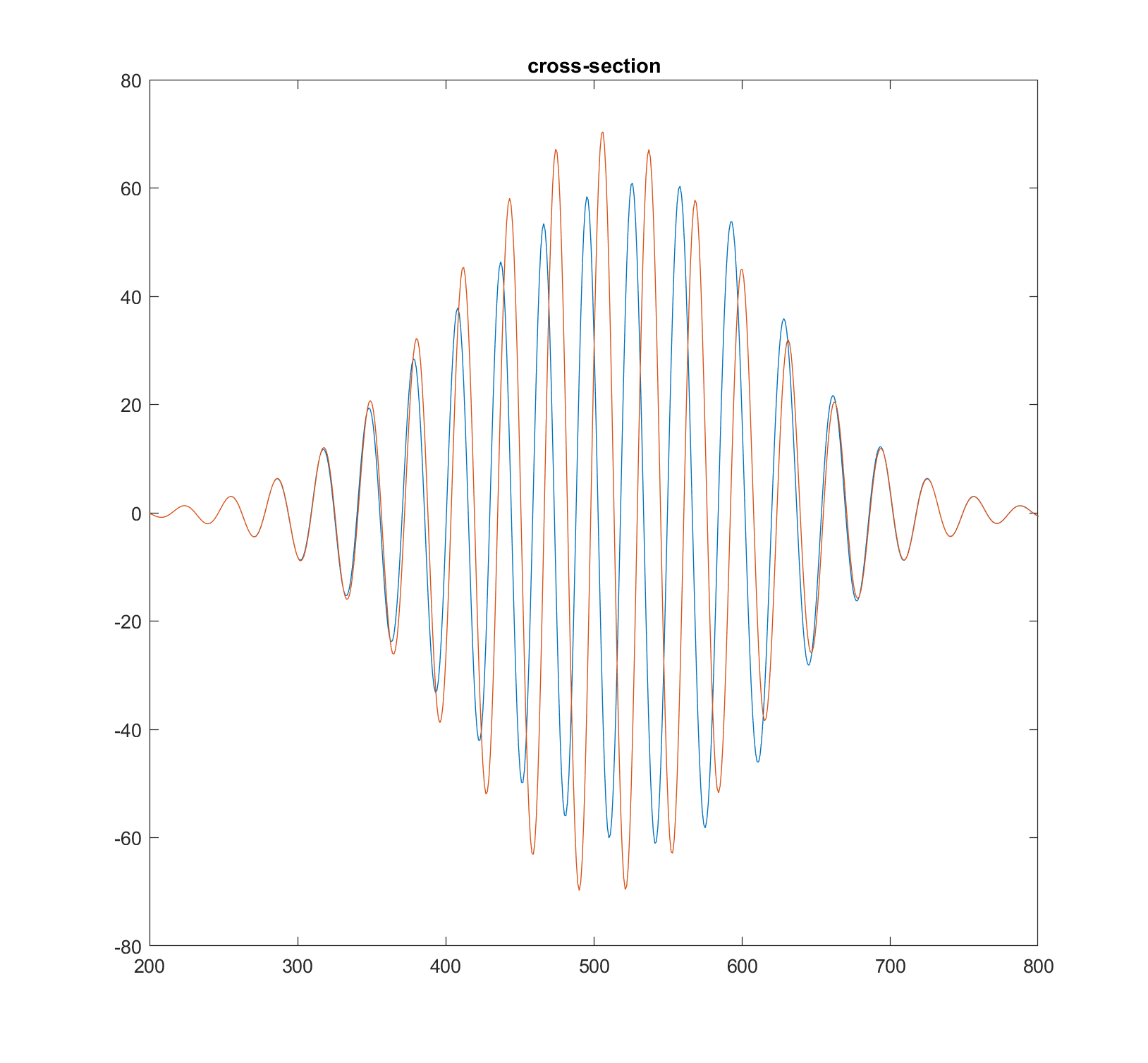}
\end{center}
\caption{\small $K=5$.  As above but $h=0.005$ now. %They are very close to each other.
}
\label{fig_K5_h005}
\end{figure}

For numerical purposes it is good to integrate the phase shift \r{14} w.r.t.\ $x_2$ over $\supp\chi$ instead of fixing $x\cdot\omega=R$ (in this case, taking $x_2=1$) as in \r{13a}. This corresponds to integral in vertical direction over the top $1/3$ or so in Figure~\ref{fig_K5_h005}, for example. Then we get a multiple of $X\alpha$:
\be{data}
\text{Data}(x_1)= C X\alpha, \quad C:=\frac12 K^2\int\chi^2(s)\,\d s,
\ee
where ``Data'' is \r{14} integrated in $x_2$. 
 In Figure~\ref{fig_K5_h005_p}, we show a plot of the theoretical data vs.\ the computed one, with $K=5$ and $h=0.005$. There is a very close match. 

\begin{figure}[ht]
\begin{center}
	\includegraphics[trim = 0 50 0 0 , scale=0.25]{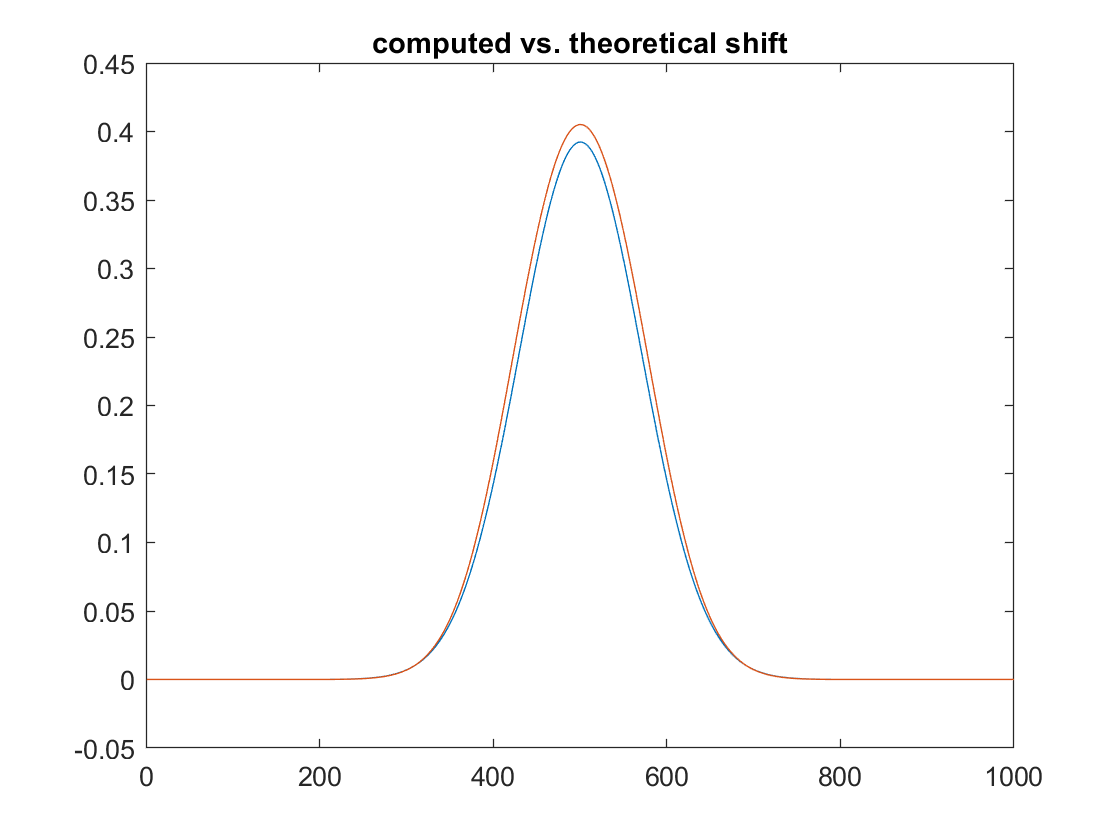}
\end{center}
\caption{\small $K=5$, $h=0.005$.  The theoretical data (in blue) vs.\ the computed one (in red).  
}
\label{fig_K5_h005_p}
\end{figure}

It follows from \r{12} that the absolute phase shift is 
\[
-\frac{1}2 hK^2 \chi^2(-t+x\cdot\omega) X\alpha, \quad \text{where $t=1$}.
\]
Recall that in \r{2} we have the prefactor $Kh^{-1/2}$ because of \r{13a'}. For $h\ll1$, the absolute phase shift tends to zero. 
If $\alpha=0$, the fundamental quasi-period is $2\pi h$. 
Therefore, the relative phase shift is
\be{16}
-\frac1{4\pi}  K^2\chi^2(-1+x\cdot\omega )X\alpha,
\ee
which is $h$-independent. 
The negative sign says that the wave ``speeds up''  which we see numerically as well. Its maximum is where $ -1+x\cdot\omega=1 $ (for our choice of $\chi$), i.e., at the center of the outgoing wave, and this can be observed numerically as well, see Figure~\ref{fig_K5_h005}, right.

\subsection{Take $\alpha$ to be the Shepp-Logan phantom} 
We take $\alpha$ to have be the Shepp-Logan phantom, blurred a bit. 
It occupies the $[-0.5,0.5]^2$ square in $[-1,1]^2$, see Figure~\ref{fig_SL1}. On the right, we plot the X-ray transform along vertical lines as computed by the phase shift (in red) vs.\ the theoretical one in blue. One can see the Gibb's phenomenon (or aliasing) since the wavelength is not small enough to capture the boundary. This is quite clear by just plotting the waves and the phantom one over the other. The analysis of the needed wavelength,  related to the frequency content of the phantom, would be complicated since for very sharp boundaries, one can see diffraction affects. In any case, if the phantom is smooth (which it is), as $h\to0$, we would get an accurate approximation. 
\begin{figure}[ht]
\begin{center}
	\includegraphics[trim = 0 50 0 0 , width=0.3\textwidth]{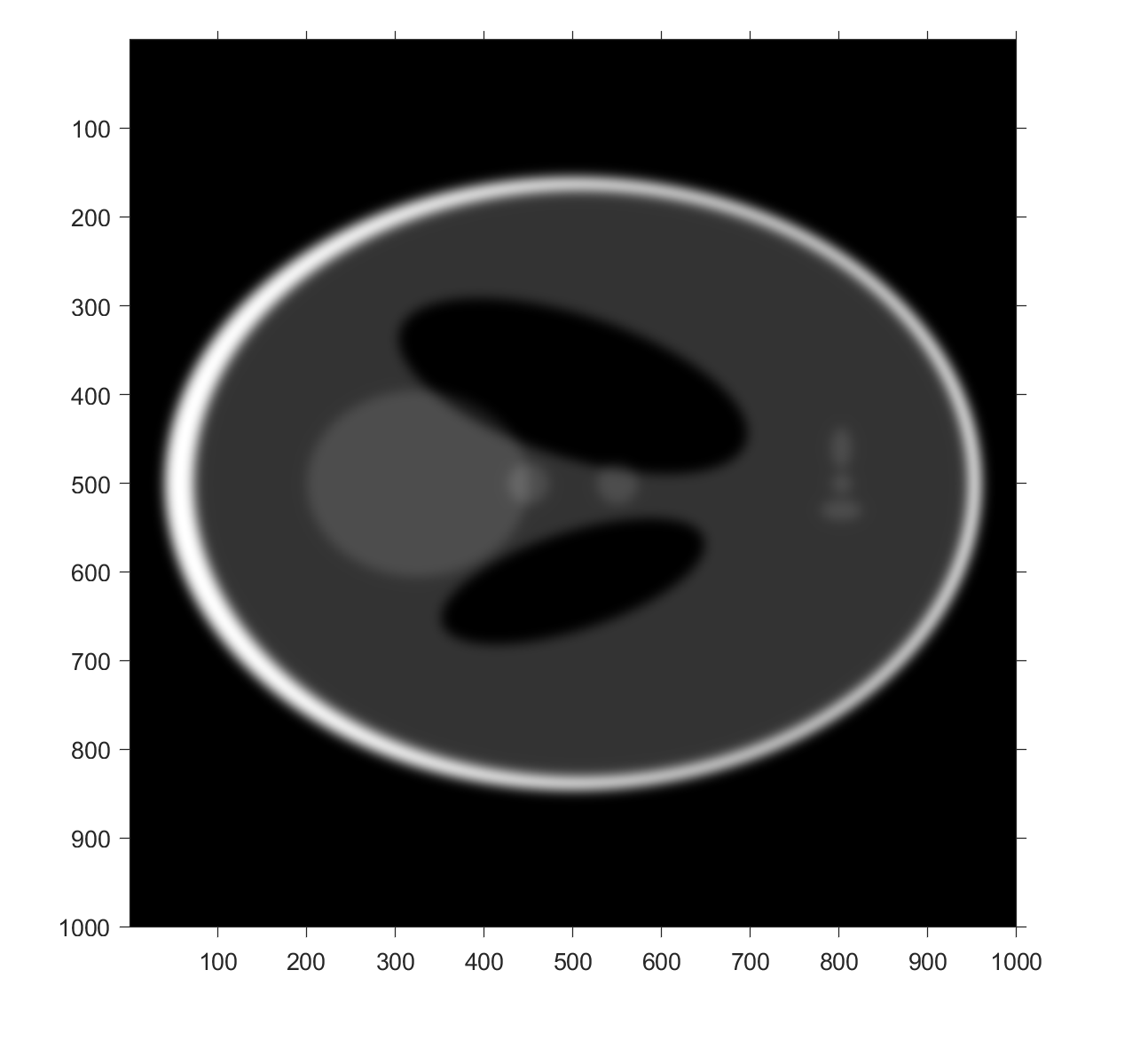}
	\includegraphics[ trim = 0 50 0 0 , width=0.38\textwidth]{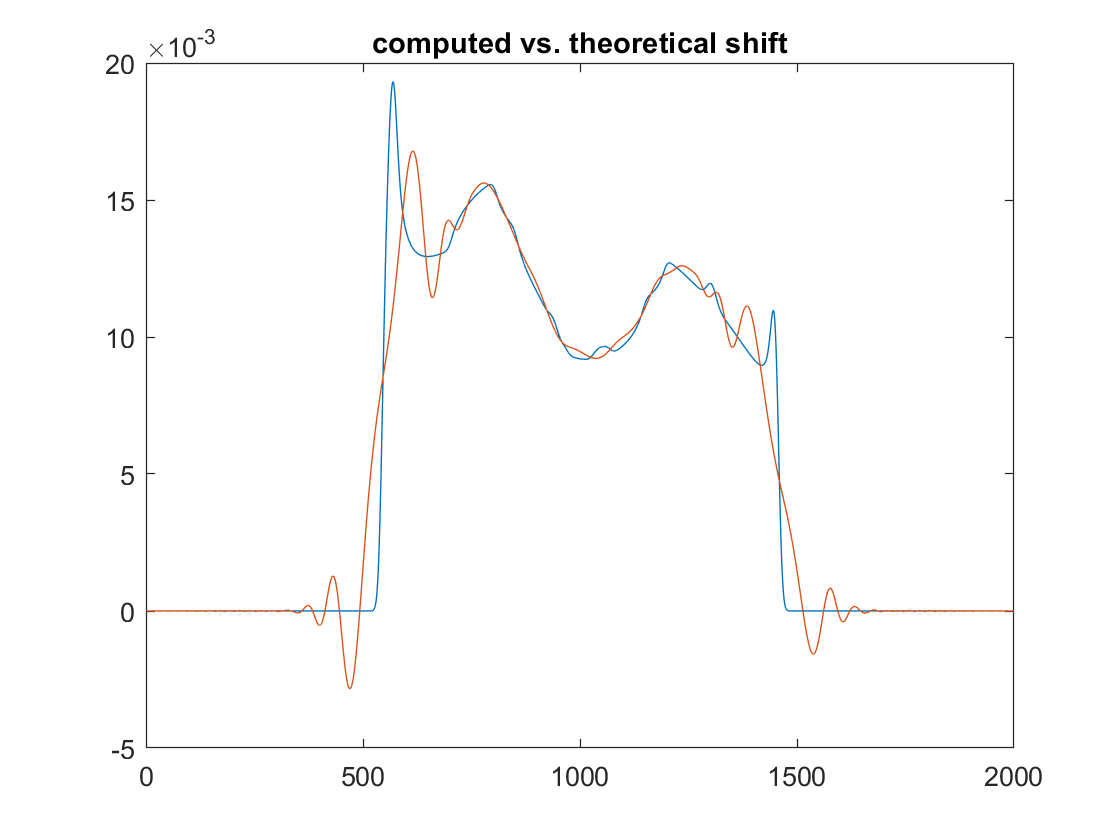}
\end{center}
\caption{\small $K=1$, $h=0.0025$. Left: the Shepp-Logan phantom. Right: The theoretical data in blue (its X-ray transform along vertical lines) vs.\ the computed one in red.  %They are very close to each other.
}
\label{fig_SL1}
\end{figure} 

\subsection{Stability and noise considerations} 

Theorem~\ref{thm_stab} illustrates that the direct problem is stable under small perturbations of the initial conditions. 
In the next example, we add $10\%$ noise (relative to the maximal value) to the initial condition \r{10a}. The waves propagate without blowing up and look weakly affected by the noise (plot not shown).   
In Figure~\ref{fig_K5_h0025_noise2}, we plot the theoretical shift in blue computed by \r{data} vs.\ the computed one in red.

\begin{figure}[ht]
\begin{center}
	\includegraphics[trim = 0 0 0 0, scale=0.2 ]{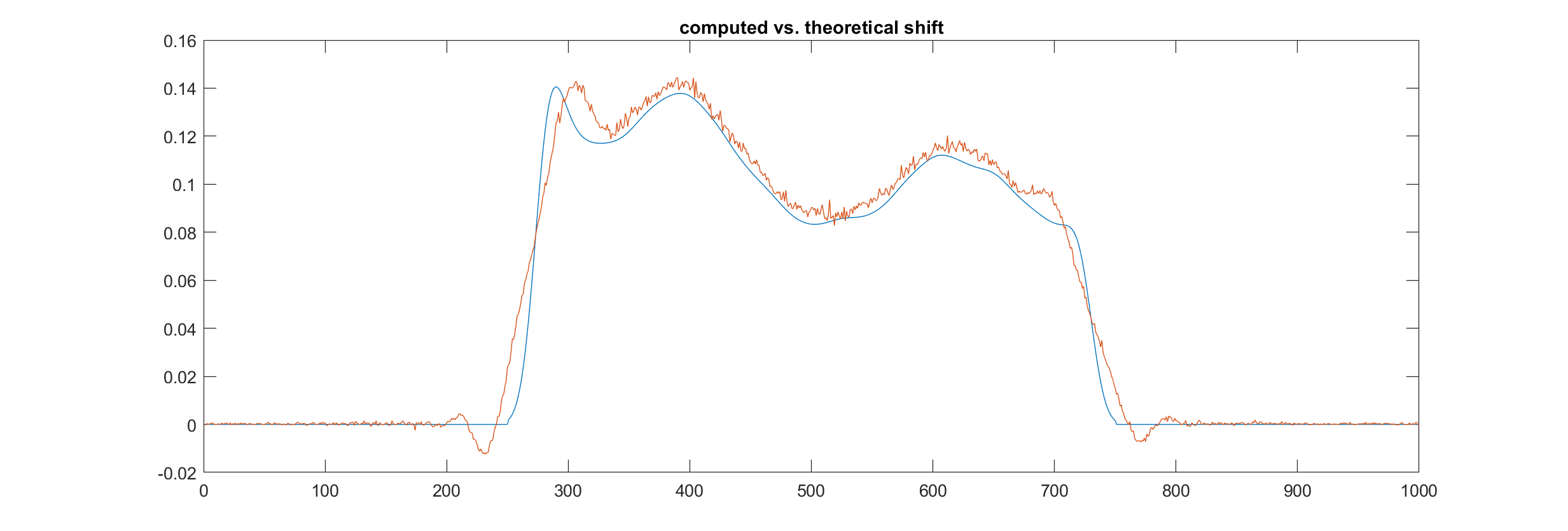}
\end{center}
\caption{\small $K=5$, $h=0.0025$, $10\%$ noise added to the initial condition.  The theoretical data (in blue) vs.\ the computed one (in red).   
}
\label{fig_K5_h0025_noise2}
\end{figure}

\subsection{Recovery}
We choose a $2,000\times 2,000$ grid discretizing $[-1,1]^2$, with $h=0.0025$, $K=1$ and the phantom shown below. We take the data by rotating the image by 1 degree from $0$ to $179$ degrees. Then we extract $X\alpha$ and invert $X$. The result is shown in Figure~\ref{fig_recovery}. The non-linearity is included in $[-0.4,0.4]^2$, and this is what we plot. One can see that the resolution is relatively low, and the reconstructed image is blurred more than the original. The wavelength is approximately $2\pi h\approx0.016$, so the displayed squares are approximately $51\times 51$ in wavelength units. Full analysis of the resolution of this method however is well beyond the scope of this paper. If we choose $\alpha$ with sharper edges (but still not with jumps), we observe diffraction effects of the waves from them. When $h\ll1$ those effects would disappear but the computational cost then becomes too high. 

 \begin{figure}[ht]
\begin{center}
	\includegraphics[trim = 0 50 0 0, scale=0.2 ]{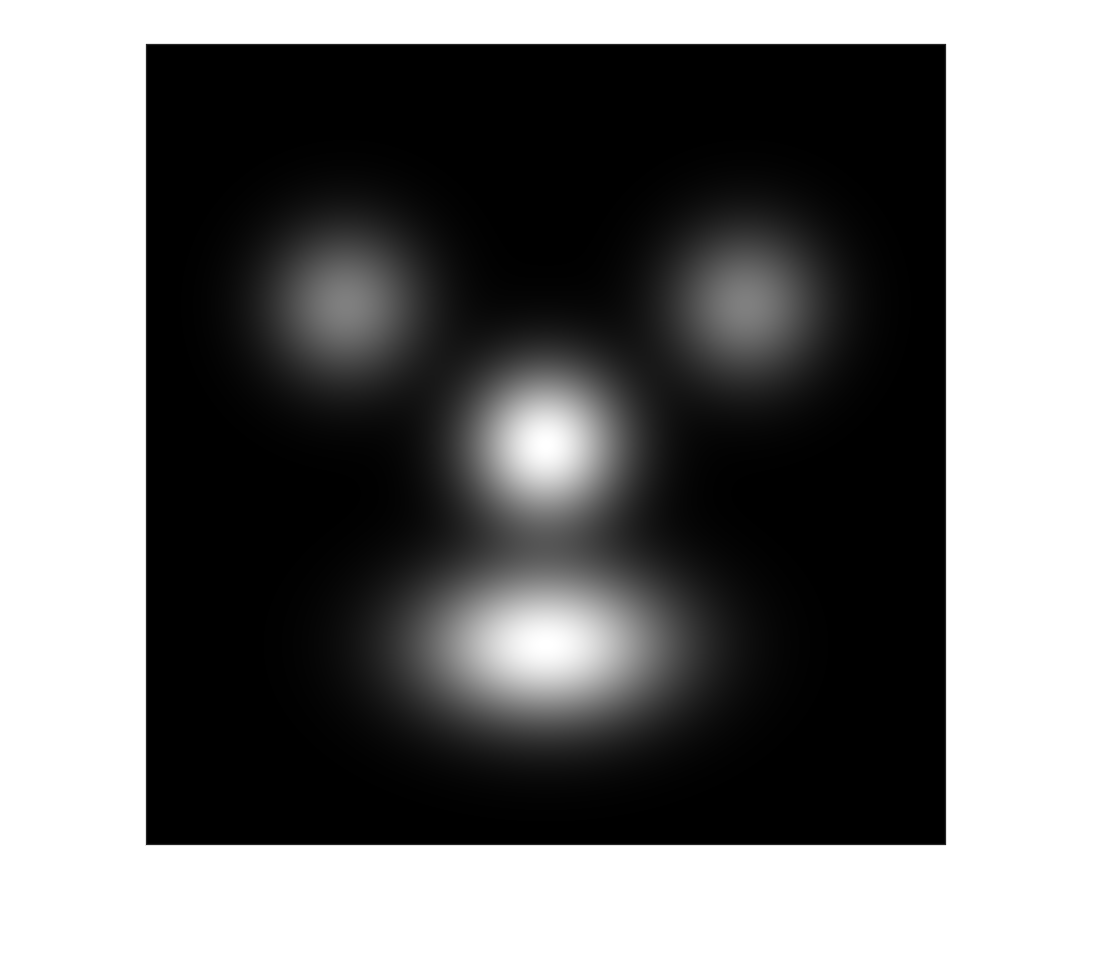}
\includegraphics[trim = 0 50 0 0 , scale=0.2]{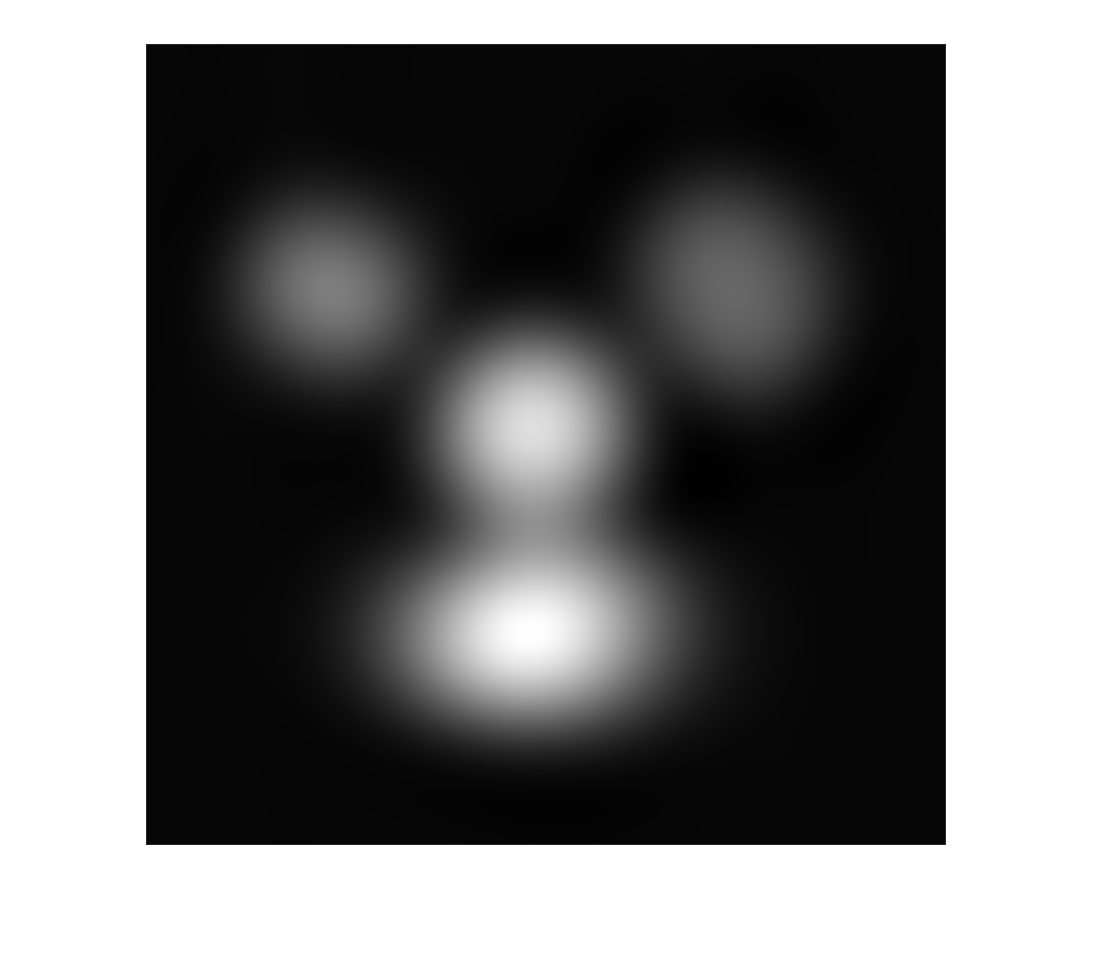}
\end{center}
\caption{\small $K=1$, $h=0.0025$, original on the left, recovery on the right.   
}
\label{fig_recovery}
\end{figure}

%\newpage
\section{Probing with real valued waves}\label{sec_real}
\subsection{Geometric optics} 
When the Cauchy data $\mathbf{u}(0)$ is real valued, the solution is real and the PDE can be written as 
\[
u_{tt}-\Delta u+ \alpha(x) u^3=0,\quad (t,x)\in \R_t\times\R_x^n.
\]
In some works, this is the form of the semilinear wave equation with cubic non-linearities. One may think of real valued solutions being the physical ones in certain applications. We take incoming waves of the kind
\be{10a'}
u_\textrm{in}= \cos\frac{-t+x\cdot\omega}{h} h^{-1/2}\chi(-t+x\cdot\omega), \quad \omega\in S^{n-1}.
\ee
Since there is no principle of superposition, we cannot take $\cos$, then $\sin$ functions above and generate the solution with the complex incoming wave \r{10a}. It is known that in this case we get non-trivial harmonics corresponding to values of $k$ below different from $|k|=1$,  see for example \cite{Metivier-Notes}. The cubic type of non-linearity we have generates odd harmonics only. In Figure~\ref{fig_spectrum} we plot the power spectrum of a vertical slice of $u(1,x)$ with a non-linearity $\alpha$ as in \r{al}; one can clearly see the peaks created by the harmonics with $|k|=1,3,5$. The higher order ones cannot be realized with the chosen level of discretization.  

\begin{figure}[ht]
\begin{center}
	\includegraphics[trim = 0 50 0 50 , scale=0.28]{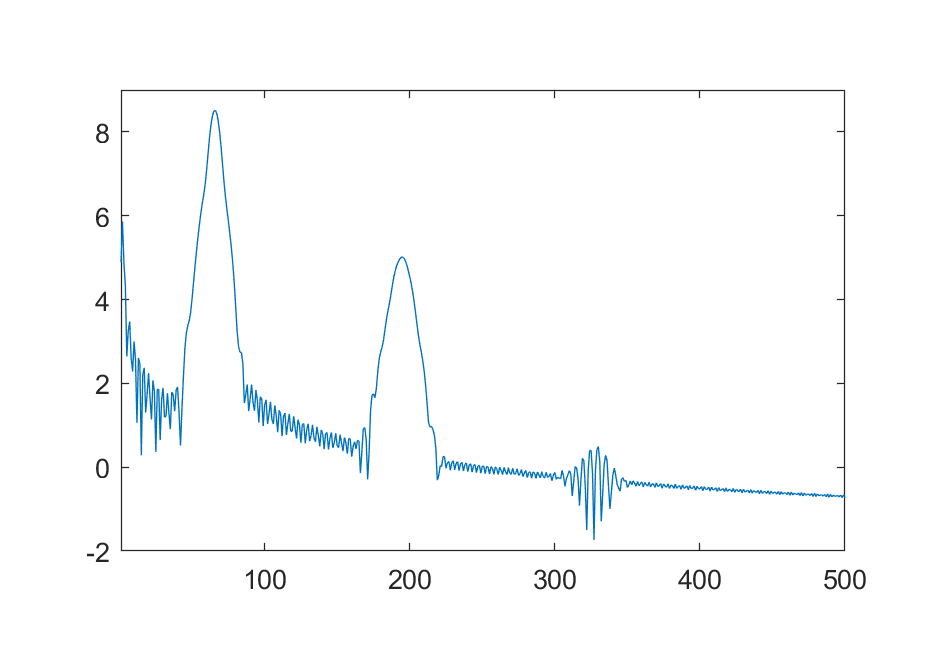}
\end{center}
\caption{\small  $\log |\hat u(t=1,x_1=0,\,\cdot\,)|$, i.e., $u|_{t=0}$ is restricted first to the vertical line $x_1=0$ through the center, with $\alpha$ a Gaussian as in \r{al}. One can see a peak at the main frequency $k\approx 66.7$ corresponding to the incoming one and the harmonics $3k$ and $5k$.
}
\label{fig_spectrum}
\end{figure}

The weakly non-linear ansatz in this case is of the type $u=U(\phi/h,t, x,\omega, h)$ with $U$ periodic in its first variable of period, say, $2\pi$, see, e.g., \cite{Metivier-Notes}. Expanding the oscillatory factor in Fourier series, we assume 
\be{R1}
u\sim  \sum_{k\in \mathbf{Z}\setminus 0}  e^{\i k \phi/h}h^{-1/2} a^{(k)}(t,x,\omega,h),
\ee
compare with \r{2}, where each $a^{(k)}$ has an expansion $a^{(k)} = a^{(k)}_0+h a^{(k)}_1+\dots$. Using \r{3}, we see that we can take the phase $\phi$ satisfying the same eikonal equation \r{5}; and we take the phase $\phi=-t+x\cdot\omega$ because this is what is dictated by the initial condition \r{10a'}. Plug \r{R1} into the equivalent of \r{3} (no absolute value in $|u|^2$ there) to get 
\be{exp}
\begin{split}
-\sum_k &e^{\i k\phi/h} 2\i k\frac{\d}{\d s}\left(  a^{(k)}_0+h a^{(k)}_1+\dots\right)  +\alpha  \Big( \sum_k  e^{\i k\phi/h} \Big( a^{(k)}_0+h a^{(k)}_1+\dots\Big)\Big)^3 \\
&+ h\sum_k e^{\i k\phi/h}  \Box \left(  a^{(k)}_0+h a^{(k)}_1+\dots\right)=0.
\end{split}
\ee

To simplify the notation, we will set  $\a_{k}=a_0^{(k)}$. 
The first order transport equation derived from \r{exp}, in the characteristic coordinates $(s,y)$, see \r{10} and \r{11},  takes the form
\be{9'}
2k\frac{\d}{\d s}\a_{k}+\i\alpha \!\!\! \!\!\! \!\! \sum_{k_1+k_2+k_3=k} \a_{k_1} \a_{k_2} \a_{k_3}  =0, \quad k=0, \pm1, \dots,
\ee
with initial conditions
\be{R2}
\a_{1}=\a_{-1}=\frac{A}2, \quad \a_{k}=0, \quad k\not=-1,1, \quad \text{for  $s\ll0$}. 
\ee
Multiply \r{9'} by $\a_{-k}$ and then do the same with $k$ and $-k$ swapped, subtract and sum up to get
\[
\begin{split}
\sum_{k=1}^\infty 2 \frac{\d}{\d s}\left( k \a_{k} \a_{-k}\right) &= \ \ \ 
\i\alpha \!\!\! \!\!\! \!\!  \!\!\sum_{k_1+k_2+k_3-k=0,\, k>0}\a_{k_1} \a_{k_2} \a_{k_3} \a_{-k} \\
& {}\quad\ \ \  -\i\alpha \!\!\! \!\!\! \!\! \!   \!\!\sum_{k_1+k_2+k_3+k=0, \,k<0} \a_{k_1} \a_{k_2} \a_{k_3} \a_{k} \\
&=0. 
\end{split}
\]
In the last step we made the change of variables $k'=-k$ in the last sum. If we assume uniqueness, we must have $\a_{k} = \bar{\mathsf{a}}_{-k}$ as can be expected because the solution \r{R1} should be real valued. Indeed, take the complex conjugate of \r{9'}, replace $k$ by $-k$; and we get the same system of equations for $\bar{\mathsf{a}}_{-k}$. Then we get the conservation law
\be{R2a}
\sum_{k=1}^\infty   k \big|\a_{k}\big|^2 = \sum_{k=1}^\infty   k \big|\a_{k}\big|_{t=0}\big|^2=\frac{A^2}4. 
\ee
This means that we can look for a solution in what is essentially an $H^{1/2}$ space, see Appendix~\ref{sec_transp}, where we analyze the transport equations. 

\subsection{The inverse problem with real waves} 
We can always assume that the initial condition in \r{R2} is given at $s=0$. 
It is enough to solve \r{9'} with $\alpha=1$. Indeed, let that solution be $\tilde{\mathsf{a}}$; then $\a (s) = \tilde{\mathsf{a}}(\mathcal{R}(s))$ solves \r{9'} with $\mathcal{R}(s) = \int_{0}^s \alpha(\sigma)\,\d\sigma$. In particular, if $\alpha=0$, we get $\a (s) = \tilde{\mathsf{a}}(0)$, which is the vector with only two non-zero components as in \r{R2}. Notice that for $s=T$, the value of $s$ at which the ray has exited $\overline{B(0,R)}$, 
$\mathcal{R}(T)=\int\alpha$ is the X-ray  transform of $\alpha$ in the direction of $\omega$. In fact, the measurements provide $\a_k(T)$ for all $k$. They equal $\tilde a_k(\mathcal{R}(T))$. If this function is invertible for at least one $k$, we recover $\mathcal{R}(T)$, and therefore the X-ray transform of $\alpha$. By the ODE system \r{9'} and  \r{R2a} (or by uniqueness), we cannot have $\d\tilde \a_k/\d s=0$ for all $k$ and a fixed $t$ (recall that $\alpha=1$ for $\tilde a$). Therefore, at least locally, we can invert one of those functions. 
%In typical applications however, one would want to invert $\a_1$. 
We show below that this can be done for small $\alpha$. 

Note that we can view the argument above as a version of the phase shift we encountered  with complex probing waves. The change of variables $s\mapsto \mathcal{R}(s)$ along each characteristic line, having non-negative derivative $\alpha$, deforms $\tilde \a(s)$ to  $\a (s) = \tilde{\mathsf{a}}(\mathcal{R}(s))$. We cannot expect each $\a_k$ to have a constant module as in \r{aoc}, then this change will shift both the phase and the amplitude. 

Since it is not clear if \r{9'} can be solved explicitly with $\alpha=1$, we will linearize the solution near $\alpha=0$. If $\alpha=0$, the solution is $\a ^0=(Ae_{-1}+ Ae_1)/2$ with the standard notation $e_k$ for the vector with all components zero except at the $k$-th position, which is $1$. The  linearization $\delta \a$ at $\alpha=0$ is given by
\[
\delta\a_{k} = -\frac{\i }{2k} \int_0^s \alpha(\sigma) \,\d\sigma \sum_{k_1+k_2+k_3=k} \a_{k_1}^0 \a_{k_2}^0 \a_{k_3}  ^0.
\]
The only non-zero coefficients correspond to $k=-3,-1,1,3$. It is easy to see that
\[
\delta\a_{1}= \delta\bar{\mathsf{a}}_{-1}= -\frac{3A^3}8 \frac{\i }{2} \int_0^s\alpha(\sigma) \,\d\sigma,\quad 
\delta\a_{3}= \delta\bar{\mathsf{a}}_{-3}= -\frac{A^3}8 \frac{\i }{6} \int_0^s \alpha(\sigma) \,\d\sigma.
\]
Therefore, the first order approximation to the r.h.s.\ of \r{R1} without the factor $h^{-1/2}$ about $\alpha=0$ is
\be{R2c}
h^{1/2}  u_1: =A\cos\frac{-t+x\cdot\omega}{h} + \frac{3A^3}8 \mathcal{R} \sin \frac{-t+x\cdot\omega}{h} + \frac{A^3}{24}\mathcal{R} \sin\frac{3(-t+x\cdot\omega)}{h},
\ee
where $\mathcal{R}= \int_{-\infty}^0  \alpha(x+s\omega) \,\d s$, and $A=\chi(-t+x\cdot\omega)$, compare to \r{11}. The first term in \r{R2c} is just the linear solution $u_L$, therefore the linearization $\delta u$ is $\delta u=u_1-u_L$. 

Motivated by this, we take our data to be $\delta u$ at $t=T$, and $x\cdot\omega=T+h\pi/2$ to get $K^3\mathcal{R} /3$ (up to an $O(h)$ error), with $K=\chi(0)$ as before, which  recovers $\mathcal{R}$. For numerical purposes, it is better to integrate $|\delta u|^2|_{t=T}$ over an $h$-independent  range of $x$ values lying away from $\supp\alpha$:
 \be{Data}
\begin{split}
\textrm{Data}^2(z,\omega)&=  h\int_{|s-T|\le\delta} \big| \delta u (T,z+s\omega)\big|^2\,\d s   \\
& = \mathcal{R}^2\int \chi^6(\sigma) \left(  \frac{3^2}{8^2} \sin^2\frac{\sigma}h  + \frac1{24^2}\sin^2\frac{3\sigma}h \right)\d\sigma +O(h^\infty), 
\quad z\in \omega^\perp. 
\end{split}
\ee
Here $\mathcal{R}=X \alpha$ is  the X-ray transform of $\alpha$ in the direction $\omega$. 
The  product $C\sin(\sigma/h) \sin(3\sigma/h)$ is missing because it is $O(h^\infty)$ as it can be seen by writing it as a difference  of cosine functions and integrating by parts. Now, since $\sin^2\beta=(1-\cos(2\beta))/2$, the contribution of each sine squared term above would be $1/2+O(h^\infty)$, hence
\be{R2f}
\begin{split}
\textrm{Data}(z,\omega) = C| X\alpha(z,\omega)|+ O(h^\infty), \quad C:=\frac{\sqrt{41 }  }{24}\Big(\int\chi^6(\sigma)\,\d\sigma\Big)^{1/2},  
\quad z\in \omega^\perp. 
\end{split}
\ee
%We have $\sqrt{41}/24\approx 0.2668$. 
If we filter out the first harmonic in the data only, we need to change the factor $\sqrt{41}/24\approx 0.2668$ in the expression for $C$ in \r{R2f} to $3\sqrt{2}/16\approx 0.2652$.

We formulate this as a theorem now.

\begin{theorem}\label{thm_2}
Let \r{Data} be the integrated measurement of the linearized solution $\delta u$ as $0\le \alpha\to 0$. Then we can recover the X-ray transform $X\alpha$ in the direction $\omega$ by \r{R2f}, as $h\to0$. If we know \r{Data} for all unit $\omega$ in the limit $h\to0$, we can recover $\alpha$ as well. 
\end{theorem}

To connect this approximation with the one in section~\ref{sec_2}, we will disregard for a moment the formation of higher order harmonics, and very formally, consider the $k=-1,1$ ones only. Then the only possible combinations in the sum in \r{9'} are $(-1,1,1)$, $(1,-1,1)$ and $(1,1,-1)$.  Similarly, if $k=-1$ we have those three combinations with the opposite signs. Therefore, the $|k|=1$ harmonics have amplitudes solving
\[
\begin{split}
2\frac{\d}{\d s}a_0^{(1)}+3\i \alpha \left(a_0^{(1)}\right)^2 a_0^{(-1)} &=0,\\
-2\frac{\d}{\d s}a_0^{(-1)}+3\i \alpha \left(a_0^{(-1)}\right)^2 a_0^{(1)}& =0.
\end{split}
\]
 Multiply the first equation by $a_0^{(-1)}$, the second by $a_0^{(1)}$ and subtract them to get $a_0^{(1)}a_0^{(-1)}=\text{const.}$, and by \r{R2}, this constant is $A^2/4$. Therefore, 
 \[
\begin{split}
2\frac{\d}{\d s}a_0^{(1)}+3\i \alpha \frac{A^2}4a_0^{(1)}  &=0,\\
-2\frac{\d}{\d s}a_0^{(-1)}+3\i \alpha \frac{A^2}4 a_0^{(-1)}   & =0.
\end{split}
\]
Using the initial conditions \r{R2}, we get
\[
a_0^{(1)}(s) =\bar a_0^{(1)}(s)= \frac{A}2 \exp\left( -\i\frac{3A^2}8 \int_{-\infty}^s \alpha(\sigma)\,\d \sigma\right).
\]
Therefore, the leading term in \r{R1} corresponding to $k=-1,1$ (the first harmonic) is
\[
u^{(1)}_0 := Ah^{-1/2} \cos\left(\frac{-t+x\cdot\omega}{h} -\frac{3A^2}8 \int_{-\infty}^0 \alpha(x+s\omega)\,\d s\right).
\]
Compare this to \r{R2c}: the first two terms there on the right are well approximated when $0\le \alpha\ll1$ (then  $0\le \mathcal{R}\ll1$ as well). 

 Numerical experiments show that the approximation \r{R2f} is very accurate.

\section{Comparison with other works} We compare our approach with other works of recovery a non-linear term in a semilinear wave equation. \label{sec_comparison}

To put this in a more general context, we notice first that our approach in section~\ref{sec_ps} works if $\Delta$ in \r{1} is the Laplacian associated with some Riemannian metric under some geometric assumptions, for example lack of caustics of the probing waves. Also, the non-linearity could be of different order, like $\alpha u^m$ or $\alpha |u|^{m-1}u$ with $m\ge2$ integer, see also Remark~\ref{rem_g}. On the other hand, a global solution for not necessarily small initial conditions may not exist in some of those more general cases, e.g., for $\alpha$ taking negative values (see also the example in \cite[section~X.13]{Reed-Simon2}) or for $m$ even. The existence of a positive energy functional preserved by the dynamics is essential for the global existence. For general $m$, the weakly non-linear regime happens when $p-1=mp$, see section~\ref{sec_wnl}, i.e., when $p=(1-m)^{-1}$. We are in the linear regime when $p> (1-m)^{-1}$ but to have the non-linearity affecting the terms with relative strength $\sim h$ (rather that $\sim h^\eps$, $0<\eps\ll1$), we need $p\ge 0$, see section~\ref{sec_l}.

The first work, to our knowledge, about recovery of a non-linear term in a semilinear wave equation is \cite{KLU-18}, see also \cite{LassasUW_2016}, and they also recover  the metric. There are several extensions of these works, see, e.g., \cite{Hintz-U-19}. The non-linearity in \cite{LassasUW_2016} is of type $H(x,u)= \alpha_2(x)u^2+\alpha_3(x)u^3+\dots$ in $\R^3$. The main idea there is to collide four ``plane'' waves in general position which in space-time meet at one point. By non-linear interaction, that point acts as a source emitting a spherical wave with a weak singularity (after all, the solution has to be smooth enough for well posedness) of amplitude carrying information about $H$ at that source. More precisely, they have a source $f = \eps_1f_1+ \eps_2 f_2+\eps_3f_3+\eps_4 f_4$ supported in $t\ge0$ appearing in the r.h.s.\ of \r{1}, and zero initial data.  They expand the solution in Taylor series w.r.t.\ $(\eps_1,\eps_2,\eps_3, \eps_4)$  near zero, and show that the information needed to recover $\alpha_2$ is carried out by the (weak) singularities of $\partial_{\eps_1\eps_2 \eps_3\eps_4}u$ at $\eps_1=\eps_2 =\eps_3=\eps_4=0$. Assume for simplicity that all those $\eps_j$'s are of the same size $\eps$. This approach is equivalent to taking two consecutive limits: $\eps\to0$ first (after dividing by $\eps^4$), and $h\to0$ next, if we think about the singular waves as limits of high-frequency ones, i.e., if we think of $u$ or the source $f$ having form of the kind \r{2aa}. The propagation of the waves before the interaction is in the linear regime, in fact they are linear. The useful observed data is $O(\eps^4)$, compared to the  much stronger overall signal $\sim\eps$, and its singularity is weak (required for well-posedness), making it hard to measure. On the other hand, one can create a singularity moving in direction different than the incoming ones. 

The singularities generated  by the interaction of four waves in \cite{KLU-18}, see also \cite{LassasUW_2016},  are not necessarily singularities of the solution of the nonlinear wave equation; they are singularities of a linearization of the nonlinear equation.  For instance the interaction of four conormal waves would produce an open set of singularities of the solution of the wave equation.  The analysis of the  singularities produced by transversal interaction of three semilinear conormal waves  which solve $\square u= f(t,x,u)$ goes back to the work of Bony \cite{Bony1,Bony2} and Melrose and Ritter \cite{MelRit}. Their results essentially say that the new singularities arising from the interaction of three transversal conormal waves is contained on the surface formed by the projection of the bicharacteristics emanating from the set where the three  waves meet.  The principal symbol of the singularities of the new wave was recently computed in \cite{SaWang} in two space dimensions, and in \cite{Antonio2020} in arbitrary dimensions, and it determines $\p_u^3 f(t,x,u(t,x))|_{u=0}$ for all $(t,x)$ on the set where the waves interact. In particular, in the case of a cubic non-linearity, $f(t,x,u)=\alpha(t,x) u^3$,  this gives $\alpha(t,x)$,  for all $(t,x)$ on the set where the three waves interact. By varying  the interacting waves, one can determine $\alpha(t,x)$ on a larger set. 

In \cite{OSSU-principal}, one uses (linear) Gaussian beams which look like \r{2aa} but $\phi$ is complex-valued. Unlike in  the previous works, there is an amplitude $\eps$ and a frequency $1/h$ now. It is shown that three beams are enough in any dimension, as long as the amplitude $\eps$ is small enough. The proof still requires to take the limit $\partial_{\eps_1\eps_2 \eps_3}u$ at  $\eps_1=\eps_2 =\eps_3=0$ \textit{and then} $h\to0$. If we want to keep $\eps$ and $h$ dependent in the spirit of this work, it is enough to take $\eps=1$ (if a global solution exists). Then the Gaussian beams would propagate in the linear regime, and the non-linearity affects the lower order terms only. One can easily see however that the signal generated by the collision (in the resonant case) would be $O(h)$, compared to the overall signal $\sim 1$. This is a direction we do not pursue. 

In this work, we use solutions with amplitude $\eps= h^{-1/2}$, i.e., $\eps$ is not small anymore. They propagate (weakly) non-linearly along their way. We do not collide them but we can think of the non-linear propagation as a continuous self-interaction. The observed signal carrying the useful info is still of the same amplitude, $h^{-1/2}$ which would raise it above the background noise level and makes it comparable to the probing signal. Adding noise to $\Lambda(u_\textrm{in})$ in \r{eq:main_thm}, for example, would multiply its contribution to \r{eq:main_thm} by   $h^{1/2}$ (which will weaken it) and then we take its phase. 

Finally, we want to mention that our PDE \r{1} is scaled for convenience but the physical choice could require multiplying $\Box$ in \r{1} by $h$, see \cite[sec.~4]{Metivier-Notes}. If $v$ solves $h\Box v+\alpha |v|^2v=0$, then $u=h^{-1/2}v$ solves \r{1}; therefore the weakly non-linear solutions correspond to $v\sim 1$, not $\sim h^{-1/2}$.

\appendix

\section{Solving the transport equations} \label{sec_transp}

We start with the leading order equations  \r{9'}. If
\[
\a  (s) = (\ldots, \a_{-k}(s), \ldots, \a_{-2}(s), \a_{-1}(s), \a_0(s), \a_{1}(s), \a_{2}(s),\ldots, \a_{k}(s), \ldots),
\]
we say that
\[
\begin{aligned}
\a (s) &\in l^p, \;\ p\in [1,\infty), & \text{ if } \|\a (s)\|_{l^p}^p& := \sum_{k=-\infty}^\infty |\a_k|^p<\infty, \\
\a (s) &\in l^\infty, \;\  & \text{ if } \|\a (s)\|_{l^\infty}& := \sup_k |\a_k|<\infty,   \\
\a(s) &\in {\mathsf{h}}^m, \;\ m \in [0,\infty),  &\text{ if } \|\a(s)\|_{\mathsf{h}^m}^2& := \sum_{k=-\infty}^\infty |k|^{2m} |\a_k|^2<\infty.
\end{aligned}
\]
When $k=0$, the weight in the definition of $\|\a(s)\|_{\mathsf{h}^m}^2$ vanishes (and it is undefined when $m=0$) which is not the standard choice but our sequences will have non-zero terms when the index is odd only. 
If  $I\subset \mathbb{R}$ is an interval, we say that $\a \in C(I, l^p)$ or $\a \in C(I, {\mathsf{h}}^m)$ if $\a (s)$ is continuous and 
\be{AR2}
\begin{split}
\|\a \|_{C(I, l^p)}= \sup_{s\in I} \|\a (s)\|_{l^p} <\infty, \text{ or } \|\a \|_{C(I, {\mathsf{h}}^m)}= \sup_{s\in I} \|\a (s)\|_{{\mathsf{h}}^m} <\infty.
\end{split}
\ee
The discrete convolution $\a * \mathsf{b}$ is defined to be the sequence such that
\[
(\a * \mathsf{b})_k  = \sum_{k_1+k_2=k}  \a_{k_1} \mathsf{b}_{k_2},
\]
 and in the case of three terms $\a * \mathsf{b}*\mathsf{c}$ is defined such that
\[\
(\a* \mathsf{b}* \mathsf{c})_k = \sum_{k_1+k_2+k_3=k}  \a_{k_1}\mathsf{b}_{k_2} \mathsf{c}_{k_3}.
\]

 Young's inequality holds for discrete convolutions, since it holds for certain groups including $\mathbb{Z}$,  but the standard proof which is an application of H\"older's inequality,  also works in this case, and
 \be{AR4}
 \begin{split}
 & \|\mathsf{a}*\mathsf{b}\|_{l^q}\le \|{\mathsf{a}}\|_{l^{p_1}} \|{\mathsf{b}}\|_{l^{p_2}}, \text{ for }  1+\frac1q= \frac1{p_1}+\frac1{p_2}, \;\ p_1, p_2, q \in [1,\infty], \\
  \|\mathsf{a}*\mathsf{b}*\mathsf{c}\|_{l^q} & \le 
\|{\mathsf{a}}\|_{l^{p_1}} \|{\mathsf{b}}\|_{l^{p_2}} \|{\mathsf{c}}\|_{l^{p_3}}, \text{ for }  1+\frac1q= \frac1{p_1}+\frac1{p_2} +\frac1{p_3}, \;\ p_1, p_2, p_3, q \in [1,\infty]. \\
\end{split}
 \ee
In particular,  if we choose $p_1=p_2=p_3=q=2$ in \r{AR4}, we have the following inequality
\be{AR5}
\|\a *\a *\a \|_{l^2} \le \|\mathsf{a}\|_{l^2}^3.
\ee
This defines the map
\begin{gather*}
T: l^2 \longrightarrow l^2, \quad 
T(\a )= \a *\a *\a .
\end{gather*}

We can write \r{9'}, \r{R2}  as
\be{R7}
\ \a_{k}(s) =  \a_{k}(0)- \frac{\i}{2 k} \int_0^s\alpha(\sigma) (\a *\a *\a )_k(\sigma)\,\d \sigma  , \quad k=\pm1,  \pm2, \dots,
\ee
with $\a_{k}(0)=Ae_{-1}/2+Ae_1/2$. Since only odd values of $k$ will be present, we have  $k\not=0$. 

We want to use Picard iteration and solve \r{R7} locally, if the initial data is  in $l^2,$ and  in view of the conservation law \r{R2a}, show that the solution is in fact global if the initial data is in ${\mathsf{h}}^{\frac12}$.    We define  the map
\[
(\Phi(\mathsf{a}))_k(s)= \mathsf{a}_k(0)- \frac{\i}{2k} \int_0^s\alpha(\sigma) (\a *\a *\a )_k(\sigma)\,\d \sigma  , \quad k=\pm1, \pm 2, \dots.
\]
In view of \r{AR5}, on the interval $[0,s_0]$,  we have
\[
\|\Phi(\a )- \a (0)\|_{C([0,s_0],l^2)}\le \frac{s_0}{2} \|\alpha\|_{L^\infty} \|\a \|_{C([0,s_0],l^2)}^3.
\]
If $\mathcal{H}(\a (0),M)$ denotes the closed ball of radius $M$ centered at $\a (0)$  in the space $C([0,s_0]; l^2)$ 
equipped with the norm \r{AR2}, then as long as $s_0$ is such that 
\be{Cond1}
\frac{1}{2}\|\alpha\|_{L^\infty} (\|\a (0)\|_{l^2}+ M)^3 s_0 \leq M,
\ee
then $\|\Phi(\a )- \a (0)\| \leq M$ and so
\[
\Phi: \mathcal{H}(\a (0),M) \longmapsto \mathcal{H}(\a (0),M).
\]

On the other hand, since
\[
\mathsf{a}*\mathsf{a}*\mathsf{a}- \mathsf{b}*\mathsf{b}*\mathsf{b}= (\mathsf{a}-\mathsf{b})*\mathsf{a}*\mathsf{a}+
(\mathsf{a}-\mathsf{b})*\mathsf{b}*\mathsf{a}+(\mathsf{a}-\mathsf{b})*\mathsf{b}*\mathsf{b},
\]
it follows from \r{AR5} that if $\mathsf{a}, \mathsf{b}\in C([0,s_0]; l^2)$;  then
\[
\begin{split}
\|\Phi(\a )- \Phi(\mathsf{b})\|_{C([0,s_0],l^2)} \le &\\
\frac{s_0}{2} \|\alpha\|_{L^\infty} \|\a -\mathsf{b}\|_{C([0,s_0],l^2)}( \|\a \|_{C([0,s_0],l^2)}^2 & + \|\mathsf{b}\|_{C([0,s_0],l^2)}^2+ \|\mathsf{a}\|_{C([0,s_0],l^2)}\|\mathsf{b}\|_{C([0,s_0],l^2)}).
\end{split}
\]

 And so, if $\mathsf{a}, \mathsf{b}\in \mathcal{H}(\a{}(0),M)$, and if $s_0$ is such that
 \be{Cond2}
 \frac{3s_0}{2} \|\alpha\|_{L^\infty} (M+\|\a (0)\|_{l^2})^2 s_0<1,
 \ee
 the map $\Phi$ is a contraction in $\mathcal{H}(\a (0),M)$,  and therefore \r{R7} has a unique solution 
 $\mathsf{a}(s)\in \mathcal{H}(\a (0),M)\subset C([0,s_0]; l^2)$,  provided $M$ and $s_0$ satisfy \r{Cond1} and \r{Cond2}.  
 
 If in addition we know that the initial data $\a (0)$ is in $\mathsf{h}^{\frac12}$, we will show that we have a unique global solution $\a (s) \in C([0,s_0]; \mathsf{h}^{\frac12})$,  for any $s_0>0$.  To prove this, notice that  once we have a solution 
 $\a (s) \in C([0,s_0]; l^2)$,  then  one can rewrite \r{R7}  as
\[
\sqrt{|k|}\ \a_{k}(s) =  \sqrt{|k|}\ \a_{k}(0)- \sign(k) \frac{\i}{2 \sqrt{|k|}} \int_0^s\alpha(\sigma) (\a *\a *\a )_k(\sigma)\,\d \sigma  , \quad k=\pm 1, \pm 2, \dots,
\]
and in view of \r{AR5},
\[
\| \a \|_{C([0,s_0],\mathsf{h}^{\frac12})} \leq  \| \a (0)\|_{\mathsf{h}^{\frac12}} + \| \a \|_{C([0,s_0],l^2)}^3;
\]
therefore, $\a \in C([0,s_0],h^{\frac12})$,  provided $s_0$ and $M$ satisfy \r{Cond1} and \r{Cond2}. 
Since
\[
\|\a (s)\|_{l^2}\leq  \|\a (s)\|_{\mathsf{h}^{\frac12}},
\]
if we replace the conditions \r{Cond1} and \r{Cond2} with
 \be{Cond3}
 \begin{split}
 \frac{1}{2}\|\alpha\|_{L^\infty} (\|\a (0)\|_{l^2}+ M)^3 s_0 &\le \frac{1}{2}\|\alpha\|_{L^\infty} (M+ \|\a (0)\|_{\mathsf{h}^{\frac12}})^3 s_0 \leq M, \\
 \frac{3}{2} \|\alpha\|_{L^\infty} (M+\|\a (0)\|_{l^2})^2 s_0 &\le \frac{3}{2} \|\alpha\|_{L^\infty} (M+\|\a (0)\|_{\mathsf{h}^{\frac12}})^2 s_0<1,
  \end{split}
  \ee
   then, in view of the conservation law \r{R2a},  
   \[
 \begin{split}
 \frac{1}{2}\|\alpha\|_{L^\infty} (\|\a (s_0)\|_{l^2}+ M)^3 s_0 &\le \frac{1}{2}\|\alpha\|_{L^\infty} (M+ \|\a (s_0)\|_{\mathsf{h}^{\frac12}})^3s_0 \leq M, \\
 \frac{3}{2} \|\alpha\|_{L^\infty} (M+\|\a (s_0)\|_{l^2})^2 s_0&\le \frac{3}{2} \|\alpha\|_{L^\infty} (M+\|\a (s_0)\|_{\mathsf{h}^{\frac12}})^2 s_0<1.
  \end{split}
  \]
This implies that we can solve equation \r{R7} with initial data set at $s=s_0$ instead of $s=0$,  and therefore this shows there exists a unique solution $\a (s)\in C([0,2s_0]; \mathsf{h}^{\frac12})$ to \r{R7}, provided \r{Cond3} is satisfied. This process can be repeated indefinitely and so we have proved the following:

 \begin{proposition}\label{ODE-R} If $\mathsf{a}(0)\in l^{2}$,  then there exists a unique $\a (s)\in C([0,s_0]; l^2)$ which satisfies \r{R7}, as long as $s_0$ and $M$ satisfy \r{Cond1} and \r{Cond2}.   If $\mathsf{a}(0)\in \mathsf{h}^{\frac12}$,  then there exists a unique $\a (s)\in C(\mathbb{R}; \mathsf{h}^{\frac12})$  which satisfies \r{R7}.
 \end{proposition}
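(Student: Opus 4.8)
The plan is to run a standard Picard iteration / contraction mapping argument for the integral equation \r{R7} in the Banach space $C([0,s_0];l^2)$, then bootstrap to $\mathsf{h}^{1/2}$ regularity, and finally use the conservation law \r{R2a} to iterate the local existence time indefinitely and obtain a global solution when the data lies in $\mathsf{h}^{1/2}$. All the analytic inputs are already in place: Young's inequality for discrete convolutions \r{AR4}, its cubic consequence \r{AR5}, and the a priori bound \r{R2a}.

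First I would establish the local result. Define $\Phi$ as in the excerpt and verify the two quantitative estimates already displayed: the self-map bound $\|\Phi(\a)-\a(0)\|_{C([0,s_0];l^2)}\le \tfrac{s_0}{2}\|\alpha\|_{L^\infty}\|\a\|_{C([0,s_0];l^2)}^3$, which follows from \r{AR5} and pulling the sup out of the time integral, and the Lipschitz bound coming from the algebraic identity $\a*\a*\a-\mathsf{b}*\mathsf{b}*\mathsf{b}=(\a-\mathsf{b})*\a*\a+(\a-\mathsf{b})*\mathsf{b}*\a+(\a-\mathsf{b})*\mathsf{b}*\mathsf{b}$ together with \r{AR5}. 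Conditions \r{Cond1} and \r{Cond2} are then exactly what make $\Phi$ a contraction of the closed ball $\mathcal{H}(\a(0),M)\subset C([0,s_0];l^2)$ into itself, so the Banach fixed point theorem yields a unique solution there; this is the first assertion of the proposition. Note $\a(0)\in l^2$ because \r{R2} has only two nonzero entries, so the hypothesis is trivially met for our initial data, but the statement is phrased for general $l^2$ data.

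Next I would upgrade the regularity. Given a solution $\a\in C([0,s_0];l^2)$, multiply \r{R7} by $\sqrt{k}$ and estimate $|\sqrt{k}\,\a_k(s)|\le |\sqrt{k}\,\a_k(0)| + \tfrac{1}{2\sqrt{k}}\int_0^{s_0}\|\alpha\|_{L^\infty}|(\a*\a*\a)_k|\,\d\sigma$; since $1/\sqrt{k}\le 1$, the triple-convolution term is controlled in $l^2$ by $\|\a\|_{C([0,s_0];l^2)}^3$ via \r{AR5}, giving $\|\a\|_{C([0,s_0];\mathsf{h}^{1/2})}\le \|\a(0)\|_{\mathsf{h}^{1/2}} + \|\a\|_{C([0,s_0];l^2)}^3<\infty$. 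Thus if the data is in $\mathsf{h}^{1/2}$ the solution is in $C([0,s_0];\mathsf{h}^{1/2})$. Finally, for the global statement I would observe that the conservation law \r{R2a} gives $\|\a(s)\|_{\mathsf{h}^{1/2}}\equiv \tfrac{A}{2}$ along the whole interval of existence, and since $\|\a(s)\|_{l^2}\le\|\a(s)\|_{\mathsf{h}^{1/2}}$, the smallness conditions \r{Cond1}–\r{Cond2} can be replaced by the stronger \r{Cond3} written entirely in terms of the $\mathsf{h}^{1/2}$ norm; but that norm is the same at $s=s_0$ as at $s=0$. Hence one can restart the iteration at $s=s_0$ with the same $s_0$ and $M$, obtaining a solution on $[0,2s_0]$, then $[0,3s_0]$, and so on, covering any interval $[0,s_0']$ in finitely many steps. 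Uniqueness at each stage is inherited from the local uniqueness, so the pieces agree on overlaps and patch to a unique global solution.

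The only real obstacle is bookkeeping rather than ideas: one must be careful that the iteration step length $s_0$ chosen from \r{Cond3} depends only on $\|\alpha\|_{L^\infty}$, $M$, and the conserved quantity $\|\a(0)\|_{\mathsf{h}^{1/2}}=A/2$ — in particular not on the time at which the iteration is restarted — so that the same $s_0$ works at every stage and finitely many steps suffice to reach an arbitrary $s_0'$. This is exactly why the conservation law \r{R2a} (and the inequality $\|\cdot\|_{l^2}\le\|\cdot\|_{\mathsf{h}^{1/2}}$, which lets the $l^2$-based contraction estimates be majorized by $\mathsf{h}^{1/2}$-based ones) is essential: without an a priori bound on the relevant norm the local existence time could shrink to zero and the continuation argument would fail. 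Everything else — the convolution estimates, the fixed-point theorem, the bootstrap — is routine.
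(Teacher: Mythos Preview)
Your proposal is correct and follows essentially the same route as the paper: contraction in $C([0,s_0];l^2)$ using \r{AR5} and the cubic difference identity under \r{Cond1}--\r{Cond2}, a bootstrap to $\mathsf{h}^{1/2}$ by multiplying \r{R7} by $\sqrt{k}$, and then global continuation by invoking the conservation law \r{R2a} to keep the $\mathsf{h}^{1/2}$ norm fixed so the same step length $s_0$ from \r{Cond3} can be reused indefinitely. One cosmetic point: the conserved value of $\|\a(s)\|_{\mathsf{h}^{1/2}}$ equals $A/2$ only for the particular initial data \r{R2}; for general $\a(0)\in\mathsf{h}^{1/2}$ it is simply $\|\a(0)\|_{\mathsf{h}^{1/2}}$, which is all the iteration argument needs.
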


We can use induction to obtain a similar result for initial data $\mathsf{a}(0)\in \mathsf{h}^{m},$ with $m\in \mathbb{N}$.   Proposition \ref{ODE-R} guarantees that equation \r{R7} has a unique solution $\a (s)\in C(\mathbb{R}; \mathsf{h}^{\frac12})$ and in particular, $\a (s)\in C(\mathbb{R}; l^2)$.  Moreover, by \r{AR5}, the integrand in \r{R7} is in the same space, therefore $\a_k(s)$ is $C^1$ in the $s$ variable with values in $l^2$, i.e., it is a strong solution of \r{9'}.

Recast \r{9'}  as
\[
\frac{\d}{\d s} k\a_{k}(s) =  -\frac{\i   }{2 k } \alpha(s) k(\a *\a *\a )_k(s) , \quad k=\pm1,  \pm2, \dots,
\]
with some initial condition for  $k\a_k(0)\in \mathsf{h}^{m-1}$. 
Notice that
\[
k(\a*\a*\a)= k\sum_{k_1+k_2+k_3=k} \a_{k_1} \a_{k_2} \a_{k_3}= 3 \sum_{k_1+k_2+k_3=k} \a_{k_1} \a_{k_2} k_3 \a_{k_3}= 3 (\a*\a* k \a).
\]
Therefore, the sequence $\tilde{\a}:=k \a_k$ satisfies
\be{R71}
\frac{\d}{\d s}  \tilde \a_{k}(s) =  -\frac{3\i }{2 k}  \alpha(\sigma) (\a *\a *\tilde\a )_k(\sigma) , \quad k=\pm1,  \pm2, \dots
\ee
This is a linear ODE with a generator
\be{R7'}
(A(s) \mathsf{b} )_k(s)=  - \frac{3\i}{2k}  \alpha(s) (\a *\a *\mathsf{b} )_k(s)  , \quad k=\pm1, \pm 2, \dots.
\ee
By \r{AR5}, $A(s)$ is bounded in $l^2$ with a uniformly bounded norm. Then the solution to \r{R7'} is given by a two-parameter solution group $U(t,s)$ applied to the initial condition  $k\a_k(0)\in l^2$, and solves \r{R7'} in strong sense, see \cite[Theorem~X.96]{Reed-Simon2}. That group is obtained by successive iterations of the integrated equation (the Dyson expansion), similarly to what we did above  for the non-linear equation. 

  So we have shown that
  if $\mathsf{a}(0)\in \mathsf{h}^{1},$  then there exists a unique $\a (s)\in C(\mathbb{R}; \mathsf{h}^{1})$ which satisfies \r{R7}.
  
 When $m=2,$ we have
 \[
 \begin{split}
 k^2 (\a*\a*\a)_k &= k^2\sum_{k_1+k_2+k_3=k} a_{k_1} a_{k_2} a_{k_3}=  3k \sum_{k_1+k_2+k_3=k} a_{k_1} a_{k_2} k_3 a_{k_3} \\  & =  6 \sum_{k_1+k_2+k_3=k} a_{k_1} k_2a_{k_2} k_3a_{k_3}+ 3 \sum_{k_1+k_2+k_3=k} a_{k_1} a_{k_2} k_3^2 a_{k_3}.
 \end{split}
 \]
Note that this equality can be interpreted as taking second derivative w.r.t.\ the dual variable of $k$, when the convolution is a product; and this is how we got \r{9'} in the first place, see \r{R1}.

Therefore, the ODE for $k^2\a_k$ is
\be{R72}
\frac{\d}{\d s}  k^2 \a_{k}(s) =- \frac{3\i }{2 k}  \alpha(\sigma) (\a *\a *k^2\a )_k(s)  
 - \frac{6\i }{2 k}  (\a *k \a *k \a )_k(s) , \quad k=\pm1,  \pm2, \dots.
\ee
This is a linear non-homogeneous ODE, similar to \r{R71} (which is homogeneous) and can be solved in $l^2$ used the solution group $U(t,s)$ and Duhamel's principle. The generator is the same as above, and the source terms is continuous in $s$ with values in $l^2$ by the previous step.

We already know that $\a \in C(\mathbb{R}, \mathsf{h}^{1})$ and so the right hand side of \r{R72} is in $C(\mathbb{R};l^2),$ and so the same contraction mapping argument, now applied to the non-homogeneous equation, shows that  if $\mathsf{a}(0)\in \mathsf{h}^{2},$  then there exists a unique $\a (s)\in C(\mathbb{R}; \mathsf{h}^{2})$ which satisfies \r{R7}.

In general if $m$ is a positive integer, $k^m \a$ satisfies
\be{R7m}
\begin{split}
\frac{\d}{\d s} k^m \a_{k}(s)  &=- \frac{3\i }{2 k}  \alpha(\sigma) (\a *\a *|k|^m\a )_k(\sigma) \\
 &\qquad + \frac{\i }{ 2 k} \sum_{\stackrel{m_1+m_2+m_3=m}{m_1>0}}  C_{m_1,m_2,m_3}  \alpha(\sigma) (|k|^{m_1}\a *|k|^{m_2}\a *|k|^{m_3}\a )_k(\sigma) , 
\end{split}
\ee
$k=\pm1,  \pm2, \dots$. 
By induction, the right hand side is in $C(\mathbb{R}, l^2)$ and the argument used above proves the following. 
\begin{proposition}\label{ODE-Rm}   If $m$ is a positive integer and $\mathsf{a}(0)\in \mathsf{h}^{m},$  then there exists a unique $\a (s)\in C(\mathbb{R}; \mathsf{h}^{m})$ which satisfies \r{9'} with that initial condition. 
 \end{proposition}

Next we analyze the higher order transport equations.   We return to the notation $a_0^{(k)}$ instead of $\a_k$. 
By \r{exp}, the next transport equation takes the form
\be{tr-h1}
2k \frac{\d}{\d s} a_1^{(k)} +3\i (a_0* a_0* a_1)^{(k)}= -\i \Box a_0^{(k)}
\ee
with zero initial conditions. Note that the D'Alembertian in the r.h.s.\ is written in the original $(t,x)$ coordinates instead in the characteristic coordinates $(s,y)=( t,x-t\omega)$ but we can always convert it to the latter ones. 
This is a {linear} homogeneous system of ODEs, but before we can solve it we need to show that the right hand side is well defined. We can differentiate \r{R7} to find 
\[
\ \partial_{y_j}\a_{k}(s) =  \partial_{y_j}\a_{k}(0)- \frac{3\i}{2 k} \int_0^s\alpha(\sigma) (\a *\a *\partial_{y_j}\a )_k(\sigma)\,\d \sigma  , \quad k=\pm1,  \pm2, \dots,
\]
and the argument used to prove Proposition \ref{ODE-Rm} shows that $\p_{y_j}\a (s)\in C(\mathbb{R}; \mathsf{h}^{m}),$  provided 
$\p_{y_j}\a (0)\in  \mathsf{h}^{m},$ and $m$ is a positive derivative.  For the  second order derivatives, we have
\be{syst1}
\begin{split}
\  \partial_{y_j}\p_{y_r}\a_{k}(s) &=  \partial_{y_j}\p_{y_r}\a_{k}(0)- \frac{3\i}{2 k} \int_0^s\alpha(\sigma) (\a *\a *\p_{y_j}\partial_{y_r}\a )_k(\sigma)\,\d \sigma  \\
&\qquad  -\frac{9\i}{2 k} \int_0^s\alpha(\sigma) (\a *\p_{y_m}\a *\partial_{y_j}\a )_k(\sigma)\,\d \sigma  , \quad k=\pm1,  \pm2, \dots.
\end{split}
\ee
We have already established that the first order derivatives satisfy 
\[
 \int_0^s\alpha(\sigma) (\a *\p_{y_m}\a *\partial_{y_j}\a )_k(\sigma)\,\d \sigma  \in C(\mathbb{R}; \mathsf{h}^m), 
\]
 and again we apply the argument used in the proof of Proposition  \ref{ODE-Rm} to the non-homogeneous system \r{syst1} and we find that $\p_{y_j}\partial_{y_r}\a \in C(\mathbb{R}; \mathsf{h}^m)$.  We can treat derivatives involving $\partial_s$ in a similar way, differentiating \r{R7} w.r.t.\ $s$. 
 Once we obtain the result for second order derivatives, the same argument proves the result for third order derivatives and so by induction we obtain the following.

\begin{proposition}\label{ODE-DER}   If $m$ is a positive integer and $(\partial_s, \partial_y)^\alpha \mathsf{a}(0)\in \mathsf{h}^{m},$  for $|\alpha|\leq M,$ then there exists a unique $\a (s)\in C(\mathbb{R}; \mathsf{h}^{m})$ such that $(\partial_s, \partial_y)^\alpha \a (s)\in C(\mathbb{R}; \mathsf{h}^{m})$  for $|\alpha|\leq M,$ which satisfies \r{R7}.
 \end{proposition}

In particular, if  $(\partial_s,\partial_y)^\alpha \mathsf{a}(0)\in \mathsf{h}^{m},$ with $|\alpha|\leq 2$ it follows that
$\Box a_0^{(k)} \in C(\mathbb{R}; \mathsf{h}^m),$ and we can once again apply the argument used in the proof of Proposition \ref{ODE-Rm} to show the following. 
\begin{proposition}\label{ODE-A1}   If $m$ is a positive integer and $(\partial_s,\partial_y)^\alpha \mathsf{a}(0)\in \mathsf{h}^{m},$ with $|\alpha|\leq 2,$  then there exists a unique $a_1 (s)\in C(\mathbb{R}; \mathsf{h}^{m})$  which satisfies \r{tr-h1}.
 \end{proposition}

The higher order equations can be treated similarly.

\section{Global existence of solutions and well-posedness}  \label{sec_gl}
We formulate global existence and well-posedness results for the semilinear wave equation \r{1} when $n=2,3$, with initial conditions
\be{G1}
u|_{t=0} = f_1, \quad u_t|_{t=0} = f_2. 
\ee
Related results can be found in  \cite{dodson2018global,  Jorgens1961, Segal63, Ebihara72, Heinz-Wahl1975, Brenner79}. We follow \cite[section~X.13]{Reed-Simon2}, where even more general non-linearities are considered. The theorems below follow from the theorems there, see more specifically pp.~303--310, when $n=3$. We will show that they hold when $n=2$ as well. We will modify the energy space a bit. We are interested in solutions with initial data belonging to the energy space locally only propagating over time interval $[0,T]$ with $R>0$, $T>0$ fixed.   By \cite[Theorem~X.77]{Reed-Simon2}, the speed of propagation does not exceed one. Then it is enough to study initial conditions supported in the ball $B(0,R)$, see the paragraphs following Theorem~\ref{thm_ex}. 
The support of the solution would not expand beyond $B(0,R+T)$. We can just work in the latter ball by imposing zero boundary conditions on its boundary. The solutions we are interested in would never reflect from the boundary. In what follows, we replace $R+T$ by $R$. The energy space then becomes $\mathcal{H} := H_0^1(\Omega)\times L^2(\Omega)$, where $\Omega=B(0,R)$. Then $-\Delta$ is essentially self-adjoint on $C_0^\infty(\Omega)$, extending the Dirichlet Laplacian $-\Delta_D$ on $\Omega$ as a self-adjoint one, having a positive minimal eigenvalue. Then $B=(-\Delta_D)^{1/2}$ is a well defined positive operator on $L^2(\Omega)$. Moreover, $D(B)=H_0^1(\Omega),$ and for every $f\in D(B)$, we have $\|Bf\|=\|\nabla f\|$. 

In \cite[section~X.13]{Reed-Simon2}, there is the Klein-Gordon term $m^2$ added to the Laplacian with $m>0$, then $B=(-\Delta+m^2)^{1/2}$ in $L^2(\R^3)$. All the proofs apply to our situation as well. Another way to make the mass $m=0$ is outlined in Problem~76 there: add $m^2$ to $-\Delta$ and subtract it from the non-linearity $\alpha |u|^2u$. The space dimension is $n=3$ there however. 

The well-posedness for $n=2,3$ also follows from \cite{Heinz-Wahl1975} in a similar way. They consider more general non-linearities as well. 

\subsection{Existence and uniqueness} 
We view \r{1} as on ODE in the energy space $\dot H^1(\Omega) \times L^2(\Omega)$, as it is usually done:
\be{e10nn}
 \mathbf{u}_t = \begin{pmatrix} 0&\Id \\ \Delta &0  \end{pmatrix}\mathbf{u} -\begin{pmatrix} 0\\-\alpha |u|^2u \end{pmatrix}, \quad \mathbf{u}(0)=\mathbf{f}:= \begin{pmatrix} f_1\\ f_2 \end{pmatrix}.
\ee
The space $\dot H^1(\Omega)$ is defined as the completion of $C_0^\infty(\Omega)$ under the norm $\|f\|_{\dot H^1}=\|\nabla f\|_{L^2}$. If $\Omega$ is a bounded domain, then $\dot H^1(\Omega)$ is topologically equivalent to $H_0^1(\Omega)$. We denote by $A$ the matrix operator above. Its domain is $D(A) =  H^2(\Omega) \cap H_0^1(\Omega)\times H_0^1(\Omega)$.

One uses the Picard iteration to solve it. We convert it to an integral equation, i.e., we are seeking the weak solution now:
\be{e11nn}
\mathbf{u}(t) = U_0(t)\begin{pmatrix} f_1\\ f_2 \end{pmatrix} - \int_0^t U_0(t-s) \begin{pmatrix} 0\\ \alpha |u(s)|^2u(s)\end{pmatrix}\d s,
\ee
where $U_0$ is the solution group of the linear equation and we suppressed the dependence on $x$ in $u$. Then we replace $u$ in the non-linearity on the right with $\mathbf{u}_0(t) :=U_0(t)\mathbf f$, compute the first iteration by that formula, then iterate, and take the limit. For this to work at least locally, we need the non-linearity to map $\dot H^1$ to $L^2$  continuously and be Lipschitz there. That would give us a (weak) solution in the energy space only. For a strong solution, one needs the Cauchy data to be in the domain $D(A)$ of the matrix operator in \r{e10nn}, and wants to prove that the solution exists in that space as well. The analysis is similar but in a new space. To prove existence of a global solution, the energy preservation \r{1a} plays a crucial role. 
This is the strategy in \cite[section~X.13]{Reed-Simon2}, as well as in the papers cited above.

We assume $n=2$ or $n=3$ below. We will show that the sequence of lemmas in \cite[section~X.13]{Reed-Simon2} which imply the desired theorem hold when $n=2$ as well but we also allow $n=3$  below. All norms are in $\Omega\subset B(0,R)$.

The first lemma shows that the non-linearity is a continuous operator in the energy space, see also Lemma~\ref{lemma6.2} below.
\begin{lemma}\label{lemma6.1}
For every $u\in C_0^\infty(\Omega)$, we have
\be{e5nn}
\|u\|_{L^6} \le    C \|\nabla u\|_{L^2}.
\ee
\end{lemma}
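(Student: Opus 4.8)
The plan is to recognize \r{e5nn} as the critical (for $n=3$) or subcritical (for $n=2$) Sobolev embedding $\dot H^1\hookrightarrow L^6$, and to split the argument by dimension. First I would recall the Gagliardo--Nirenberg--Sobolev inequality: for $u\in C_0^\infty(\R^n)$,
\[
\|u\|_{L^q(\R^n)}\le C(n,q)\,\|u\|_{L^2(\R^n)}^{1-\theta}\,\|\nabla u\|_{L^2(\R^n)}^{\theta},\qquad \frac1q=\frac12-\frac{\theta}{n},\quad \theta\in[0,1],
\]
with the convention that the endpoint $\theta=1$ (i.e. $q=2n/(n-2)$) is admissible only when $n\ge 3$. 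Since any $u\in C_0^\infty(\Omega)$ extends by zero to an element of $C_0^\infty(\R^n)$, this inequality applies verbatim on $\Omega=B(0,R)$.

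When $n=3$ the critical exponent $2n/(n-2)$ equals $6$, so taking $\theta=1$, $q=6$ gives \r{e5nn} immediately, with no $\|u\|_{L^2}$ term; this is exactly the classical three-dimensional Sobolev inequality $\|u\|_{L^6}\le C\|\nabla u\|_{L^2}$. If one wishes to avoid quoting it, it can be proved by hand: the one-dimensional estimate $|u(x)|\le \tfrac12\int |\partial_i u|\,\d x_i$ together with a Loomis--Whitney/H\"older argument yields $\|v\|_{L^{n/(n-1)}}\le C\|\nabla v\|_{L^1}$, and applying this to $v=|u|^4$ and using Cauchy--Schwarz on $\nabla(|u|^4)$ gives $\|u\|_{L^6}^4\le C\|u\|_{L^6}^3\|\nabla u\|_{L^2}$, hence the claim.

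When $n=2$ the endpoint is unavailable, but taking $q=6$, $\theta=2/3$ gives $\|u\|_{L^6}\le C\|u\|_{L^2}^{1/3}\|\nabla u\|_{L^2}^{2/3}$. This is the only place where the boundedness of $\Omega$ is used: since $u\in C_0^\infty(\Omega)$ with $\Omega$ bounded, the Poincar\'e inequality $\|u\|_{L^2(\Omega)}\le C_R\|\nabla u\|_{L^2(\Omega)}$ holds, and substituting it into the previous bound produces $\|u\|_{L^6}\le C\|\nabla u\|_{L^2}$ with $C$ depending on $R$. Equivalently, one quotes $H_0^1(\Omega)\hookrightarrow L^6(\Omega)$ for a bounded planar domain and the fact that $\|\nabla\,\cdot\,\|_{L^2}$ is an equivalent norm on $H_0^1(\Omega)$.

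Nothing here is genuinely difficult; the single point that deserves care is that in two dimensions $\dot H^1(\R^2)$ does \emph{not} embed into $L^6(\R^2)$, so \r{e5nn} is not a pure statement about $\|\nabla u\|_{L^2}$ on all of $\R^2$ — it genuinely relies on $u$ being supported in the bounded set $\Omega$, which is what permits trading a factor $\|u\|_{L^2}$ for $\|\nabla u\|_{L^2}$ via Poincar\'e. In dimension three no such issue arises and the inequality is scale-invariant.
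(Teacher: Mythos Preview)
Your argument is correct. For $n=3$ it coincides with the paper's: both invoke the critical Sobolev embedding $\dot H^1(\R^3)\hookrightarrow L^6(\R^3)$ directly. For $n=2$ the two proofs diverge slightly in how they exploit the boundedness of $\Omega$. The paper applies the Sobolev inequality $\|u\|_{L^6}\le C\|\nabla u\|_{L^p}$ with $p=6n/(6+n)=3/2<2$, and then uses H\"older on the bounded domain (pairing $\nabla u$ against the constant $1$) to upgrade $\|\nabla u\|_{L^{3/2}}$ to $\|\nabla u\|_{L^2}$ at the cost of an $R$-dependent constant. You instead use Gagliardo--Nirenberg to get $\|u\|_{L^6}\le C\|u\|_{L^2}^{1/3}\|\nabla u\|_{L^2}^{2/3}$ and then Poincar\'e on $u$ itself. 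Both routes are standard and equally short; the paper's version has the small advantage of fitting into a single Sobolev framework that later feeds the Lipschitz estimate in Lemma~\ref{lemma6.3}, while yours makes the role of Poincar\'e (and hence the $R$-dependence of $C$) more transparent. Your closing remark that the $n=2$ constant must depend on $R$ is exactly what the paper notes as well.
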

\begin{proof}
By the Sobolev embedding inequality, 
\be{e1nn}
\|u\|_{L^q}\le C(n,p)\|\nabla u\|_{L^p}, \quad 1\le p< n, \quad 1/q =1/p-1/n.
\ee
Set $n=3$, $p=2$ in \r{e1nn}, then $q=6$; hence 
\be{e5nn6}
 \|u\|_{L^6}\le C \|\nabla u\|_{L^2}, \quad n=3.
\ee
In \cite{Heinz-Wahl1975} one can find a refined argument which covers $n=2$ as well, and is also useful to prove the Lipschitz property below for $n=2,3$. 
 Writing $|u|^6= |u|^2 |u|^4$, we apply H\"older's inequality first
\[
\int \left|\alpha |u|^2u\right|^2 \,\d x  \le \|u\|^4_{L^{4q_1}}\|u\|^2_{L^{2q_2}} ,\quad  1/q_1+1/q_2=1,\quad  q_1, q_2>0.
\]
Take $4q_1=2q_2$; then $q_1=3/2$, $q_2=3$. 
We apply \r{e1nn} with $n=2$, $q=4q_1=2q_2=6$. For the corresponding $p$, we get $p_1=p_2=6n/(6+n)$, which does not exceed $2$ when $n=2,3$.  Another application of H\"older's inequality to the pair of functions $u$ and $1$ implies that \r{e5nn6} still holds for $n=2$ as well. Note that the constant $C$ in \r{e5nn} is independent of $R$ when $n=3$ but it depends on it when $n=2$.  
\end{proof}

Next lemma is a refinement of the previous one. 
\begin{lemma}\label{lemma6.2}
For every $u\in H_0^1(\Omega)$, we have
\[
\|u_1u_2u_3\|_{L^2} \le    C \|\nabla u_1\|_{L^2}\|\nabla u_2\|_{L^2}\|\nabla u_3\|_{L^2}.
\]
\end{lemma}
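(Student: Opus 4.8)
The plan is to reduce the trilinear estimate to a triple application of a single-function bound obtained by H\"older's inequality together with the Sobolev embedding \r{e1nn}. First I would observe that, by density, it suffices to prove the inequality for $u_1,u_2,u_3\in C_0^\infty(\Omega)$, since both sides are continuous in each $u_j$ with respect to the $H_0^1$ norm (this is where Lemma~\ref{lemma6.1} and the equivalence of $\dot H^1(\Omega)$ and $H_0^1(\Omega)$ on a bounded domain enter). Then I would apply the generalized H\"older inequality with three exponents:
\[
\|u_1u_2u_3\|_{L^2}\le \|u_1\|_{L^{q}}\|u_2\|_{L^{q}}\|u_3\|_{L^{q}},\qquad \frac{3}{q}=\frac12,
\]
that is, $q=6$.

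Next I would bound each factor $\|u_j\|_{L^6}$ by $C\|\nabla u_j\|_{L^2}$. When $n=3$ this is exactly \r{e5nn6}, the Gagliardo--Nirenberg--Sobolev inequality with $p=2$. When $n=2$, the exponent $6$ is subcritical, so one cannot use the endpoint embedding directly; instead I would invoke the refined argument already carried out in the proof of Lemma~\ref{lemma6.1} (following \cite{Heinz-Wahl1975}), which establishes $\|u\|_{L^6}\le C\|\nabla u\|_{L^2}$ for $u\in C_0^\infty(\Omega)$ with $\Omega$ bounded, $n=2$ as well, via H\"older interpolation between a lower and a higher Lebesgue norm controlled by \r{e1nn} with $p<2$. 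Combining the three factor bounds with the H\"older step yields
\[
\|u_1u_2u_3\|_{L^2}\le C^3\,\|\nabla u_1\|_{L^2}\|\nabla u_2\|_{L^2}\|\nabla u_3\|_{L^2},
\]
and then passing back from $C_0^\infty(\Omega)$ to $H_0^1(\Omega)$ by density finishes the proof.

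The only genuine subtlety is the case $n=2$: the critical Sobolev exponent for $W^{1,2}$ in two dimensions is $+\infty$ rather than a finite number, so the embedding into $L^6$ is not the endpoint one and the constant necessarily depends on $R=\operatorname{diam}\Omega$ (as already flagged at the end of Lemma~\ref{lemma6.1}). I would handle this exactly as in Lemma~\ref{lemma6.1}: write $\|u\|_{L^6}^6 \le \|u\|_{L^{p_1}}^{a}\|u\|_{L^{p_2}}^{b}$ for suitable exponents with $p_1<2<p_2<\infty$ chosen so that \r{e1nn} applies to control each of $\|u\|_{L^{p_j}}$ by $\|\nabla u\|_{L^{p}}$ with $p<2$, then use the boundedness of $\Omega$ (another H\"older step against the constant function $1$) to replace the $L^p$ norm of the gradient by its $L^2$ norm. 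This is the routine but slightly fiddly part; everything else is a one-line application of H\"older. Since this bookkeeping was spelled out in the previous lemma, I would simply cite it rather than repeat it.
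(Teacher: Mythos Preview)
Your proposal is correct and follows essentially the same approach the paper indicates: the paper's proof simply cites \cite[Lemma~3, X.13]{Reed-Simon2} and notes that Lemma~\ref{lemma6.1} is used, which amounts precisely to your H\"older step $\|u_1u_2u_3\|_{L^2}\le \prod_j\|u_j\|_{L^6}$ followed by three applications of \r{e5nn}. Your extra paragraph sketching the $n=2$ bookkeeping is unnecessary since, as you yourself note, that work is already absorbed into Lemma~\ref{lemma6.1}; citing it (as you propose) is exactly what the paper does.
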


The proof is as in \cite[Lemma~3, X.13]{Reed-Simon2}; in particular, Lemma~\ref{lemma6.1} above is used. 

The next lemma says that non-linearity is a continuous operator in the energy space as it follows directly from Lemma~\ref{lemma6.1}, and that it is Lipschitz there. 
\begin{lemma}\label{lemma6.3}
For every $u_1, u_2\in H_0^1(\Omega)$, we have
\be{eRS1}
\begin{split}
\left\|\alpha |u_1|^2 u_1\right\|_{L^2} &\le    C \|\nabla u_1\|^3_{L^2},\\
\left\| \alpha |u_1|^2 u_1 - \alpha |u_2|^2 u_2\right\|_{L^2} &\le C(u_1,u_2)\|\nabla(u_1-u_2)\|_{L^2}
\end{split}
\ee
with $ C(u_1,u_2) = C_0\left(\|\nabla u_1\|^2 +\|\nabla u_1\|\|\nabla u_2\| +\|\nabla u_2\|^2\right)$ and $C_0>0$ depending on $R$ only. 
\end{lemma}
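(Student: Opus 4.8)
The plan is to reduce both inequalities to the trilinear estimate of Lemma~\ref{lemma6.2}, using that $\alpha\in C_0^\infty$ is a fixed bounded multiplier and that $u_1,u_2,u_1-u_2\in H_0^1(\Omega)$. For the first inequality I would simply write $\big\|\alpha|u_1|^2u_1\big\|_{L^2}\le\|\alpha\|_{L^\infty}\,\big\||u_1|^2u_1\big\|_{L^2}=\|\alpha\|_{L^\infty}\,\|u_1u_1u_1\|_{L^2}$, since $\big||u_1|^2u_1\big|=|u_1|^3$, and then apply Lemma~\ref{lemma6.2} with $u_1=u_2=u_3$ to get $\le C\|\alpha\|_{L^\infty}\|\nabla u_1\|_{L^2}^3$. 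This gives the first line of \r{eRS1}.

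For the second (Lipschitz) inequality the one point requiring a little care is an algebraic identity writing the difference as a sum of terms each of which is a product of exactly three $H_0^1$ factors, with exactly one factor equal to $u_1-u_2$ (or its conjugate) and the other two drawn from $\{u_1,u_2\}$. Concretely, I would use
\begin{equation*}
|u_1|^2u_1-|u_2|^2u_2 = |u_1|^2(u_1-u_2) + u_1u_2\,\overline{(u_1-u_2)} + |u_2|^2(u_1-u_2),
\end{equation*}
which one checks by expanding (the two cross terms $-u_1|u_2|^2$ and $+|u_2|^2u_1$ cancel). Multiplying by $\alpha$, taking $L^2$ norms, pulling out $\|\alpha\|_{L^\infty}$, and applying Lemma~\ref{lemma6.2} to each of the three products — noting $\|\nabla\overline{v}\|_{L^2}=\|\nabla v\|_{L^2}$ so the conjugate factor costs nothing, and that the distinguished factor always contributes $\|\nabla(u_1-u_2)\|_{L^2}$ — yields
\begin{equation*}
\big\|\alpha|u_1|^2u_1-\alpha|u_2|^2u_2\big\|_{L^2}\le C\|\alpha\|_{L^\infty}\big(\|\nabla u_1\|^2+\|\nabla u_1\|\|\nabla u_2\|+\|\nabla u_2\|^2\big)\|\nabla(u_1-u_2)\|_{L^2},
\end{equation*}
which is exactly the claimed bound with $C_0=C\|\alpha\|_{L^\infty}$.

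Regarding the assertion that $C_0$ depends on $R$ only: the only $R$-dependence enters through the constant in Lemma~\ref{lemma6.1} (hence Lemma~\ref{lemma6.2}) in the case $n=2$, where the Sobolev embedding on the bounded domain $\Omega=B(0,R)$ is used after an extra H\"older step; for $n=3$ the constant is in fact $R$-independent, so one gets the stronger statement there. I do not expect any real obstacle here — the proof is essentially bookkeeping — the only thing to get right is the trilinear decomposition above and the observation that Lemma~\ref{lemma6.2} is already stated for $H_0^1(\Omega)$, so no separate density argument is needed.
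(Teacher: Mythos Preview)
Your proposal is correct and is essentially the approach the paper has in mind: the paper does not spell out the argument but simply says it is ``as in \cite[Lemma~4, X.13]{Reed-Simon2}'' and ``based on the previous lemmas,'' which amounts precisely to pulling out $\|\alpha\|_{L^\infty}$, writing the trilinear decomposition of $|u_1|^2u_1-|u_2|^2u_2$, and applying Lemma~\ref{lemma6.2} term by term. Your algebraic identity is correct (one may also use the slightly coarser telescoping $a\bar a a - b\bar b b = (a-b)\bar a a + b(\overline{a-b})a + b\bar b(a-b)$, which gives the same three gradient factors), and your remark on the $R$-dependence via Lemma~\ref{lemma6.1} matches the paper's comment that $C$ depends on $R$ only when $n=2$.
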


The proof is as in \cite[Lemma~4, X.13]{Reed-Simon2} and it is based on the previous lemmas. In particular, it works for $n=2$ as well as in all lemmas so far. 

Next lemma is an analogue of Lemma~\ref{lemma6.3} but the smoothness requirements are one degree higher, so are the conclusions. It corresponds to \cite[Lemma~5, X.13]{Reed-Simon2}.

\begin{lemma}\label{lemma6.4}
For every $u_1, u_2\in  H^2(\Omega)\cap H_0^1(\Omega)$, we have
\[
\begin{split}
\left\|\nabla(\alpha |u_1|^2 u_1)\right\|_{L^2} &\le    C \|\nabla u_1\|_{L^2}^2 \|\Delta  u_1\|_{L^2},\\
\left\| \nabla\left(\alpha |u_1|^2 u_1 - \alpha |u_2|^2 u_2\right) \right\|_{L^2} &\le C(\nabla u_1,\nabla u_2, \Delta u_1,\Delta u_2 )\|\Delta(u_1-u_2)\|_{L^2}
\end{split}
\]
with $ C$ above some continuous function, increasing in each of its arguments. 
\end{lemma}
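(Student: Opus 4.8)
The plan is to mimic the proof of Lemma~\ref{lemma6.3} exactly one derivative level higher, reducing everything to the product estimate in Lemma~\ref{lemma6.2} together with the Sobolev bound of Lemma~\ref{lemma6.1}. First I would expand $\nabla(\alpha|u_1|^2u_1)$ by the product rule into a term where $\nabla$ hits $\alpha$ and a term where it hits $|u_1|^2u_1$. The first is $(\nabla\alpha)|u_1|^2u_1$, which by Lemma~\ref{lemma6.3} is bounded in $L^2$ by $C\|\alpha\|_{C^1}\|\nabla u_1\|_{L^2}^3$, and since $\Omega$ is bounded with $-\Delta_D$ having positive least eigenvalue (Poincar\'e), $\|\nabla u_1\|_{L^2}\le C\|\Delta u_1\|_{L^2}$ for $u_1\in H^2\cap H_0^1$, so this is dominated by the claimed right-hand side. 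The second term is (a constant times) $|u_1|^2\nabla u_1$ up to complex-conjugation bookkeeping of $|u_1|^2u_1 = u_1^2\bar u_1$; writing $\nabla(u_1^2\bar u_1)= 2u_1\bar u_1\nabla u_1 + u_1^2 \nabla\bar u_1$, each summand is a product of three functions, two of which are $u_1$ (or $\bar u_1$) in $H_0^1(\Omega)$ and one of which is a component of $\nabla u_1$.

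The key step is then to apply Lemma~\ref{lemma6.2} to such a triple product. Lemma~\ref{lemma6.2} as stated controls $\|u_1u_2u_3\|_{L^2}$ by $\prod\|\nabla u_i\|_{L^2}$ and hence needs each factor in $H_0^1$. Here the ``third factor'' $\partial_j u_1$ is merely in $L^2$, not $H_0^1$, so I cannot apply Lemma~\ref{lemma6.2} verbatim; instead I would redo the H\"older/Sobolev estimate directly: $\||u_1||u_1||\partial_j u_1|\|_{L^2}\le \|u_1\|_{L^6}\|u_1\|_{L^6}\|\partial_j u_1\|_{L^6}$ when $n\le 3$, since $2/6+2/6+2/6=1$. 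For the first two factors, $\|u_1\|_{L^6}\le C\|\nabla u_1\|_{L^2}$ by Lemma~\ref{lemma6.1}. For the last factor, $\|\partial_j u_1\|_{L^6}\le C\|\nabla \partial_j u_1\|_{L^2}$ would need $n=2$ for the endpoint (with $n=3$ the Sobolev exponent is exactly $6$, borderline, and one must be a little careful, using $\partial_j u_1\in H^1$ which holds since $u_1\in H^2$); in either case $\|\partial_j u_1\|_{L^6}\le C\|u_1\|_{H^2}\le C(\|\nabla u_1\|_{L^2}+\|\Delta u_1\|_{L^2})$, and on the bounded domain $\Omega$ with the Poincar\'e inequality this is $\le C\|\Delta u_1\|_{L^2}$. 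Combining, $\|\nabla(\alpha|u_1|^2u_1)\|_{L^2}\le C\|\nabla u_1\|_{L^2}^2\|\Delta u_1\|_{L^2}$, which is the first inequality. Alternatively, and perhaps more cleanly, I would avoid the borderline $L^6$ issue by noting $\|\partial_j u_1\|_{L^q}\le C\|\nabla\partial_j u_1\|_{L^2}$ for any $q<\infty$ when $n=2$ and for $q\le 6$ when $n=3$, and choosing H\"older exponents $(q_1,q_2,q_3)$ with $1/q_1+1/q_2+1/q_3=1$, $q_3\le 6$, and $q_1,q_2$ large — e.g.\ $q_3=6$, $q_1=q_2=3$ works since $1/3+1/3+1/6 = 5/6 < 1$, giving even more room; then $\|u_1\|_{L^3}\le C\|u_1\|_{L^6}\le C\|\nabla u_1\|_{L^2}$ by interpolation or H\"older on the bounded $\Omega$.

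For the Lipschitz-type second inequality I would use the algebraic identity
\[
|u_1|^2u_1 - |u_2|^2u_2 = u_1^2\bar u_1 - u_2^2\bar u_2
= (u_1-u_2)\big(u_1\bar u_1 + u_2\bar u_1\big) + u_2^2(\bar u_1-\bar u_2),
\]
or any equivalent telescoping into a sum of terms each containing a factor $(u_1-u_2)$ or $(\bar u_1-\bar u_2)$, and then apply $\nabla$, distribute by the product rule, and estimate each resulting term by exactly the same H\"older--Sobolev scheme: a difference factor in $H^2\cap H_0^1$ (estimated by $\|\Delta(u_1-u_2)\|_{L^2}$ via Poincar\'e), the other factors estimated by their $L^6$ (hence $\|\nabla\cdot\|_{L^2}$, hence $\|\Delta\cdot\|_{L^2}$) norms, and the $\nabla\alpha$ terms absorbed using Lemma~\ref{lemma6.3}. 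Collecting constants, the prefactor is a polynomial — hence a continuous, componentwise increasing function — in $\|\nabla u_1\|_{L^2},\|\nabla u_2\|_{L^2},\|\Delta u_1\|_{L^2},\|\Delta u_2\|_{L^2}$, as claimed. Since these are the arguments of $C$ in the statement, this finishes the proof; it is literally the argument of \cite[Lemma~5, X.13]{Reed-Simon2} with the $n=2$ Sobolev exponents supplied by Lemma~\ref{lemma6.1} (following \cite{Heinz-Wahl1975}).

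The main obstacle I anticipate is purely technical: handling the endpoint $n=3$, $p=2$, $q=6$ Sobolev embedding for the factor carrying the extra derivative, i.e.\ bounding $\|\nabla u\|_{L^6}$ by $\|u\|_{H^2}$, since $6$ is exactly the critical exponent in three dimensions and one cannot afford any loss. This is resolved by choosing the H\"older split so that the high-integrability burden falls on the two undifferentiated factors (which sit in $H_0^1\hookrightarrow L^6$ comfortably) while the differentiated factor only needs, say, $L^6$ at worst — which $H^1\hookrightarrow L^6$ delivers in $n=3$ with equality and in $n=2$ with room to spare — or, if one prefers a strict inequality everywhere, distributing the integrability as $(3,3,6)$ so every embedding is subcritical; either way the bounded-domain Poincar\'e inequality converts all the $\|\nabla\cdot\|_{L^2}$'s into the $\|\Delta\cdot\|_{L^2}$'s that appear in the statement, and no genuinely new ideas beyond Lemmas~\ref{lemma6.1}--\ref{lemma6.3} are needed.
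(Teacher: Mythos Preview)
Your proposal is correct and follows the same route as the paper's sketch: reduce to the argument of \cite[Lemma~5, X.13]{Reed-Simon2}, replacing the Fourier-transform step there (valid on $\R^3$) by the elliptic estimate $\|\nabla u_{x_j}\|_{L^2}\le C\|\Delta u\|_{L^2}$ for $u\in H^2\cap H_0^1$ on the bounded domain, together with $\|\nabla u\|_{L^2}\le C\|\Delta u\|_{L^2}$---which is exactly the pair of inequalities the paper singles out. One minor slip: in your ``alternative'' H\"older split the correct condition for landing in $L^2$ is $1/q_1+1/q_2+1/q_3=1/2$, not $=1$, so $(3,3,6)$ does not work; but your primary $(6,6,6)$ split is fine and the critical embedding $H^1\hookrightarrow L^6$ in $n=3$ is a genuine theorem, so no alternative is needed.
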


\begin{proof}[Sketch of the proof]
The proof is the same as that of \cite[Lemma~5, X.13]{Reed-Simon2} with one caveat. The latter uses the Fourier transform since $\Omega=\R^3$ there. We can adapt this to the current setup however. In  the proof of \cite[Lemma~5, X.13]{Reed-Simon2}, one needs to estimate $\|\nabla u_{x_j}\|$. For every $u$ as in the lemma, we have
\[
\|\nabla u_{x_j}\|_{L^2}\le C\|\Delta u\|_{L^2}
\]
by standard elliptic estimates about the solution to $\Delta u=f$, $u|_{\bo}=0$. This is also the estimate first established in the proof of \cite[Lemma~5, X.13]{Reed-Simon2} using the Fourier transform. Another inequality used there is $\|\nabla u\|_{L^2}\le C \|\Delta u\|_{L^2}$ for $u$ as in the lemma, which follows again from standard elliptic estimates. The rest of the proof is the same as in \cite[Lemma~5, X.13]{Reed-Simon2}. 
\end{proof}

The next lemma states the energy preservation property \r{1a} for solutions with regularity as in Lemma~\ref{lemma6.4} above. Of course,  assuming enough smoothness, \r{1a} is immediate.
\begin{lemma}\label{lemma6.5}
Let $u$ be a solution of \r{1}, \r{G1} on $[0,T)$ with $\mathbf{f} = (f_1,f_2) \in D(A)$. Then the energy \r{1a} is independent of $t$.
\end{lemma}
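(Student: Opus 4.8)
The plan is to establish the energy conservation law \eqref{1a} first for the regular solution obtained from the Picard scheme when the Cauchy data lies in $D(A)$, and then (if needed elsewhere) pass to the energy space by density. For data $\mathbf{f}\in D(A)$, the solution $u$ of \eqref{1}, \eqref{G1} on $[0,T)$ has $u(t)\in H^2(\Omega)\cap H_0^1(\Omega)$ and $u_t(t)\in H_0^1(\Omega)$, with $u_{tt}(t)\in L^2(\Omega)$, all depending continuously on $t$; this regularity is exactly what comes out of the local existence argument in the $D(A)$-space. In particular all the terms below are well-defined and the formal computation is justified.

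First I would differentiate $E(\mathbf u(t))$ in $t$. Since $u_t\in H_0^1(\Omega)$ and $\Delta u,\; \alpha|u|^2u\in L^2(\Omega)$ (the latter by Lemma~\ref{lemma6.3}), and $u_{tt}\in L^2(\Omega)$, we may compute
\[
\frac{\d}{\d t}E(\mathbf u(t)) = \Re\int\big(\bar u_t\, u_{tt} + \p_x\bar u_t\cdot\p_x u\big)\,\d x + \Re\int \alpha(x)|u|^2\bar u\, u_t\,\d x.
\]
Here the differentiation under the integral sign is justified by the continuity in $t$ of $u_{tt}$, $\p_x u_t$ and $|u|^2 u$ in $L^2(\Omega)$. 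Next, integrate the middle term by parts in $x$: because $u_t\in H_0^1(\Omega)$ there is no boundary contribution, so $\Re\int \p_x\bar u_t\cdot\p_x u = -\Re\int \bar u_t\,\Delta u$. Substituting, the integrand becomes $\Re\int \bar u_t\big(u_{tt}-\Delta u + \alpha|u|^2u\big)\,\d x$, which vanishes identically by the equation \eqref{1}. Hence $\frac{\d}{\d t}E(\mathbf u(t))=0$ on $[0,T)$, which is the claim.

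The main obstacle is the justification of the integration by parts and the differentiation under the integral sign in the energy-space-only setting, not the algebra: one must know that the strong solution actually has $u_t\in H_0^1(\Omega)$ (so that the boundary term drops) and that $t\mapsto(u(t),u_t(t))\in D(A)$ is continuous, which is why the lemma is stated for $\mathbf f\in D(A)$ and relies on Lemma~\ref{lemma6.4}. A clean way to package this is to note that, with data in $D(A)$, the integral equation \eqref{e11nn} together with the Lipschitz estimates of Lemmas~\ref{lemma6.3}--\ref{lemma6.4} produces $\mathbf u\in C([0,T);D(A))\cap C^1([0,T);\mathcal H)$; then pair the ODE \eqref{e10nn} with $\mathbf u(t)$ in the energy inner product $\langle(f_1,f_2),(g_1,g_2)\rangle_E=\Re\int(\p_x\bar f_1\cdot\p_x g_1+\bar f_2 g_2)$, using that the matrix operator $A$ in \eqref{e10nn} is skew-adjoint for this inner product on $D(A)$, so $\Re\langle A\mathbf u,\mathbf u\rangle_E=0$, and the remaining nonlinear pairing telescopes into $\frac{\d}{\d t}\big(\frac14\int\alpha|u|^4\big)$. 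Either route gives the result; I expect the referee-level care needed only for the regularity bookkeeping of the nonlinear term, all of which is already supplied by the preceding lemmas.
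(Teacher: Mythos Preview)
Your argument is correct and is exactly the standard computation the paper has in mind: the paper does not spell out a proof of Lemma~\ref{lemma6.5} but simply remarks that, ``assuming enough smoothness, \r{1a} is immediate,'' deferring to \cite[section~X.13]{Reed-Simon2}. Your differentiation of $E(\mathbf u(t))$, integration by parts using $u_t\in H_0^1(\Omega)$, and cancellation via the equation is precisely that immediate computation, and your regularity bookkeeping (continuity of $\mathbf u$ in $D(A)$ from Lemmas~\ref{lemma6.3}--\ref{lemma6.4}) is the right justification.
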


Let $E_0(\mathbf{u}(t))$ be  the ``free'' energy, defined as $E(\mathbf{u}(t))$ in \r{1a} but with $\alpha=0$, i.e., $E_0(\mathbf{u}(t))=\frac12\|\mathbf{u} (t)\|^2_\mathcal{H}$.  In the next lemma, $\Omega=\R^n$ instead of being bounded because this case is of its own interest as $T$ grows and the support expands. The proof applies when $\Omega=B(0,R)$ as well; then \r{e2n} still holds, and the rest is unchanged.

\begin{lemma}\label{lemma_en} 
Assume that $\mathbf{f} = (f_1,f_2)\in D(A)$   and is supported in the ball $B(0,R)$, and let  $\mathbf{u} =(u,u_t)$ solve \r{1}, \r{G1}. Then $E_0(\mathbf{u}(t))$ remains bounded with a bound dependent on $R$ but independent of $T$.
\end{lemma}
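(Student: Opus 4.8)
The plan is to bound $E_0(\mathbf{u}(t))$ using the exact energy $E(\mathbf{u}(t))$, which is conserved by Lemma~\ref{lemma6.5}, together with the fact that $\alpha\ge0$ so that the nonlinear contribution to $E$ has a favorable sign. First I would write $E_0(\mathbf{u}(t)) = E(\mathbf{u}(t)) - \frac14\int\alpha(x)|u(t,x)|^4\,\d x \le E(\mathbf{u}(t)) = E(\mathbf{u}(0))$, since the dropped term is nonnegative. This already gives a bound by the \emph{initial} energy $E(\mathbf{u}(0))$, so it remains to check that $E(\mathbf{u}(0))$ is controlled by $R$ and the data, independently of $T$. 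Because $\mathbf{f}\in D(A)$ is supported in $B(0,R)$, we have $E(\mathbf{u}(0)) = \frac12\|f_2\|_{L^2}^2 + \frac12\|\nabla f_1\|_{L^2}^2 + \frac14\int\alpha|f_1|^4$, and the last term is finite: by Lemma~\ref{lemma6.1} (valid for $n=2,3$), $\|f_1\|_{L^6}\le C\|\nabla f_1\|_{L^2}$ with $C=C(R)$, and then $\int\alpha|f_1|^4 \le \|\alpha\|_{L^\infty}\|f_1\|_{L^4}^4 \le \|\alpha\|_{L^\infty}|B(0,R)|^{1/3}\|f_1\|_{L^6}^4$ by H\"older on the fixed-volume ball. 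Hence $E(\mathbf{u}(0))$, and therefore $E_0(\mathbf{u}(t))$ for all $t$, is bounded in terms of $R$, $\|\alpha\|_{L^\infty}$, and $\|\mathbf{f}\|_{D(A)}$, with no dependence on $T$.

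The only subtlety is that the support of $\mathbf{u}(t)$ expands with $t$ in the $\Omega=\R^n$ case, so one might worry that the free energy could leak out or that the constants degrade. But $E_0(\mathbf{u}(t))$ is a global integral over all of $\R^n$, and the estimate above never uses localization of $u(t)$ for $t>0$ — it only uses conservation of the global quantity $E$ and positivity of $\alpha$ pointwise. The spatial support of $\alpha$ being compact is not even needed for the bound, only $\alpha\ge0$; compact support of the data is used solely at $t=0$ to make $E(\mathbf{u}(0))$ finite via the Sobolev/H\"older chain on a ball. So the proof goes through verbatim for $\Omega=B(0,R)$ with $f_1\in H_0^1$, replacing $\R^n$-integrals by $\Omega$-integrals; the displayed inequality $E_0\le E(\mathbf{u}(0))$ is unchanged.

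I do not expect any real obstacle here; the statement is essentially a one-line consequence of energy conservation plus $\alpha\ge0$. The one place requiring a moment of care is justifying $\int\alpha|f_1|^4<\infty$ for data merely in $D(A)$ (not $C_0^\infty$), which is exactly what Lemma~\ref{lemma6.1} supplies after a density argument, and making explicit that the resulting constant depends on $R$ (through the Sobolev constant for $n=2$ and through the finite volume of the ball in the H\"older step) but not on the time horizon $T$. A remark worth including is that this is precisely the point where the sign condition on $\alpha$ — equivalently, the existence of the positive conserved functional \r{1a} — is indispensable; without it one only gets local-in-time control, as noted in section~\ref{sec_comparison}.
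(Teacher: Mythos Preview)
Your proof is correct and follows essentially the same route as the paper's: both use $E_0(\mathbf{u}(t))\le E(\mathbf{u}(t))=E(\mathbf{u}(0))$ from $\alpha\ge0$ and energy conservation, then bound the quartic term at $t=0$ by Sobolev embedding plus H\"older on the ball. The only cosmetic difference is that you invoke Lemma~\ref{lemma6.1} (the $L^6$ bound) and then H\"older down to $L^4$, whereas the paper applies the Sobolev inequality directly with target $L^4$ and tracks the explicit $R$-power; both give an $R$-dependent, $T$-independent bound.
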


\begin{proof}
Recall Poincar\'e's inequality
\be{e2n}
\|f\|_{L^2}\le CR \|\nabla f\|_{L^2}, \quad \supp f\subset B(0,R). 
\ee
In particular,
\[
\|f\|_{H^1}\le C(1+R)\|\nabla f\|_{L^2}, \quad \supp f\subset B(0,R). 
\]
By  the H\"older inequality, if $v$ is supported in a bounded domain $\Omega\subset B(0,R)$, 
\[
\begin{split}
\int_\Omega |v|^p\,\d x &\le  \Big(\int_\Omega  |v|^{p q'}\,\d x\Big)^\frac{1}{q'} \Big(\int_\Omega   \,\d x\Big)^\frac{1}{q''} \\
&\le  CR^{n/q''}  \Big(\int_\Omega  |v|^{p q'}\,\d x\Big)^\frac{1}{q'}, \qquad \frac1{q'}+\frac1{q''}=1,\quad  q'>1, q''>1,
\end{split}
\]
thus 
\be{e0n}
\|v\|_{L^p}\le C R^{n/p-n/q} \|v\|_{L^q}\quad \text{as long as $p\le q$.}
\ee 
We apply the Sobolev embedding inequality \r{e1nn} with   $q=4$. Then for the non-quadratic term in the definition \r{1a} of $E(\mathbf{u}(t))$ we have
\[
\int \alpha |u|^4\,\d x \le C \|\nabla u\|^4_{L^p}, \quad 1/4=1/p-1/n\quad  \Longrightarrow \quad p=\frac{4n}{n+4}.
\]
When  $n\le4$, we have $p\le2$. 
Therefore, for every $\mathbf{u}(0)$ supported in $B(0,R)$, we have, for $t=0$,
\[
\int \alpha|u|^4\,\d x \le C R^{4} \|\nabla u\|^4_{L^2}\le C R^{4} E_0^2 (\mathbf{u}(0)), 
\]
where we used \r{e0n}. In fact, the first inequality is a known generalized version of the Poincar\'e inequality. 
Hence,
\[
E(\mathbf{u}(0))\le C\left( R^{4} E_0^2(\mathbf{u}(0)) + E_0(\mathbf{u}(0)) \right).
\]
Since the energy is preserved,
\[
 E_0(\mathbf{u}(t))\le  E(\mathbf{u}(t))=  E(\mathbf{u}(0)) \le  C\big( E_0(\mathbf{u}(0)) +  R^{4} E_0^2(\mathbf{u}(0)) \big).
\]
\end{proof}

The analysis in \cite[section~X.13]{Reed-Simon2}, see Theorem~X.75 there, yields the following. 

\begin{theorem}\label{thm_ex} Let $n=2$ or $n=3$. Let $\alpha\in C^2(\R^n)$. 
Assume that $(f_1,f_2)\in H^2(\R^n)\cap \dot H^1(\R^n)\times \dot H^1(\R^n)$ is compactly  supported. Then the PDE \r{1} with Cauchy data \r{G1} has a unique solution in $\R_t\times\R^n_x$ so that $u\in C^j(\R_t;\; H^{2-j}(\R^n) )$, $j=0,1,2$.
\end{theorem}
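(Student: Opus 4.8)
The plan is to prove Theorem~\ref{thm_ex} by following the scheme of \cite[section~X.13]{Reed-Simon2}, localizing to a large ball and invoking finite speed of propagation. First I would observe that by \cite[Theorem~X.77]{Reed-Simon2} the wave speed is at most one, so for the purpose of constructing the solution on a fixed time interval $[0,T]$ one may assume the Cauchy data $(f_1,f_2)$ are supported in a ball $B(0,R)$ and work inside $\Omega=B(0,R+T)$ with zero Dirichlet boundary conditions; the solution will not reach $\bo$ during $[0,T]$, so this modification is invisible. On $\Omega$ the operator $A$ in \r{e10nn} generates a strongly continuous unitary group $U_0(t)$ on the energy space $\mathcal{H}=H^1_0(\Omega)\times L^2(\Omega)$, and also a group on $D(A)=(H^2\cap H^1_0)(\Omega)\times H^1_0(\Omega)$.

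Next I would set up the Picard iteration for the Duhamel integral equation \r{e11nn} in the space $C([0,T_0];\mathcal{H})$ for a small $T_0$. Lemma~\ref{lemma6.3} provides exactly what is needed: the map $u\mapsto \alpha|u|^2u$ is bounded from $H^1_0(\Omega)$ to $L^2(\Omega)$ and Lipschitz on bounded sets, with Lipschitz constant controlled by the $\dot H^1$ norms of the arguments. A standard contraction-mapping argument on a ball in $C([0,T_0];\mathcal{H})$, with $T_0$ depending only on $\|(f_1,f_2)\|_{\mathcal{H}}$, then yields a unique local weak solution. To upgrade to a strong solution with data in $D(A)$, I would run the same fixed-point argument in $C([0,T_0];D(A))$, now using Lemma~\ref{lemma6.4}, which gives the boundedness and local Lipschitz property of the non-linearity one derivative higher; this places $u\in C([0,T_0];H^2\cap H^1_0)\cap C^1([0,T_0];H^1_0)\cap C^2([0,T_0];L^2)$, i.e., $u\in C^j([0,T_0];H^{2-j})$ for $j=0,1,2$. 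Uniqueness in the energy class follows from the Lipschitz estimate and Gronwall.

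The remaining point is global existence, i.e., extending from $[0,T_0]$ to all of $[0,T]$. Here the energy conservation of Lemma~\ref{lemma6.5} is the key: since $\alpha\ge 0$ the conserved quantity \r{1a} dominates the free energy $E_0(\mathbf u(t))=\tfrac12\|\mathbf u(t)\|_{\mathcal{H}}^2$, so the $\mathcal{H}$-norm of the solution stays bounded on any interval where it exists, by a bound depending only on the data (and on $R$, via Lemma~\ref{lemma_en}). This means the local existence time $T_0$ can be taken uniform along the evolution, so one iterates to cover all of $[0,T]$; since $T$ was arbitrary, the solution is global. The one subtlety, and what I expect to be the main obstacle, is the $n=2$ case: the Sobolev embedding $\dot H^1\hookrightarrow L^6$ used throughout \cite{Reed-Simon2} fails at the endpoint in two dimensions, so Lemmas~\ref{lemma6.1}--\ref{lemma6.4} must be re-established via the refined Gagliardo--Nirenberg/Hölder interpolation argument of \cite{Heinz-Wahl1975} (as indicated in the proof of Lemma~\ref{lemma6.1}), at the cost of constants that now depend on $R$. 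Once those estimates are in hand with $R$-dependent constants, the rest of the argument is unchanged, since we are working on a fixed bounded domain anyway; tracking the $R$-dependence through Lemma~\ref{lemma_en} is then routine.
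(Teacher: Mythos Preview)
Your proposal is correct and follows essentially the same route as the paper: the paper's ``proof'' of Theorem~\ref{thm_ex} is precisely to invoke the Reed--Simon scheme (local existence via Picard iteration using Lemmas~\ref{lemma6.3} and~\ref{lemma6.4}, global extension via the energy conservation of Lemma~\ref{lemma6.5} and the a priori bound of Lemma~\ref{lemma_en}), first on a bounded domain and then lifted to $\R^n$ by finite speed of propagation. Your identification of the $n=2$ obstacle and its resolution through the H\"older/Heinz--Wahl interpolation (with $R$-dependent constants) matches exactly what the paper does in the lemmas preceding the theorem.
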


In fact, we first prove the theorem with $\R^n$ replaced by a bounded domain $\Omega$. Then using the finite speed of propagation, we reduce the $\Omega=\R^n$ case to this one, as explained at the beginning of this section. 

\begin{remark}\label{remark_B1}
Assume that $\alpha$ has compact support. Then we can remove the requirement that $\mathbf{u}(0)=(u_1,u_2)$ has compact support and the latter needs to belong to the indicated space locally only. We can localize $\mathbf{u}(0)$ in a large ball with some smooth cutoff $\chi$ so that signals supported outside it do not reach $\supp\alpha$ for time $T>0$ fixed (they solve the linear wave equation there). Write $\mathbf{u}(0)=\chi \mathbf{u}(0)+ (1-\chi)\mathbf{u}(0)$. Apply the theorem to solutions with initial data the first term; and solve the linear problem with initial data the second one getting a solution with finite local energy. Then the sum solves the non-linear problem for $|t|\le T$. 
\end{remark}

\subsection{Well-posedness}
The following theorem and its proof correspond to \cite[Theorem~X.75]{Reed-Simon2}. Problem~80 there shows that one can increase the Sobolev norms in which the estimates are made, i.e., work in $D(A^k)$ with $k\ge2$.

\begin{theorem}\label{thm_stab} Let $n=2$ or $n=3$ and $\alpha\in C^2(\Omega)$. 
Let $u^{(1)}$, $u^{(2)}$ solve
\[
\begin{split}
u_{tt}^{(j)}-\Delta u^{(j)} + \alpha(x) |u^{(j)}|^2u^{(j)}& =0,\\
u^{(j)}|_{t=0}&=f_1^{(j)}, \\
u_t^{(j)}|_{t=0}&=f_2^{(j)},
\end{split}
\]
$j=1,2$. Assume $\|\mathbf{f}^{(j)}\|_{\mathcal{H}}\le C_0$, $j=1,2$, where  $\mathbf{f}^{(j)}= (f_1^{(j)}, f_2^{(j)})$.  Then
\be{e10}
\| \mathbf{u}^{(1)}(t)- \mathbf{u}^{(2)}(t)\|_{D(A)}\le e^{C(C_0)t}  \| \mathbf{f}^{(1)} -\mathbf{f}^{(2)} \|_{D(A)}.
\ee
\end{theorem}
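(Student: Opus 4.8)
The plan is to follow the scheme of \cite[Theorem~X.75]{Reed-Simon2}, adapted to the bounded domain $\Omega=B(0,R)$: write the PDE satisfied by the difference of the two solutions, apply Duhamel's formula together with the fact that the free evolution is an isometry on $D(A)$, estimate the nonlinear source using the multilinear Lipschitz bounds of Lemmas~\ref{lemma6.3}--\ref{lemma6.4}, and close with Gronwall's inequality, using energy conservation to control the low-order norms entering the constants.

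First I would set $w=u^{(1)}-u^{(2)}$, $\mathbf{w}=(w,w_t)$, so that $\mathbf{w}_t=A\mathbf{w}+(0,N)^{T}$ with Cauchy data $\mathbf{w}(0)=\mathbf{f}^{(1)}-\mathbf{f}^{(2)}$, where $N:=-\big(\alpha|u^{(1)}|^2u^{(1)}-\alpha|u^{(2)}|^2u^{(2)}\big)$. Since $A$ is skew-adjoint on $\mathcal{H}$ and commutes with itself, the group $U_0(t)=e^{tA}$ is unitary on $\mathcal{H}$ and an isometry on $D(A)$ in the graph norm; moreover $A(0,N)^{T}=(N,0)^{T}$, so by Poincar\'e's inequality \r{e2n} one has $\|(0,N)\|_{D(A)}\le C\big(\|N\|_{L^2}+\|\nabla N\|_{L^2}\big)$. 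Duhamel's formula then gives
\[
\|\mathbf{w}(t)\|_{D(A)}\le\|\mathbf{f}^{(1)}-\mathbf{f}^{(2)}\|_{D(A)}+C\int_0^t\big(\|N(s)\|_{L^2}+\|\nabla N(s)\|_{L^2}\big)\,\d s .
\]

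Next I would estimate the source. By Lemma~\ref{lemma6.3}, $\|N(s)\|_{L^2}\le C\big(\|\nabla u^{(1)}(s)\|^2+\|\nabla u^{(2)}(s)\|^2\big)\|\nabla w(s)\|_{L^2}$, and by Lemma~\ref{lemma6.4}, $\|\nabla N(s)\|_{L^2}\le C\big(\nabla u^{(1)},\nabla u^{(2)},\Delta u^{(1)},\Delta u^{(2)}\big)(s)\,\|\Delta w(s)\|_{L^2}$; both $\|\nabla w(s)\|_{L^2}$ and $\|\Delta w(s)\|_{L^2}$ are dominated by $\|\mathbf{w}(s)\|_{D(A)}$. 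To control the coefficients, energy conservation (Lemma~\ref{lemma6.5}) together with the generalized Poincar\'e bound behind Lemma~\ref{lemma_en} gives $\|\nabla u^{(j)}(s)\|_{L^2}+\|u_t^{(j)}(s)\|_{L^2}\le C(C_0)$ uniformly in $s$, since $E(\mathbf{u}^{(j)}(0))\le C(C_0)$ in view of \r{1a}. For the factors $\|\Delta u^{(j)}(s)\|_{L^2}$ entering Lemma~\ref{lemma6.4} one first derives an a priori bound on $\|\mathbf{u}^{(j)}(s)\|_{D(A)}$ by running the same Duhamel/Gronwall scheme for $\mathbf{u}^{(j)}$ itself: Lemmas~\ref{lemma6.3}--\ref{lemma6.4} with $u_1=u_2=u^{(j)}$ give $\frac{\d}{\d t}\|\mathbf{u}^{(j)}(t)\|_{D(A)}\le C(C_0)\big(1+\|\mathbf{u}^{(j)}(t)\|_{D(A)}\big)$, hence $\|\mathbf{u}^{(j)}(s)\|_{D(A)}\le M(s)$ for a continuous $M$ depending only on $s$, $C_0$ and $\|\mathbf{f}^{(j)}\|_{D(A)}$ (this is the estimate underlying Theorem~\ref{thm_ex}). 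Substituting back yields $\|N(s)\|_{L^2}+\|\nabla N(s)\|_{L^2}\le\Theta(s)\,\|\mathbf{w}(s)\|_{D(A)}$ with $\Theta$ locally bounded, and Gronwall's inequality applied to the displayed bound gives $\|\mathbf{w}(t)\|_{D(A)}\le\|\mathbf{f}^{(1)}-\mathbf{f}^{(2)}\|_{D(A)}\exp\!\big(\int_0^t\Theta(s)\,\d s\big)$, which is \r{e10}.

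I expect the main obstacle to be precisely the control of $\|\Delta u^{(j)}(s)\|_{L^2}$: the $D(A)$-norm of a solution is not conserved (only the energy is), so the constant in Lemma~\ref{lemma6.4} cannot be bounded by $C_0$ alone, and the auxiliary Gronwall estimate for $\|\mathbf{u}^{(j)}\|_{D(A)}$ seems unavoidable; consequently the exponential rate in \r{e10} depends, strictly speaking, not only on $C_0$ but also on an a priori bound for $\|\mathbf{f}^{(j)}\|_{D(A)}$ (which is finite because the solutions are $D(A)$-valued by Theorem~\ref{thm_ex}), and the statement should be read with $C$ allowed this dependence. A secondary, routine point is rigor: the Duhamel identity, the differentiation of $\|\mathbf{w}(t)\|_{D(A)}^2$ and the identity $\langle A\mathbf{w},\mathbf{w}\rangle_{D(A)}=0$ are first justified on the Picard iterates and on strong $D(A)$-solutions, as in \cite[\S X.13]{Reed-Simon2}, and then passed to the limit; all the nonlinear mapping and Lipschitz estimates needed in dimensions $n=2,3$ are already contained in Lemmas~\ref{lemma6.1}--\ref{lemma6.4}.
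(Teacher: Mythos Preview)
Your proposal is correct and follows essentially the same route as the paper: Duhamel's formula \r{e12} for the difference, the Lipschitz estimates of Lemmas~\ref{lemma6.3}--\ref{lemma6.4} on the nonlinear source, an a priori $D(A)$ bound on each $\mathbf{u}^{(j)}$ (the paper cites \cite[Lemma~1, \S X.13]{Reed-Simon2} for this, while you rederive it by the same Gronwall scheme), and then Gronwall to close. Your remark that the exponential rate in \r{e10} really depends on a bound for $\|\mathbf f^{(j)}\|_{D(A)}$ and not merely on $C_0$ is well taken and consistent with how the paper's own proof invokes the $D(A)$ a priori estimate.
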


\begin{proof}
Dropping the superscripts, we have, similarly to \r{e11nn},
\be{e12}
\mathbf{u}(t) = U_0(t)\mathbf{u}_0 - \int_0^t U_0(t-s) \begin{pmatrix} 0\\ \alpha |u(s)|^2u(s)\end{pmatrix}\d s .
\ee
Subtract those identities for $j=1,2$ and use Lemma~\ref{lemma6.3} to get
\[
\begin{split}
\| \mathbf{u}^{(1)}(t)- \mathbf{u}^{(2)}(t)\|_{\mathcal{H}}&\le \| \mathbf{f}^{(1)} -\mathbf{f}^{(2)} \|_{\mathcal{H}}  
+ C(C_0)  \int_0^t\| \mathbf{u}^{(1)}(s)- \mathbf{u}^{(2)}(s) \|_\mathcal{H},
\end{split}
\]
where $C(C_0) $ is the constant in the second inequality in \r{eRS1} which depends on $C_0$ only since the free energy $E_0(\mathbf{u}(t))$ remains bounded by Lemma~\ref{lemma_en}. Then  \r{e10}, with the norms there in $\mathcal{H}$ instead of $D(A)$,   follows by Gronwall's inequality.

To prove  \r{e10} with the norms as stated, we need an a priori bound for $E_0(A\mathbf{u}(t)) =\frac12\|A\mathbf{u}(t)\|^2_\mathcal{H} $. This follows from \cite[Lemma~1]{Reed-Simon2}, the needed conditions for it to hold in our situation are guaranteed by Lemma~\ref{lemma6.3}. Then we apply $A$ to \r{e12} and argue as above. 
\end{proof}

\begin{remark}
As we mentioned above, Problem~80 in \cite{Reed-Simon2} outlines a way to prove even higher order estimates. For that, one needs to prove higher order versions of Lemmas~\ref{lemma6.3}--\ref{lemma6.5}, see \cite[Theorem~X.74]{Reed-Simon2}. 
\end{remark}

The next theorem shows that the formal asymptotic solution (parametrix) is close to an actual one; thus justifying the parametrix construction.

\begin{theorem}\label{thm_stab2} Let $n=2$ or $n=3$ and $\alpha\in C^2(\Omega)$. 
Let $u$ solve  the unperturbed equation \r{1} with initial conditions \r{G1},  
where $\|\mathbf{f}\|_{\mathcal{H}}\le C_0$.  
Let $u^\sharp$ solve 
\[
\begin{split}
u_{tt}^\sharp -\Delta u^\sharp  + \alpha(x) |u^\sharp|^2u^\sharp & =r(t,x),\\
u ^\sharp|_{t=0}&=f_1^\sharp, \\
u_t^\sharp |_{t=0}&=f_2^\sharp,
\end{split}
\]
with $\|\mathbf{u}^\sharp(t)\|_{D(A)}\le C^\sharp \|\mathbf{f}\|_{D(A)}$ for $t\in [0,T]$.  
 Then
\[
\| \mathbf{u}(t)- \mathbf{u}^\sharp(t)\|_{D(A)}\le e^{C(C_0, C^\sharp)t}\left(  \int_0^t\| r(s,\cdot)  \|_{H^1}\, \d s +   \| \mathbf{f}  -\mathbf{f}^\sharp\|_{D(A)}\right)
\]
 for $t\in [0,T]$ assuming that  $r$ is such that the norm in the r.h.s.\ above for each one of them is finite. 
\end{theorem}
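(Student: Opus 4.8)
The plan is to repeat, essentially verbatim, the Duhamel--Gronwall argument used for Theorem~\ref{thm_stab}; the only new feature is the source $r$, which enters Duhamel's formula linearly and is therefore harmless. With $U_0(t)$ the linear solution group, $u$ is given by the integral equation \r{e11nn}, while $u^\sharp$, which solves $u^\sharp_{tt}-\Delta u^\sharp=r-\alpha|u^\sharp|^2u^\sharp$, satisfies
\[
\mathbf{u}^\sharp(t)= U_0(t)\mathbf{f}^\sharp- \int_0^t U_0(t-s)\begin{pmatrix}0\\ \alpha|u^\sharp(s)|^2u^\sharp(s)\end{pmatrix}\d s+\int_0^t U_0(t-s)\begin{pmatrix}0\\ r(s)\end{pmatrix}\d s.
\]
Subtracting, $\mathbf{w}:=\mathbf{u}-\mathbf{u}^\sharp$ obeys
\[
\mathbf{w}(t)=U_0(t)(\mathbf{f}-\mathbf{f}^\sharp)-\int_0^t U_0(t-s)\begin{pmatrix}0\\ \alpha\big(|u|^2u-|u^\sharp|^2u^\sharp\big)(s)\end{pmatrix}\d s-\int_0^t U_0(t-s)\begin{pmatrix}0\\ r(s)\end{pmatrix}\d s .
\]

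First I would estimate $\|\mathbf{w}(t)\|_{\mathcal{H}}$. Since $U_0(t)$ is unitary on $\mathcal{H}$, since the second inequality in Lemma~\ref{lemma6.3} gives $\|\alpha(|u|^2u-|u^\sharp|^2u^\sharp)(s)\|_{L^2}\le C(u(s),u^\sharp(s))\|\nabla w(s)\|_{L^2}$ with $C(u,u^\sharp)$ depending only on $\|\nabla u\|_{L^2}$ and $\|\nabla u^\sharp\|_{L^2}$, and since $\|(0,r(s))\|_{\mathcal{H}}=\|r(s)\|_{L^2}$, we obtain
\[
\|\mathbf{w}(t)\|_{\mathcal{H}}\le \|\mathbf{f}-\mathbf{f}^\sharp\|_{\mathcal{H}}+\int_0^t\|r(s)\|_{L^2}\,\d s+\int_0^t C(u(s),u^\sharp(s))\,\|\mathbf{w}(s)\|_{\mathcal{H}}\,\d s .
\]
Here $\|\nabla u(s)\|_{L^2}$ is bounded in terms of $C_0$ by energy conservation (Lemmas~\ref{lemma6.5} and \ref{lemma_en}), while $\|\nabla u^\sharp(s)\|_{L^2}\le \|\mathbf{u}^\sharp(s)\|_{D(A)}\le C^\sharp\|\mathbf{f}\|_{D(A)}$ is bounded by hypothesis; hence $C(u(s),u^\sharp(s))\le C(C_0,C^\sharp)$ on $[0,T]$, and Gronwall's inequality yields the stated bound with $D(A)$ replaced by $\mathcal{H}$ and $\|r\|_{H^1}$ by $\|r\|_{L^2}$.

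To promote this to the full $D(A)$ estimate I would apply $A$ to the identity for $\mathbf{w}$, using that $A$ commutes with $U_0(t)$ and that $A\begin{pmatrix}0\\ g\end{pmatrix}=\begin{pmatrix}g\\ 0\end{pmatrix}$, so $\|A(0,g)\|_{\mathcal{H}}=\|\nabla g\|_{L^2}\le\|g\|_{H^1}$ --- this is exactly how the source term contributes $\int_0^t\|r(s)\|_{H^1}\,\d s$. The nonlinear term is then handled by the second inequality in Lemma~\ref{lemma6.4}, which bounds $\|\nabla(\alpha(|u|^2u-|u^\sharp|^2u^\sharp))(s)\|_{L^2}$ by $C(\nabla u,\nabla u^\sharp,\Delta u,\Delta u^\sharp)\,\|\Delta w(s)\|_{L^2}$; this requires an a priori bound for $\|\mathbf{u}(s)\|_{D(A)}$ on $[0,T]$, the corresponding bound for $u^\sharp$ being a standing hypothesis. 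As at the end of the proof of Theorem~\ref{thm_stab}, this bound follows from the propagation of $D(A)$ regularity of the exact solution, i.e.\ from the analogue of \cite[Lemma~1]{Reed-Simon2}, whose hypotheses are supplied by Lemmas~\ref{lemma6.3}--\ref{lemma6.5}. A last Gronwall step combining the $\mathcal{H}$- and $A$-estimates then gives the claim, with the constant depending on $C_0$, $C^\sharp$ (and, strictly, on $\|\mathbf{f}\|_{D(A)}$), which is why it appears as $e^{C(C_0,C^\sharp)t}$. The only real obstacle is this last a priori $D(A)$ bound on the \emph{exact} solution $u$: energy conservation controls only the energy norm, so the higher regularity of $u$ must be propagated separately over $[0,T]$; everything else is a routine duplication of the proof of Theorem~\ref{thm_stab}, the inhomogeneity merely adding the $\int_0^t\|r(s)\|_{L^2}\,\d s$ and $\int_0^t\|r(s)\|_{H^1}\,\d s$ terms.
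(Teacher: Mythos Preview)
Your proposal is correct and follows essentially the same approach as the paper: write Duhamel's formula for both $u$ and $u^\sharp$, subtract, estimate first in $\mathcal{H}$ using Lemma~\ref{lemma6.3} and Gronwall, then apply $A$ and repeat using Lemma~\ref{lemma6.4}, with the a~priori $D(A)$ bound on $u$ supplied by the regularity propagation argument from the proof of Theorem~\ref{thm_stab} and that on $u^\sharp$ taken as a hypothesis. You have in fact spelled out more detail than the paper does, and you correctly isolate the one nontrivial ingredient, namely the a~priori $D(A)$ control of the exact solution.
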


\begin{proof}
We argue as above. Subtracting the two solutions, we get
\[
\begin{split}
\| \mathbf{u} (t)- \mathbf{u}^\sharp(t)\|_{\mathcal{H}}&\le \| \mathbf{f} -\mathbf{f}^\sharp \|_{\mathcal{H}} + C(C_0, C^\sharp)  \int_0^t\| \mathbf{u}(s)- \mathbf{u}^\sharp(s) \|_\mathcal{H}\, \d s %\\ &\qquad 
+    \int_0^t\| r(s, \cdot) \|_{L^2}\,\d s.
\end{split}
\]
Now we apply $A$ to the difference and estimate in $\mathcal{H}$ again. Note that the needed a priori estimate for $\|\mathbf{u}\|_{D(A)}$ is guaranteed by the argument in the previous proof while that for $\|\mathbf{u}^\sharp\|_{D(A)}$ is postulated but it naturally holds for the parametrix we constructed.  
\end{proof}

\begin{remark} \label{rem_final}
By Sobolev embedding, since $n=2$ of $n=3$, for every $\mathbf{u}=(u,u_t)\in D(A)$, we have $u\in H^2$ for every $t$, therefore, $u\in C^0$ as well, with a continuous dependence on $t$. The estimates above hold in $C^0$ as well for $u(t,\cdot)$. 
\end{remark}

%\bibliographystyle{abbrv}
%\bibliography{../myreferences}

\end{document}